\newcommand{\K}{\ensuremath{\mathbb{K}}}
\newtheorem{thm}{Theorem}[section]
\newtheorem{cor}[thm]{Corollary}
\newtheorem{lemma}[thm]{Lemma}
\newtheorem{note}[thm]{Note}
\newtheorem{definition}[thm]{Definition}
\newtheorem{prob}[thm]{Problem}
\renewenvironment{proof}{\medskip\noindent{\emph {Proof:}\ }}{\qed\medskip}
\begin{document}

\title{ \bf Two commuting operators associated\\ with a tridiagonal pair}
\author{Sarah Bockting-Conrad
}
\date{}
\maketitle

\begin{abstract}
Let $\K$ denote a field and let $V$ denote a vector space over $\K$ with finite positive dimension.  We consider an ordered pair of linear transformations $A: V\to V$ and $A^*: V \to V$ that satisfy the following four conditions:  
(i) Each of $A,A^*$ is diagonalizable;
(ii) there exists an ordering $\{ V_i\}_{i=0}^d$ of the eigenspaces of $A$ such that 
$A^* V_i \subseteq V_{i-1}+V_i+V_{i+1}
$ for $0\leq i\leq d$, where $V_{-1}=0$ and $V_{d+1}=0$;
(iii) there exists an ordering $\{V^*_i\}_{i=0}^{\delta}$ of the eigenspaces of $A^*$ such that 
$A V^*_i \subseteq V^*_{i-1}+V^*_i+V^*_{i+1}
$ for $0\leq i\leq \delta$, where $V^*_{-1}=0$ and $V^*_{\delta+1}=0$;
(iv) there does not exist a subspace $W$ of $V$ such that $AW \subseteq W$, $A^*W\subseteq W$, $W\neq 0$, $W\neq V$.  
We call such a pair a {\it tridiagonal pair} on $V$.  
It is known that $d=\delta$; to avoid trivialities assume $d\geq 1$.  
We show that there exists a unique linear transformation $\Delta:V\to V$ such that
$(\Delta -I)V^*_i\subseteq V_0^{*}+V_1^{*}+\cdots +V_{i-1}^{*}$
and
$\Delta (V_i+V_{i+1}+\cdots+V_d )= V_0 +V_{1}+\cdots+V_{d-i}$
for $0\leq i \leq d$.
We show that there exists a unique linear transformation $\Psi:V\to V$ such that 
$\Psi V_i\subseteq V_{i-1}+V_i+V_{i+1}$ 
and 
$\left(\Psi-\Lambda \right)V_i^*\subseteq V_0^*+V_1^*+\cdots + V_{i-2}^*$  for $0\leq i\leq d$,
where $\Lambda=(\Delta-I)(\theta_0-\theta_d)^{-1}$ and $\theta_0$ (resp. $\theta_d$) denotes the eigenvalue of $A$ associated with $V_0$ (resp. $V_d$).
 We characterize $\Delta,\Psi$ in several ways.  
There are two well-known decompositions of 
$V$ called the first and second split decomposition.  We discuss how $\Delta,\Psi$ act on these decompositions.
We also show how $\Delta,\Psi$ relate to each other.
Along this line we have two main results.
Our first main result is that $\Delta, \Psi$ commute.  
In the literature on TD pairs, there is a scalar $\beta$ used to describe the eigenvalues.  
Our second main result is that each of $\Delta^{\pm 1}$ is a polynomial of degree $d$ in $\Psi$, under a minor assumption on $\beta$.

\bigskip
\noindent
{\bf Keywords}. 
Tridiagonal pair, Leonard pair, $q$-Serre relations.
 \hfil\break
\noindent {\bf 2010 Mathematics Subject Classification}. 
Primary: 15A21.  Secondary: 05E30.
\end{abstract}

\section{Introduction}\label{section:intro}

Throughout this paper, $\K$ denotes a field and $\overline{\K}$ denotes the algebraic closure of $\K$.\\

We begin by recalling the notion of a tridiagonal pair.
We will use the following terms.  Let $V$ denote a vector space over $\K$ with finite positive dimension.  For a linear transformation $A:V\to V$ and a subspace $W\subseteq V$, we say that $W$ is an {\it eigenspace} of $A$ whenever $W\neq 0$ and there exists $\theta\in\K$ such that $W=\{ v\in V | Av=\theta v\}$.  In this case, $\theta$ is called the {\it eigenvalue} of $A$ associated with $W$.  We say that $A$ is {\it diagonalizable} whenever $V$ is spanned by the eigenspaces of $A$.

\begin{definition}  {\rm \cite[Definition 1.1]{Somealg}} \label{def:tdp}
{\rm Let $V$ denote a vector space over $\K$ with finite positive dimension. 
By a {\em tridiagonal pair} (or {\em TD pair}) on $V$ we mean an ordered pair of linear 
transformations $A:V \to V$ and $A^*:V \to V$ that satisfy the following 
four conditions.
\begin{enumerate}
\item[{\rm (i)}] 
Each of $A,A^*$ is diagonalizable.
\item[{\rm (ii)}] 
There exists an ordering $\{V_i\}_{i=0}^d$ of the eigenspaces of $A$ 
such that 
\begin{equation}               \label{eq:t1}
A^* V_i \subseteq V_{i-1} + V_i+ V_{i+1} \qquad \qquad (0 \leq i \leq d),
\end{equation}
where $V_{-1} = 0$ and $V_{d+1}= 0$.
\item[{\rm (iii)}]
There exists an ordering $\{V^*_i\}_{i=0}^{\delta}$ of the eigenspaces of 
$A^*$ such that 
\begin{equation}                \label{eq:t2}
A V^*_i \subseteq V^*_{i-1} + V^*_i+ V^*_{i+1} 
\qquad \qquad (0 \leq i \leq \delta),
\end{equation}
where $V^*_{-1} = 0$ and $V^*_{\delta+1}= 0$.
\item [{\rm (iv)}]
There does not exist a subspace $W$ of $V$ such  that $AW\subseteq W$,
$A^*W\subseteq W$, $W\not=0$, $W\not=V$.
\end{enumerate}
We say the pair $A,A^*$ is {\em over $\K$}.}
\end{definition}
\medskip

\begin{note}   \label{note:star}        \samepage
{\rm According to a common notational convention $A^*$ denotes 
the conjugate-transpose of $A$. We are not using this convention.
In a TD pair $A,A^*$ the linear transformations $A$ and $A^*$
are arbitrary subject to (i)--(iv) above.}
\end{note}

\medskip

Referring to the TD pair in Definition \ref{def:tdp}, by  \cite[Lemma 4.5]{Somealg} 
the scalars 
 $d$ and $\delta $ are equal.
We call this common value the {\it diameter} of $A,A^*$.
To avoid trivialities, throughout this paper we assume that the diameter is at least one.\\

We now give some background on TD pairs.  The concept of a TD pair originated in the theory of $Q$-polynomial distance-regular graphs \cite{subconstituent}.  Since that beginning the TD pairs have been investigated in a systematic way; for notable papers along this line see 
\cite{TDclassification, Somealg, DP, shape, td-uqsl2, cubicqserre, qtetalgebra, augTDalg, IT:qRacah,  sharpen, LPintro, 2LT}.
Several of these papers focus on a class of TD pair said to be sharp.  These papers ultimately led to the classification of sharp TD pairs \cite{TDclassification}.  In spite of this classification, there are still some intriguing aspects of TD pairs which have not yet been fully studied.  In this paper, we investigate one of those aspects.\\

We now summarize the present paper.  
Given a TD pair $A,A^*$ on $V$ we introduce 
two linear transformations $\Delta:V\to V$ and $\Psi:V\to V$ that we find attractive.
We characterize $\Delta,\Psi$ in several ways.  
There are two well-known decompositions of 
$V$ called the first and second split decomposition \cite[Section 4]{Somealg}.  We discuss how $\Delta,\Psi$ act on these decompositions.
We also show how $\Delta,\Psi$ relate to each other.\\

We now describe $\Delta,\Psi$ in more detail.  
For the rest of this section, fix an ordering $\{V_i\}_{i=0}^d$ (resp. $\{V_i^*\}_{i=0}^d$) of the eigenspaces of $A$ (resp. $A^*$) which satisfies (\ref{eq:t1}) (resp. (\ref{eq:t2})).  For $0\leq i\leq d$ let $\theta_i$ (resp. $\theta_i^*$) denote the eigenvalue of $A$ (resp. $A^*$) corresponding to $V_i$ (resp. $V_i^*$).  
We show that there exists a unique linear transformation $\Delta:V\to V$ such that both 
\begin{align*}
&(\Delta -I)V^*_i\subseteq V_0^{*}+V_1^{*}+\cdots +V_{i-1}^{*},\\
&\Delta (V_i+V_{i+1}+\cdots+V_d )= V_0 +V_{1}+\cdots+V_{d-i}
\end{align*}
for $0\leq i \leq d$.
We show that there exists a unique linear transformation $\Psi:V\to V$ such that both 
\begin{align*}
&\Psi V_i\subseteq V_{i-1}+V_i+V_{i+1},\\
&\left(\Psi-\frac{\Delta-I}{\theta_0-\theta_d}\right)V_i^*\subseteq V_0^*+V_1^*+\cdots + V_{i-2}^*
\end{align*}
for $0\leq i\leq d$.  
By construction, 
\begin{equation*}
\Psi V^*_i\subseteq V_0^{*}+V_1^{*}+\cdots +V_{i-1}^{*} \qquad\qquad (0\leq i\leq d).
\end{equation*}
\medskip

Before discussing $\Delta$ and $\Psi$ further, 
we recall the split decompositions of $V$.  
For $0\leq i\leq d$ define
\begin{align*}
U_i&= (V^*_0+V^*_1+\cdots + V^*_i)\cap (V_i+V_{i+1}+\cdots + V_d),\\
U_i^{\Downarrow} &= (V^*_0+V^*_1+\cdots + V^*_i)\cap (V_0+V_1+\cdots + V_{d-i}).
\end{align*}
By \cite[Theorem 4.6]{Somealg},
both the sums $V=\sum_{i=0}^d U_i$ and $V=\sum_{i=0}^d U_i^{\Downarrow}$ are direct.  We call $\{U_i\}_{i=0}^d$ (resp. $\{U_i^{\Downarrow}\}_{i=0}^d$) the {\it first split decomposition} (resp. {\it second split decomposition}) of $V$.
By \cite[Theorem 4.6]{Somealg}, the maps $A,A^*$ act on the split decompositions in the following way:
\begin{align*}
&(A-\theta_i I)U_i\subseteq U_{i+1} &(0\leq i \leq d-1 ), \qquad &(A-\theta_d I)U_d=0,\\
&(A^*-\theta_i^* I)U_i\subseteq U_{i-1} &(1\leq i\leq d ), \qquad &(A^*-\theta_0^* I)U_0=0
\end{align*}
and
\begin{align*}&(A-\theta_{d-i} I)U_i^{\Downarrow}\subseteq U_{i+1}^{\Downarrow}&(0\leq i\leq d-1 ), \qquad &(A-\theta_0 I)U_d^{\Downarrow}=0,\\
&(A^*-\theta_i^* I)U_i^{\Downarrow}\subseteq U_{i-1}^{\Downarrow}&(1\leq i\leq d ), \qquad &(A^*-\theta_0^* I)U_0^{\Downarrow}=0.
\end{align*}
\medskip

We now summarize how $\Delta,\Psi$ act on the split decompositions of $V$.
We show that
\begin{align*}
&\Delta U_i=U_i^{\Downarrow},\\
&(\Delta-I)U_i\subseteq U_0+U_1+\cdots + U_{i-1},\\
&(\Delta-I)U_i^{\Downarrow}\subseteq U_0^{\Downarrow}+U_1^{\Downarrow}+\cdots + U_{i-1}^{\Downarrow}
\end{align*}
for $0\leq i\leq d$.
We also show that 
\begin{align*}
&\Psi U_i\subseteq U_{i-1} &(1\leq i\leq d), \qquad & \Psi U_0=0,\\
&\Psi U_i^{\Downarrow} \subseteq U_{i-1}^{\Downarrow} &(1\leq i\leq d), \qquad & \Psi U_0^{\Downarrow}=0.
\end{align*}
\medskip

We now discuss how $\Delta,\Psi$ relate to each other.  Along this line we have two main results.
Our first main result is that $\Delta, \Psi$ commute.  In order to state the second result, we define
\begin{align*}
\vartheta_i=\sum_{h=0}^{i-1}\frac{\theta_h-\theta_{d-h}}{\theta_0-\theta_d}\qquad\qquad (1\leq i\leq d).
\end{align*}
Our second main result is that both
\begin{eqnarray*}
\Delta = I +\frac{\eta_{1}(\theta_0)}{\vartheta_1}\Psi + \frac{\eta_{2}(\theta_0)}{\vartheta_1\vartheta_2}\Psi^2 +\cdots + \frac{\eta_{d}(\theta_0)}{\vartheta_1\vartheta_2 \cdots \vartheta_d} \Psi^d,\\
\Delta^{-1} = I +\frac{\tau_{1}(\theta_d)}{\vartheta_1}\Psi + \frac{\tau_{2}(\theta_d)}{\vartheta_1\vartheta_2}\Psi^2  +\cdots+ \frac{\tau_{d}(\theta_d)}{\vartheta_1\vartheta_2 \cdots \vartheta_d} \Psi^d
\end{eqnarray*}
provided that each of $\vartheta_1,\vartheta_2,\mathellipsis,\vartheta_d$ is nonzero.
Here $\tau_i, \eta_i$ are the polynomials
\begin{align*} 
\tau_{i}&=(x-\theta_0)(x-\theta_{1})\cdot\cdot\cdot (x-\theta_{i-1}),\\
\eta_{i}&=(x-\theta_{d})(x-\theta_{d-1})\cdot\cdot\cdot (x-\theta_{d-i+1})
\end{align*}
for $0\leq i\leq d$.
In the literature on TD pairs there is a scalar $\beta$ that is used to describe the eigenvalues of $A$ and $A^*$ \cite[Sections 10 and 11]{Somealg}. 
We show that each of $\vartheta_1,\vartheta_2,\ldots,\vartheta_d$ is nonzero if and only if neither of the following holds: (i) $\beta=-2$, $d$ is odd, and ${\rm Char}(\K)\neq 2$; (ii) $\beta=0$, $d=3$, and ${\rm Char}(\K)=2$.
We conclude the paper with a few comments on further research.\\

\section{Preliminaries}\label{section:prelim}

When working with a tridiagonal pair, it is useful to consider a closely related object called a tridiagonal system. In order to define this, we first recall some facts from elementary linear algebra.\\

Let $V$ denote a vector space over $\K$ with finite positive dimension.
Let $ {\rm End} (V)$ denote the $\K$-algebra consisting of all linear transformations from $V$ to $V$.
Let $A$ denote a diagonalizable element in ${\rm End}(V)$.  
Let $\{V_i\}_{i=0}^d$ denote an ordering of the eigenspaces of $A$.
For $0\leq i\leq d$ let $\theta_i$ be the eigenvalue of $A$ corresponding to $V_i$.
Define $E_i\in {\rm End}(V)$ by
\begin{align}
(E_i-I)V_i&=0,\\
E_iV_j=0 \quad \text{  if  }&\quad j\neq i,\qquad (0\leq j\leq d).
\end{align}
In other words, $E_i$ is the projection map from $V$ onto $V_i$.  
We refer to $E_i$ as the {\it primitive idempotent} of
$A$ associated with $\theta_i$.
By elementary linear algebra, we have
\begin{align}
AE_i = E_iA = \theta_iE_i \qquad  (0 \leq i \leq d),
\label{eq:primid1}
\\
\quad E_iE_j = \delta_{ij}E_i\qquad (0 \leq i,j\leq d),
\label{eq:primid2}
\\
\quad V_i=E_iV  \quad \qquad  (0 \leq i \leq d),
\label{eq:primid0}
\\
I=\sum_{i=0}^d E_i.\qquad\qquad
\label{eq:primid3}
\end{align}
One readily checks that 
\begin{eqnarray*}
E_i = \prod_{{0 \leq  j \leq d}\atop{j\not=i}} {{A-\theta_j I}\over {\theta_i-\theta_j}}\qquad \qquad (0 \leq i \leq d).\label{EA} 
\end{eqnarray*}

Let $M$ denote the $\K$-subalgebra of ${\rm End}(V)$ generated by $A$.  
We note that each of $\{ A^i\}_{i=0}^d$, $\{E_i\}_{i=0}^d$ is a basis for the $\K$-vector space $M$.\\  

Given a TD pair $A,A^*$ on $V,$ an ordering of the eigenspaces of $A$ (resp. $A^*$) is said to be {\it standard} whenever (\ref{eq:t1}) (resp. (\ref{eq:t2})) holds.  
Let $\{V_i\}_{i=0}^d$ denote a standard ordering of the eigenspaces of $A$.  By \cite[Lemma 2.4]{Somealg}, the ordering $\{V_{d-i}\}_{i=0}^d$ is standard and no further ordering is standard.  A similar result holds for the eigenspaces of $A^*$.  An ordering of the primitive idempotents of $A$ (resp. $A^*$) is said to be {\it standard} whenever the corresponding ordering of the eigenspaces of $A$ (resp. $A^*$) is standard.

\begin{definition}{\rm \cite[Definition 2.1]{TDclass}} \label{def:TDsys} {\rm
Let $V$ denote a vector space over $\K$ with finite positive dimension.  
By a {\it tridiagonal system} (or {\em TD system}) on $V,$  we mean a 
sequence 
\begin{equation*}
\Phi = (A; \{ E_i\}_{i=0}^d; A^*;\{ E_i^*\}_{i=0}^d)
\end{equation*}
 that satisfies  (i)--(iii) below.
\begin{enumerate}
\item[{\rm (i)}] $A, A^*$ is a tridiagonal pair on $V$. 
\item [{\rm (ii)}]$\{ E_i\}_{i=0}^d$ is a standard ordering of the primitive 
idempotents of $\;A$.
\item[{\rm (iii)}] $\{ E_i^*\}_{i=0}^d$ is a standard ordering of the primitive 
idempotents of $\;A^*$.
\end{enumerate}
We call $d$ the {\it diameter} of $\Phi$,  
and say $\Phi$ is {\it over } $\K$. 
 For notational convenience, set $E_{-1}=0$, $E_{d+1}=0$, $
E^*_{-1}=0$, $E^*_{d+1}=0$.}\\
\end{definition}

In Definition \ref{def:TDsys} we do not assume that the primitive idempotents $\{ E_i\}_{i=0}^d,\{ E_i^*\}_{i=0}^d$ all 
have rank 1.  A TD system for which each these primitive idempotents does have rank 1 is called a Leonard system \cite{2LT}.
The Leonard systems  are classified up to isomorphism \cite[Theorem 1.9]{2LT}.\\


For the rest of the present paper, we fix a TD system $\Phi$ as in Definition \ref{def:TDsys}.\\

\begin{definition}\label{def:main}{\rm
For $0\leq i\leq d$ let $\theta_i$ (resp. $\theta_i^*$) denote the eigenvalue of $A$ (resp. $A^*$) associated with $E_i$ (resp. $E_i^*$).
We refer to $\{\theta_i\}_{i=0}^d$ (resp. $\{\theta_i^*\}_{i=0}^d$) as the {\it eigenvalue sequence} (resp. {\it dual eigenvalue sequence}) of $\Phi$. 
}\end{definition}


A given TD system can be modified in a number of ways to get a new TD system.  For example, given the TD system $\Phi$ in Definition \ref{def:TDsys},
the sequence
\begin{eqnarray*}
\Phi^{\Downarrow} = (A;\{ E_{d-i}\}_{i=0}^d;A^*;\{ E_i^*\}_{i=0}^d)
\end{eqnarray*}
is a TD system on $V$. 
Following  {\rm \cite[Section 3]{Somealg}}, we call $\Phi^{\Downarrow}$ the {\it second inversion} of $\Phi$.
When discussing $\Phi^{\Downarrow}$, we use the following notational convention.
For any object $f$ associated with $\Phi$ we let $f^{\Downarrow}$ denote the corresponding object for $\Phi^{\Downarrow}$.\\  


For later use, we associate with $\Phi$ two families of polynomials as follows.
Let $x$ be an indeterminate.  Let $\K [x]$ denote the $\K$-algebra consisting of the polynomials in $x$ that have all coefficients in $\K$.  For $0\leq i\leq j\leq d+1$, we define the polynomials $\tau_{ij}=\tau_{ij}(\Phi)$, $\eta_{ij}=\eta_{ij}(\Phi)$ in $\K [x]$ by
\begin{align} \tau_{ij}&=(x-\theta_i)(x-\theta_{i+1})\cdot\cdot\cdot (x-\theta_{j-1}), \label{tau}\\
\eta_{ij}&=(x-\theta_{d-i})(x-\theta_{d-i-1})\cdot\cdot\cdot (x-\theta_{d-j+1}). \label{eta}\end{align}
We interpret $\tau_{i,i-1}=0$ and $\eta_{i,i-1}=0$.  
Note that each of $\tau_{ij}$, $\eta_{ij}$ is monic with degree $j-i$.  In particular, $\tau_{ii}=1$ and $\eta_{ii}=1$.   
We remark that 
$\tau_{ij}^{\Downarrow}=\eta_{ij}$ and $\eta_{ij}^{\Downarrow}=\tau_{ij}$.\\

Observe that for $0\leq i\leq j\leq k\leq d+1$, 
\begin{equation}
\tau_{ij} \tau_{jk} = \tau_{ik}, \hspace{1 in}
\eta_{ij} \eta_{jk} = \eta_{ik}.\label{eq:taumult}
\end{equation}
  
As we proceed through the paper, we will focus on $\tau_{ij}$.  We will develop a number of results concerning $\tau_{ij}$.  Similar results hold for $\eta_{ij}$, although we will not state them explicitly.

\begin{lemma}\label{lemma:taukernel}
For $0\leq i\leq j\leq d+1$,
the kernel of $\tau_{ij}(A)$ is 
\begin{equation*}
E_iV+E_{i+1}V+\cdots +E_{j-1}V.
\end{equation*}
\end{lemma}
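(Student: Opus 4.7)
The proof is a direct computation using the eigenspace decomposition of $V$ afforded by the primitive idempotents of $A$. The plan is as follows.

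First, I would invoke the decomposition $V = \sum_{k=0}^d E_k V$, which is a direct sum by (\ref{eq:primid2}) and (\ref{eq:primid3}). Since each polynomial in $A$ commutes with each $E_k$ and preserves each summand $E_kV$, the kernel of $\tau_{ij}(A)$ is the direct sum of its intersections with the subspaces $E_kV$, so it suffices to determine for which $k$ the operator $\tau_{ij}(A)$ vanishes on $E_kV$.

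Next, I would use (\ref{eq:primid1}) to observe that $A$ acts on $E_kV$ as the scalar $\theta_k$, so that for any $v \in E_kV$,
\begin{equation*}
\tau_{ij}(A)v = \prod_{m=i}^{j-1}(\theta_k - \theta_m)\, v.
\end{equation*}
Hence $\tau_{ij}(A)$ restricted to $E_kV$ is either identically zero (if the scalar vanishes) or an invertible scalar multiple of the identity (otherwise). Since the eigenvalues $\theta_0,\ldots,\theta_d$ are mutually distinct, the scalar $\prod_{m=i}^{j-1}(\theta_k-\theta_m)$ vanishes precisely when $k \in \{i,i+1,\ldots,j-1\}$.

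Combining these observations, the kernel of $\tau_{ij}(A)$ equals the direct sum $\sum_{k=i}^{j-1} E_kV$, as claimed. The boundary convention $\tau_{i,i-1}=0$ does not arise here since we assume $i\leq j$; the edge case $j=i$ is covered by $\tau_{ii}=1$ whose kernel is $0$, matching the empty sum. No real obstacle is anticipated, as this is essentially the standard observation that a polynomial in a diagonalizable operator has kernel equal to the sum of the eigenspaces whose eigenvalues are roots of the polynomial.
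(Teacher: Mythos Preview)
Your proof is correct and follows exactly the same idea as the paper's proof, which simply notes that $E_hV$ is the eigenspace of $A$ for $\theta_h$ and invokes the definition (\ref{tau}) of $\tau_{ij}$. You have merely spelled out the details more fully.
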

\begin{proof}
For $0\leq h\leq d$, $E_hV$ is the eigenspace of $A$ corresponding to $\theta_h$.  The result follows from this and (\ref{tau}).
\end{proof}

\medskip

For $0\leq j \leq d+1$, we abbreviate 
\begin{equation*}
\tau_j=\tau_{0j},\qquad \qquad \eta_j=\eta_{0j}.
\end{equation*} Thus
\begin{align} 
\tau_{j}&=(x-\theta_0)(x-\theta_{1})\cdot\cdot\cdot (x-\theta_{j-1}),\label{eq:tau_j}\\
\eta_{j}&=(x-\theta_{d})(x-\theta_{d-1})\cdot\cdot\cdot (x-\theta_{d-j+1}).\label{eq:eta_j}
\end{align}

\bigskip

In our discussion of $\Psi$, the following scalars will be useful.

\begin{definition}{\rm \cite[Section 10]{2LT}}\label{def:vartheta}
{\rm For $0\leq i \leq d+1$, define
\begin{equation*}
\vartheta_i = \sum_{h=0}^{i-1}\frac{\theta_h-\theta_{d-h}}{\theta_0-\theta_d}.
\end{equation*}
}
\end{definition}

We observe that
\begin{eqnarray}
\vartheta_{i+1}-\vartheta_i=\frac{\theta_i-\theta_{d-i}}{\theta_0-\theta_d} \qquad\qquad (0\leq i\leq d).\label{line:varthetadiff}
\end{eqnarray}

These scalars will be discussed further in Section \ref{section:vartheta}.

\section{The first split decomposition of $V$}\label{section:U}
We continue to discuss the TD system $\Phi$ from Definition \ref{def:TDsys}.\\  

We use the following concept.
By a {\it decomposition} of $V,$ we mean a sequence of subspaces whose direct sum is $V$.  
For example, $\{ E_iV\}_{i=0}^d$ and $\{ E_i^*V\}_{i=0}^d$ are decompositions of $V$.  
There are two more decompositions of $V$ of interest called the first and second split decomposition.  In this section, we discuss the first split decomposition of $V$.  In Section  \ref{section:Udd}, we will discuss the second split decomposition of $V$.

\begin{definition}\label{def:U}{\rm
For $0\leq i\leq d$ define 
\begin{equation*}
U_i = (E^*_0V+E^*_1V+\cdots + E^*_iV)\cap (E_iV+E_{i+1}V+\cdots + E_dV).
\end{equation*}
For notational convenience, define $U_{-1}=0$ and $U_{d+1}=0$.}
\end{definition}


\begin{thm}{\rm \cite[Theorem 4.6]{Somealg} }\label{thm:U}
The sequence $\{U_i\}_{i=0}^d$ is a decomposition of $V$.  
Moreover, the following {\rm (i)--(iii)} hold.
\begin{enumerate}
\item[{\rm (i)}] $(A-\theta_i I)U_i\subseteq U_{i+1}\qquad (0\leq i \leq d-1 ), \qquad (A-\theta_d I)U_d=0$.
\item[{\rm (ii)}] $(A^*-\theta_i^* I)U_i\subseteq U_{i-1}\qquad (1\leq i\leq d ), \qquad (A^*-\theta_0^* I)U_0=0$.
\item[{\rm (iii)}] For $0\leq i \leq d$ both
\begin{align*}
U_i+U_{i+1}+\cdots + U_d&=E_iV+E_{i+1}V+\cdots + E_dV,\\
U_0+U_{1}+\cdots + U_i&=E_0^*V+E_{1}^*V+\cdots + E_i^*V.
\end{align*}
\end{enumerate}
\end{thm}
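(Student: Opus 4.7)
The plan is to build the decomposition $\{U_i\}_{i=0}^d$ from the two filtrations $W_j=E_jV+\cdots+E_dV$ and $W^*_j=E^*_0V+\cdots+E^*_jV$ (so $U_i=W^*_i\cap W_i$), use the obvious closure properties $AW^*_j\subseteq W^*_{j+1}$ (from TD axiom (iii)) and $A^*W_j\subseteq W_{j-1}$ (from axiom (ii)) to prove the action statements (i), (ii), invoke irreducibility to get $V=\sum_iU_i$, and finally obtain directness via a symmetry trick applied to the reversed pair.

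For (i), take $u\in U_i$. Expanding $u=\sum_{k=i}^dE_ku$ in $W_i$ yields $(A-\theta_iI)u=\sum_{k>i}(\theta_k-\theta_i)E_ku\in W_{i+1}$. On the other hand, $u\in W^*_i$ combined with $AW^*_i\subseteq W^*_{i+1}$ gives $(A-\theta_iI)u\in W^*_{i+1}$. Therefore $(A-\theta_iI)u\in U_{i+1}$, and $U_d\subseteq E_dV$ handles the endpoint $(A-\theta_dI)U_d=0$. Item (ii) is the dual calculation. The sum $W:=\sum_iU_i$ is then $A$- and $A^*$-invariant by (i) and (ii), and $U_0=E^*_0V\neq 0$, so TD axiom (iv) forces $W=V$. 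Splitting this sum at index $i$ yields $V=W^*_i+W_{i+1}$ for every $i$. Part (iii) follows by induction on $i$: the base case is $W^*_0=E^*_0V=U_0$, and for the step, write $v\in W^*_i$ as $v=w'+w$ with $w'\in W^*_{i-1}$, $w\in W_i$; then $w=v-w'\in W^*_i\cap W_i=U_i$, and the inductive hypothesis handles $w'$. The identity $W_i=U_i+\cdots+U_d$ is symmetric.

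The main obstacle is directness of the sum, since nothing above rules out overlap between the two filtrations. My plan is to exploit the fact that $(A^*,A)$ is again a TD pair using the original standard orderings, because axioms (ii) and (iii) of Definition \ref{def:tdp} are symmetric in the roles of $A$ and $A^*$. Applying the ``$W=V$'' step to this new pair produces the identity $V=(E_0V+\cdots+E_iV)+(E^*_{i+1}V+\cdots+E^*_dV)$. Since the two summands are the complements in $V$ of $W_{i+1}$ and $W^*_i$ respectively, we obtain $\dim W^*_i+\dim W_{i+1}\leq\dim V$. Combined with the opposite inequality coming from $V=W^*_i+W_{i+1}$, we conclude $V=W^*_i\oplus W_{i+1}$, so $W^*_{k-1}\cap W_k=0$ for every $k$. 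Directness then follows routinely: if $\sum_ju_j=0$ with $u_j\in U_j$ and $k$ is the largest index with $u_k\neq 0$, then $u_k=-\sum_{j<k}u_j\in W^*_{k-1}\cap W_k=0$, a contradiction.
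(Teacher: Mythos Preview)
Your argument is correct. The present paper does not prove this statement but simply cites \cite[Theorem 4.6]{Somealg}; the proof you have written is essentially the one given there, including the symmetry step of passing to the swapped pair $(A^*,A)$ to obtain the complementary identity $V=(E_0V+\cdots+E_iV)+(E^*_{i+1}V+\cdots+E^*_dV)$ and hence the dimension inequality that forces $W^*_i\cap W_{i+1}=0$.
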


\begin{definition}
{\rm With reference to Definition \ref{def:U}, 
we refer to the sequence $\{U_i\}_{i=0}^d$ as the {\it first split decomposition} of $V$.
}\end{definition}


\begin{lemma}{\rm \cite[Corollary 5.7]{Somealg}} \label{lemma:dim1}
For $0\leq i\leq d$ the dimensions of
$E_iV$, $E_i^*V$, $U_i$ coincide.  Denoting this common dimension by $\rho_i$, we have $\rho_i=\rho_{d-i}$.
\end{lemma}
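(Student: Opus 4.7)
The plan is to prove the dimension equalities by exploiting the partial-sum identities in Theorem~\ref{thm:U}(iii), and then to obtain the symmetry $\rho_i=\rho_{d-i}$ by applying the first part of the lemma to the second inversion $\Phi^{\Downarrow}$.

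First, I would establish $\dim U_i=\dim E_i^*V$. Since $\{U_j\}_{j=0}^d$ is a decomposition of $V$, the sum $U_0+U_1+\cdots+U_i$ is direct, so its dimension equals $\sum_{j=0}^{i}\dim U_j$. On the other side, by (\ref{eq:primid2}) and (\ref{eq:primid0}) the subspaces $E_0^*V,E_1^*V,\ldots,E_d^*V$ are linearly independent in sum, so $\dim(E_0^*V+\cdots+E_i^*V)=\sum_{j=0}^{i}\dim E_j^*V$. Equating these via Theorem~\ref{thm:U}(iii) and subtracting the identity for $i-1$ from the identity for $i$ gives $\dim U_i=\dim E_i^*V$. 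An entirely parallel argument using the other identity in Theorem~\ref{thm:U}(iii), subtracting the identity for $i+1$ from that for $i$, yields $\dim U_i=\dim E_iV$. This settles the first claim.

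For the symmetry $\rho_i=\rho_{d-i}$, I would invoke the second inversion $\Phi^{\Downarrow}=(A;\{E_{d-i}\}_{i=0}^d;A^*;\{E_i^*\}_{i=0}^d)$, which is itself a TD system. Applying the (already-proved) first part of the lemma to $\Phi^{\Downarrow}$ gives that $\dim U_i^{\Downarrow}$, $\dim E_i^{\Downarrow}V=\dim E_{d-i}V$, and $\dim (E_i^*)^{\Downarrow}V=\dim E_i^*V$ all coincide. In particular $\dim E_{d-i}V=\dim E_i^*V$, which combined with $\dim E_iV=\dim E_i^*V$ from $\Phi$ produces $\dim E_iV=\dim E_{d-i}V$, i.e.\ $\rho_i=\rho_{d-i}$.

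No step here is especially deep once Theorem~\ref{thm:U} is in hand; the only thing to be careful about is the bookkeeping of the telescoping argument (subtracting consecutive partial-sum identities) and the fact that the second inversion shares its dual primitive idempotents with $\Phi$, which is what allows the symmetry to propagate from $\dim E_i^*V$ to the pair $\dim E_iV$ and $\dim E_{d-i}V$.
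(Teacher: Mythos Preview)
Your argument is correct. The paper does not supply its own proof of this lemma; it simply cites \cite[Corollary~5.7]{Somealg} and moves on. So there is nothing to compare against directly, but your derivation from Theorem~\ref{thm:U}(iii) is exactly the natural one: telescope the two partial-sum identities to get $\dim U_i=\dim E_i^*V$ and $\dim U_i=\dim E_iV$, then feed the same equalities through $\Phi^{\Downarrow}$ to obtain $\dim E_i^*V=\dim E_{d-i}V$ and hence $\rho_i=\rho_{d-i}$. This is essentially how the cited reference proceeds as well.
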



\begin{definition}\label{def:shape}{\rm \cite[Section 1]{shape}
With reference to Lemma \ref{lemma:dim1}, we refer to the sequence $\{\rho_i\}_{i=0}^d$ as the {\it shape} of $\Phi$.  Note that $\Phi$ and $\Phi^{\Downarrow}$ have the same shape.}
\end{definition}

\begin{lemma}\label{lemma:AU}
Both\begin{align}
&AU_i\subseteq U_i+U_{i+1} \qquad (0\leq i\leq d-1), \qquad   AU_d\subseteq U_d,\label{eq:AU1}\\
&A^*U_i\subseteq U_i+U_{i-1} \qquad \quad (1\leq i\leq d), \qquad A^*U_0\subseteq U_0.\nonumber
\end{align}
\end{lemma}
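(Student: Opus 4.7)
The plan is to derive this lemma as an immediate consequence of Theorem \ref{thm:U}, simply by splitting off the scalar part of $A$ (resp.\ $A^*$) on each summand $U_i$.

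First I would handle the $A$-part. For $0\leq i \leq d-1$, I write
\begin{equation*}
A = (A-\theta_i I) + \theta_i I
\end{equation*}
and apply this to $U_i$. By Theorem \ref{thm:U}(i), $(A-\theta_i I)U_i \subseteq U_{i+1}$, and trivially $\theta_i I \cdot U_i \subseteq U_i$. Adding these gives $AU_i \subseteq U_i + U_{i+1}$. For the case $i=d$, Theorem \ref{thm:U}(i) gives $(A-\theta_d I)U_d = 0$, i.e.\ $A$ acts on $U_d$ as $\theta_d I$, so $AU_d \subseteq U_d$.

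Next I would handle the $A^*$-part by exactly the same argument, using Theorem \ref{thm:U}(ii) in place of (i). For $1\leq i\leq d$, writing $A^* = (A^*-\theta_i^* I) + \theta_i^* I$ and applying to $U_i$ yields $A^*U_i \subseteq U_{i-1} + U_i$. For $i=0$, Theorem \ref{thm:U}(ii) gives $(A^*-\theta_0^* I)U_0 = 0$, so $A^*U_0 \subseteq U_0$.

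There is no real obstacle here; the lemma is a direct reformulation of Theorem \ref{thm:U}(i),(ii) in a form that makes the tridiagonal (in fact, bidiagonal, with respect to the split decomposition) nature of the action explicit and ready for use in later sections.
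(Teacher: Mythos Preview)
Your proof is correct and matches the paper's approach exactly: the paper's proof consists of the single line ``Use Theorem \ref{thm:U}(i),(ii),'' and your argument is precisely the routine unpacking of that citation via the splitting $A = (A-\theta_i I) + \theta_i I$ (and similarly for $A^*$).
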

\begin{proof}
Use Theorem \ref{thm:U}(i),(ii).
\end{proof}

\begin{cor}\label{cor:A^kU}
For $0\leq i\leq d$ both
\begin{eqnarray}
A^k U_i\subseteq U_i + U_{i+1}+\cdots + U_{i+k} & \qquad (0\leq k\leq d-i), \label{eq:A^kU} \\
\left(A^*\right)^k U_i\subseteq U_i + U_{i-1}+\cdots + U_{i-k} & \qquad (0\leq k\leq i).\label{eq:A^kU2}
\end{eqnarray}  
\end{cor}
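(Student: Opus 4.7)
The plan is to prove both containments by a straightforward induction on $k$, bootstrapping from the one-step bounds in Lemma \ref{lemma:AU}. I focus on (\ref{eq:A^kU}); the argument for (\ref{eq:A^kU2}) is formally identical with the roles of $A$, $A^*$ interchanged and indices decreasing instead of increasing.

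First I would treat the base case $k=0$, where $A^0 = I$ gives $U_i \subseteq U_i$ trivially. For the inductive step, fix $i$ with $0 \leq i \leq d$ and suppose inductively that
\[
A^{k-1} U_i \subseteq U_i + U_{i+1} + \cdots + U_{i+k-1}
\]
for some $k$ with $1 \leq k \leq d-i$. Then
\[
A^k U_i = A\bigl(A^{k-1} U_i\bigr) \subseteq A(U_i) + A(U_{i+1}) + \cdots + A(U_{i+k-1}).
\]
Applying Lemma \ref{lemma:AU} to each summand, $A(U_j) \subseteq U_j + U_{j+1}$ for $0 \leq j \leq d-1$ and $A(U_d) \subseteq U_d$; in either case $A(U_j) \subseteq U_j + U_{j+1}$ with the convention $U_{d+1}=0$. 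Summing these inclusions for $j=i,i+1,\ldots,i+k-1$ gives
\[
A^k U_i \subseteq U_i + U_{i+1} + \cdots + U_{i+k},
\]
which closes the induction. The constraint $k \leq d-i$ ensures that the largest index $i+k$ does not exceed $d$, so no summands are lost to the boundary.

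The second inclusion (\ref{eq:A^kU2}) follows by exactly the same induction on $k$, using the other half of Lemma \ref{lemma:AU}: $A^*(U_j) \subseteq U_j + U_{j-1}$ for $1 \leq j \leq d$ and $A^*(U_0) \subseteq U_0$, with the convention $U_{-1}=0$. The condition $k \leq i$ ensures the smallest index $i-k$ is nonnegative. Since the argument is a routine induction driven entirely by Lemma \ref{lemma:AU}, there is no real obstacle; the only thing to keep track of is the boundary behavior at $U_0$ and $U_d$, which is already absorbed into the statement of Lemma \ref{lemma:AU} and the index ranges specified in the corollary.
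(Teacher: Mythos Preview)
Your proof is correct and is exactly the approach the paper takes: the paper's proof consists of the single line ``Use Lemma \ref{lemma:AU},'' and your induction on $k$ is precisely the routine verification that sentence abbreviates.
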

\begin{proof}
Use Lemma \ref{lemma:AU}.
\end{proof}

\begin{definition}{\rm \cite[Definition 5.2]{Somealg}} \label{def:F_i}{\rm
For $0\leq i\leq d$ define $F_i\in {\rm End}(V)$ by 
\begin{align}
(F_i-I)U_i&=0,\label{eq:F_i1}\\
F_iU_j=0\quad \text{  if  }&\quad j\neq i,\qquad (0\leq j\leq d).\label{eq:F_i2}
\end{align}
In other words, $F_i$ is the projection map from $V$ onto $U_i$.  For notational convenience, define $F_{-1}=0$ and $F_{d+1}=0$.}
\end{definition}


\begin{lemma}{\rm \cite[Lemma 5.3]{Somealg}} \label{lemma:F}
With reference to Definition \ref{def:F_i}, both
\begin{eqnarray}
F_iF_j=\delta_{ij}F_i &\qquad (0\leq i,j\leq d),\label{eq:FF}\\
I=\sum_{i=0}^d F_i.&\nonumber
\end{eqnarray}
\end{lemma}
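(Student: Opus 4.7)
The plan is to derive both identities directly from the fact, supplied by Theorem \ref{thm:U}, that $\{U_i\}_{i=0}^d$ is a direct sum decomposition of $V$, together with the defining relations \eqref{eq:F_i1} and \eqref{eq:F_i2} for the $F_i$.

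First I would fix $v\in V$ and use the direct sum decomposition to write $v=\sum_{h=0}^d u_h$ uniquely, with $u_h\in U_h$. Then from \eqref{eq:F_i1} we have $F_i u_i=u_i$ (since $F_i$ is the identity on $U_i$), and from \eqref{eq:F_i2} we have $F_i u_h=0$ whenever $h\neq i$. Summing over $h$ by linearity, $F_i v=u_i$. In particular, $\sum_{i=0}^d F_i v=\sum_{i=0}^d u_i=v$, which is the claim $I=\sum_{i=0}^d F_i$.

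For the first identity, I would again argue on the same decomposition. Fix $i,j$ with $0\le i,j\le d$ and apply $F_iF_j$ to the expansion $v=\sum_h u_h$. By the computation above, $F_j v = u_j$, so $F_iF_j v=F_i u_j$. If $j\neq i$, then $u_j\in U_j$ and \eqref{eq:F_i2} gives $F_i u_j=0$, hence $F_iF_j=0$. If $j=i$, then $u_i\in U_i$ and \eqref{eq:F_i1} gives $F_i u_i=u_i=F_i v$, hence $F_i^2=F_i$. Combining, $F_iF_j=\delta_{ij}F_i$.

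There is no real obstacle here; the only subtlety is making sure to invoke Theorem \ref{thm:U} for the directness of the sum $V=\sum_{i=0}^d U_i$, since without this uniqueness the components $u_h$ would not be well-defined and both $F_i$ itself and the identities would be ambiguous. Once this is acknowledged, the argument is a one-line expansion in each case.
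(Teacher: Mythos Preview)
Your proof is correct. The paper does not give its own proof of this lemma but simply cites \cite[Lemma 5.3]{Somealg}; your argument is the standard one showing that the projection operators associated to a direct sum decomposition satisfy these identities, and it uses exactly the ingredients available in the paper (Theorem~\ref{thm:U} for the directness of $\sum_i U_i$ and the defining relations \eqref{eq:F_i1}, \eqref{eq:F_i2}).
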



\begin{definition}{\rm \cite[Definition 6.1]{Somealg}} \label{def:R}{\rm
Define
\begin{equation*}
R = A-\sum_{h=0}^{d}\theta_h F_h,\qquad \qquad
L = A^*-\sum_{h=0}^{d}\theta_h^* F_h.\\
\end{equation*}
We refer to $R$ (resp. $L$) as the {\it raising map} (resp. {\it lowering map}) for $\Phi$.} 
\end{definition}


\begin{lemma}{\rm \cite[Lemma 6.2]{Somealg}} \label{lemma:RA}
For $0\leq i\leq d$
the following hold on $U_i$. 
\begin{eqnarray*}
R=A-\theta_i I,\qquad \qquad L=A^*-\theta_i^* I.
\end{eqnarray*}
\end{lemma}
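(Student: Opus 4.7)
The plan is to evaluate the sums in Definition \ref{def:R} directly on a vector $u \in U_i$, using the projection property of the maps $F_h$. First I would fix $i$ with $0 \leq i \leq d$ and an arbitrary $u \in U_i$. By Definition \ref{def:F_i}, the map $F_h$ is the projection onto $U_h$ along the decomposition $\{U_j\}_{j=0}^d$ from Theorem \ref{thm:U}, so $F_i u = u$ and $F_h u = 0$ for $h \neq i$.

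Next, I would substitute into the formula for $R$ from Definition \ref{def:R}:
\begin{equation*}
R u \;=\; A u \;-\; \sum_{h=0}^{d} \theta_h F_h u \;=\; A u \;-\; \theta_i u \;=\; (A - \theta_i I) u,
\end{equation*}
which establishes the first identity on $U_i$. The second identity follows by the identical argument applied to $L$, using $A^*$ and $\theta_i^*$ in place of $A$ and $\theta_i$.

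There is no real obstacle here; the lemma is essentially a direct unpacking of the definitions of $R$, $L$, and the projectors $F_h$, together with the fact (from Lemma \ref{lemma:F}) that $F_i F_j = \delta_{ij} F_i$ ensures the projectors behave as expected on each summand $U_i$. The only thing worth noting is that the formula depends on the decomposition $V = \bigoplus_{h=0}^d U_h$ being direct, which is guaranteed by Theorem \ref{thm:U}.
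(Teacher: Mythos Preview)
Your proof is correct and is exactly the natural argument: evaluate the defining sum for $R$ (resp.\ $L$) on $U_i$ using $F_h u = \delta_{hi} u$. The paper does not give its own proof of this lemma but simply cites \cite[Lemma~6.2]{Somealg}; your argument is the standard one and matches what is implicit in that reference.
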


\bigskip
Combining Theorem \ref{thm:U}(i),(ii) with Lemma \ref{lemma:RA} we obtain the following result.

\medskip

\begin{lemma}\label{lemma:RULU} Both
\begin{eqnarray}
&RU_i\subseteq U_{i+1} \qquad (0\leq i\leq d-1),& \qquad RU_d=0, \label{RU}\\
&LU_i\subseteq U_{i-1} \qquad\qquad (1\leq i\leq d),& \ \qquad LU_0=0. \label{LU}
\end{eqnarray}
\end{lemma}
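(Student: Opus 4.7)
The plan is essentially a direct substitution, piecing together two facts we already have. The first fact is Lemma \ref{lemma:RA}, which says that on $U_i$ the map $R$ agrees with $A - \theta_i I$ and $L$ agrees with $A^* - \theta_i^* I$. The second fact is Theorem \ref{thm:U}(i),(ii), which describes exactly how $A - \theta_i I$ and $A^* - \theta_i^* I$ shift the first split decomposition.

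First I would handle $R$. Fix $i$ with $0 \leq i \leq d-1$ and pick $u \in U_i$. By Lemma \ref{lemma:RA}, $Ru = (A - \theta_i I)u$, and by Theorem \ref{thm:U}(i) this lies in $U_{i+1}$. This establishes $RU_i \subseteq U_{i+1}$. For $i = d$, the same identification gives $Ru = (A - \theta_d I)u = 0$ by the second clause of Theorem \ref{thm:U}(i), so $RU_d = 0$.

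The argument for $L$ is symmetric: for $1 \leq i \leq d$ and $u \in U_i$, Lemma \ref{lemma:RA} gives $Lu = (A^* - \theta_i^* I)u$, which lies in $U_{i-1}$ by Theorem \ref{thm:U}(ii); and for $i=0$ the same identification with the second clause of Theorem \ref{thm:U}(ii) yields $LU_0 = 0$. There is no real obstacle here; the content of the lemma is that the raising and lowering maps are precisely the ``nontrivial'' parts of $A$ and $A^*$ once the diagonal pieces $\sum_h \theta_h F_h$ and $\sum_h \theta_h^* F_h$ are stripped off, and this fact is already encoded in Lemma \ref{lemma:RA}.
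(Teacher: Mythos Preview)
Your proof is correct and follows exactly the same approach as the paper, which simply states that the lemma is obtained by combining Theorem \ref{thm:U}(i),(ii) with Lemma \ref{lemma:RA}. You have just written out the details of that combination.
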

\bigskip

\begin{cor}\label{cor:Rtau}
The expression 
\begin{equation*}
R^{j-i}-\tau_{ij}(A)
\end{equation*}
vanishes on $U_i$ for $0\leq i\leq j \leq d+1$.
\end{cor}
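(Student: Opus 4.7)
My plan is to prove the statement by induction on the difference $k = j - i$, using the two key facts already established about $R$: namely that $R$ acts as $A - \theta_i I$ on $U_i$ (Lemma \ref{lemma:RA}), and that $R$ maps $U_i$ into $U_{i+1}$ (Lemma \ref{lemma:RULU}).

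For the base case $k = 0$ (so $j = i$), the polynomial $\tau_{ii}$ is the empty product, equal to $1$, and $R^{0} = I$. Hence $R^{j-i} - \tau_{ij}(A) = I - I = 0$ identically on $V$, and in particular on $U_i$.

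For the inductive step, suppose the conclusion holds for all pairs whose difference is $k$, and consider $j - i = k + 1$ with $0 \leq i \leq d$. Fix $v \in U_i$. By Lemma \ref{lemma:RA}, $Rv = (A - \theta_i I)v$, and by Lemma \ref{lemma:RULU}, $Rv \in U_{i+1}$. Applying the induction hypothesis to $U_{i+1}$ with the pair $(i+1, j)$ (whose difference is $k$), we get
\begin{equation*}
R^{k}(Rv) \;=\; \tau_{i+1,j}(A)\,(Rv) \;=\; \tau_{i+1,j}(A)\,(A - \theta_i I)\,v.
\end{equation*}
Since the factors $(A - \theta_h I)$ commute pairwise as elements of the polynomial algebra generated by $A$, the factorization $\tau_{ij} = (x-\theta_i)\,\tau_{i+1,j}$ (a special case of (\ref{eq:taumult})) gives $\tau_{i+1,j}(A)(A-\theta_i I) = \tau_{ij}(A)$. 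Therefore $R^{j-i} v = \tau_{ij}(A) v$, as desired.

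There is no real obstacle to speak of here; the only minor point to get right is ensuring the induction is set up so that the inductive hypothesis can be applied on $U_{i+1}$ after one application of $R$, rather than trying to peel off a factor of $R$ on the left. The statement is essentially a formalization of the observation that $R$ behaves like $A-\theta_i I$ on each successive layer of the first split decomposition, so composing $j - i$ raising operations produces the shifted product $\tau_{ij}(A)$.
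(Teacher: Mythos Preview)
Your proof is correct and follows essentially the same approach as the paper, which simply cites (\ref{tau}), (\ref{RU}), and Lemma~\ref{lemma:RA}; you have spelled out the implicit induction on $j-i$ using exactly these ingredients. The argument is complete as written.
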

\begin{proof}
Use (\ref{tau}), (\ref{RU}), and Lemma \ref{lemma:RA}.
\end{proof}


\begin{lemma} \label{lemma:tauU_i}
For $0\leq i\leq j\leq d+1$, 
\begin{equation*}
\tau_{ij}(A)U_i\subseteq U_j.
\end{equation*}
\end{lemma}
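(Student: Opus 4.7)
The plan is to reduce the statement to the behavior of the raising map $R$ on the first split decomposition, since $R$ has already been shown to send $U_i$ into $U_{i+1}$.

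First, I would invoke Corollary \ref{cor:Rtau}, which tells us that $R^{j-i} - \tau_{ij}(A)$ vanishes on $U_i$ for $0 \leq i \leq j \leq d+1$. Consequently, for any $u \in U_i$ we have $\tau_{ij}(A)u = R^{j-i}u$, so it suffices to show that $R^{j-i}U_i \subseteq U_j$.

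Next, I would apply Lemma \ref{lemma:RULU} iteratively. When $i \leq j \leq d$, a simple induction on $k = j - i$ gives $R^k U_i \subseteq U_{i+k} = U_j$: the base case $k=0$ is trivial, and in the inductive step $R^{k+1}U_i = R(R^k U_i) \subseteq R U_{i+k} \subseteq U_{i+k+1}$ by \eqref{RU}. The edge case $j = d+1$ is handled by the same induction together with the boundary equality $RU_d = 0$, which gives $R^{d+1-i}U_i \subseteq R U_d = 0 = U_{d+1}$, consistent with the convention $U_{d+1}=0$.

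Combining these two steps yields $\tau_{ij}(A)U_i = R^{j-i}U_i \subseteq U_j$, which is the desired conclusion. There is no real obstacle here; the content has already been done in Corollary \ref{cor:Rtau} and Lemma \ref{lemma:RULU}, and the lemma simply packages the two together.
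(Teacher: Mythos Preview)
Your proof is correct and follows exactly the same approach as the paper: the paper's own proof simply says ``Use Lemma~\ref{lemma:RULU} and Corollary~\ref{cor:Rtau},'' and your argument is a faithful unpacking of that, including the handling of the boundary case $j=d+1$.
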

\begin{proof}
Use Lemma \ref{lemma:RULU} and Corollary \ref{cor:Rtau}.
\end{proof}

\medskip


The following result is a reformulation of \cite[Lemma 6.5]{Somealg}.  

\medskip

\begin{lemma}{\rm \cite[Lemma 6.5]{Somealg}} \label{lemma:6.5}
For $0\leq i \leq j\leq d$ the linear transformation
\begin{align*}
U_i&\to U_j  \\
v \ &\mapsto \tau_{ij}(A)v
\end{align*}
is an injection if $i+j\leq d$, a bijection if $i+j=d$, and a surjection if $i+j\geq d$.
\end{lemma}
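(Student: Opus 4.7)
The plan is to first establish the middle case $i+j=d$, then bootstrap to the other two cases using the multiplicativity (\ref{eq:taumult}) of the polynomials $\tau_{ij}$. The map $v \mapsto \tau_{ij}(A)v$ is well defined as a map $U_i \to U_j$ by Lemma \ref{lemma:tauU_i}, and by Corollary \ref{cor:Rtau} it coincides on $U_i$ with the iterated raising map $R^{j-i}$, so throughout I may freely pass between these two descriptions.

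For the middle case $i+j=d$, Lemma \ref{lemma:dim1} gives $\dim U_i = \rho_i = \rho_{d-i} = \dim U_j$, so injectivity, surjectivity and bijectivity are equivalent. The bijection itself is precisely \cite[Lemma 6.5]{Somealg}, which asserts (in the equivalent formulation in terms of the raising map) that $R^{d-2i}: U_i \to U_{d-i}$ is a bijection for $0 \leq i \leq d/2$. This middle case is the heart of the matter and the main obstacle; since the present lemma is explicitly advertised as a reformulation of the cited result, I simply cite it here.

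Once the middle case is in hand, the remaining two cases follow by a short compose-and-decompose argument. For the injection case $i+j \leq d$, set $k = d - i$; the hypotheses $i \leq j$ and $i + j \leq d$ yield $i \leq j \leq k$, together with $i + k = d$. By (\ref{eq:taumult}) and Lemma \ref{lemma:tauU_i} we have the factorization
\begin{equation*}
U_i \xrightarrow{\tau_{ij}(A)} U_j \xrightarrow{\tau_{jk}(A)} U_k,
\end{equation*}
whose composite is the restriction $\tau_{ik}(A)|_{U_i}$, which is a bijection by the middle case. Consequently the first factor $\tau_{ij}(A): U_i \to U_j$ must be injective.

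For the surjection case $i+j \geq d$, set $k = d - j$; the hypotheses $i \leq j$ and $i + j \geq d$ yield $k \leq i \leq j$, together with $k + j = d$. Again by (\ref{eq:taumult}) and Lemma \ref{lemma:tauU_i} we have the factorization
\begin{equation*}
U_k \xrightarrow{\tau_{ki}(A)} U_i \xrightarrow{\tau_{ij}(A)} U_j,
\end{equation*}
whose composite is the restriction $\tau_{kj}(A)|_{U_k}$, which is a bijection by the middle case. Consequently the second factor $\tau_{ij}(A): U_i \to U_j$ must be surjective. This completes the plan.
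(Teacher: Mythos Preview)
Your argument is correct. The factorization trick via (\ref{eq:taumult}) is clean, and the index checks ($i \leq j \leq k$ in the first case, $k \leq i \leq j$ in the second) all work out.

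That said, your route is slightly different from the paper's. The paper does not bootstrap from the bijection case at all: the cited result \cite[Lemma~6.5]{Somealg} already gives the full trichotomy (injection, bijection, surjection) for the map $v \mapsto R^{j-i}v$ on $U_i$, and the paper simply invokes Corollary~\ref{cor:Rtau} once to pass from $R^{j-i}$ to $\tau_{ij}(A)$. You, by contrast, only cite the bijection case and then re-derive the injection and surjection cases yourself via the compose-and-decompose argument. What you gain is a more self-contained and structurally transparent proof that shows exactly how the three cases are linked; what the paper gains is brevity, since all three cases are already packaged in the cited lemma.
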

\begin{proof}
By {\rm \cite[Lemma 6.5]{Somealg}} the linear transformation
$U_i\to U_j$, $v\mapsto R^{j-i}v$
is an injection if $i+j\leq d$, a bijection if $i+j=d$, and a surjection if $i+j\geq d$.
The result follows from this and Corollary \ref{cor:Rtau}.
\end{proof}

\begin{cor}\label{cor:6.5inj} 
The restriction of $A-\theta_{i}I$ to $U_i$ is injective for $0\leq i< d/2$.
\end{cor}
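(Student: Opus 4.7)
The plan is to apply Lemma \ref{lemma:6.5} with $j = i+1$. Unpacking the definition (\ref{tau}), we have $\tau_{i,i+1} = x - \theta_i$, so $\tau_{i,i+1}(A) = A - \theta_i I$. Therefore the restriction of $A - \theta_i I$ to $U_i$ is precisely the linear transformation $U_i \to U_{i+1}$, $v \mapsto \tau_{i,i+1}(A)v$ (that the image lands in $U_{i+1}$ is guaranteed by Lemma \ref{lemma:tauU_i}).

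By Lemma \ref{lemma:6.5}, this map is injective whenever $i + (i+1) \leq d$, that is, whenever $2i + 1 \leq d$. For integer $i$ and $d$, this condition is equivalent to $2i < d$, i.e., $i < d/2$, which is exactly the hypothesis of the corollary.

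There is really no obstacle here: the corollary is an immediate specialization of Lemma \ref{lemma:6.5} once one recognizes that $A - \theta_i I$ restricted to $U_i$ coincides with $\tau_{i,i+1}(A)$ (equivalently, with the raising map $R$ by Lemma \ref{lemma:RA}). The only bookkeeping item worth mentioning is the translation between $i + j \leq d$ and $i < d/2$, which is transparent.
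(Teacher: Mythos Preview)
Your proof is correct and is exactly the intended derivation: the paper states this corollary immediately after Lemma~\ref{lemma:6.5} with no explicit proof, and the implicit argument is precisely to take $j=i+1$ so that $\tau_{i,i+1}(A)=A-\theta_i I$ and then read off injectivity from the condition $i+(i+1)\leq d$.
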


\section{The second split decomposition of $V$}\label{section:Udd}

We continue to discuss the TD system $\Phi$ from Definition \ref{def:TDsys}.   
Since $\Phi^{\Downarrow}$ is a TD system on $V,$ all the results from Section \ref{section:U} apply to it.  For later use, we now emphasize a few of these results.
By definition,
\begin{equation}
U_i^{\Downarrow}=(E_0^*V+E_1^*V +\cdots + E_i^* V)\cap (E_0V+E_1V+\cdots + E_{d-i} V)\label{eq:Udd}
\end{equation}
for $0\leq i \leq d$. 
Applying Theorem \ref{thm:U} to $\Phi^{\Downarrow}$ we obtain the following facts.  
The subspaces $\{U_i^{\Downarrow}\}_{i=0}^d$ form a decomposition of $V$ which we call the {\it second split decomposition} of $V$.  
We also have that
\begin{align*}
&(A-\theta_{d-i} I)U_i^{\Downarrow}\subseteq U_{i+1}^{\Downarrow}&(0\leq i\leq d-1 ), \qquad &(A-\theta_0 I)U_d^{\Downarrow}=0,\\
&(A^*-\theta_i^* I)U_i^{\Downarrow}\subseteq U_{i-1}^{\Downarrow}&(1\leq i\leq d ), \qquad &(A^*-\theta_0^* I)U_0^{\Downarrow}=0.
\end{align*}
In addition, for $0\leq i \leq d$ both
\begin{align*}
U_i^{\Downarrow}+U_{i+1}^{\Downarrow}+\cdots + U_d^{\Downarrow}&=E_0V+E_{1}V+\cdots + E_{d-i}V,\\
U_0^{\Downarrow}+U_{1}^{\Downarrow}+\cdots + U_i^{\Downarrow} &=E_0^*V+E_{1}^*V+\cdots + E_i^*V.
\end{align*}


\begin{lemma}\label{lemma:sumUUdd}
For $0\leq i \leq d$, 
\begin{eqnarray*}
U_0+U_1+\cdots +U_i=U_0^{\Downarrow}+U_1^{\Downarrow}+\cdots +U_i^{\Downarrow}.
\end{eqnarray*}
\end{lemma}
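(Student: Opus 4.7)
The plan is to show both sums equal the same partial sum $E_0^*V+E_1^*V+\cdots+E_i^*V$ of eigenspaces of $A^*$, by invoking Theorem \ref{thm:U}(iii) twice: once for $\Phi$ itself and once for its second inversion $\Phi^\Downarrow$.

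First, apply Theorem \ref{thm:U}(iii) directly to $\Phi$. This gives
\[
U_0+U_1+\cdots+U_i = E_0^*V+E_1^*V+\cdots+E_i^*V
\]
for $0\leq i\leq d$. Next, I would apply the same theorem to the TD system $\Phi^\Downarrow$. The key observation here is that $\Phi^\Downarrow = (A;\{E_{d-i}\}_{i=0}^d;A^*;\{E_i^*\}_{i=0}^d)$ differs from $\Phi$ only in the ordering of the primitive idempotents of $A$; the ordering of the primitive idempotents of $A^*$ is unchanged, i.e.\ $(E_i^*)^\Downarrow = E_i^*$. Therefore Theorem \ref{thm:U}(iii) applied to $\Phi^\Downarrow$ yields
\[
U_0^\Downarrow+U_1^\Downarrow+\cdots+U_i^\Downarrow = E_0^*V+E_1^*V+\cdots+E_i^*V
\]
for $0\leq i\leq d$. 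Comparing the two identities gives the claim.

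There is no real obstacle: the statement is essentially built into Theorem \ref{thm:U}(iii) once one notices that the second split decomposition has the same "upward" $A^*$-flag as the first. The only thing worth double-checking is the convention that $(E_i^*)^\Downarrow = E_i^*$, which is immediate from the definition of $\Phi^\Downarrow$ in the excerpt.
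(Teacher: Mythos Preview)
Your proof is correct and follows exactly the same approach as the paper: both sides are shown to equal $E_0^*V+E_1^*V+\cdots+E_i^*V$ by applying Theorem~\ref{thm:U}(iii) to $\Phi$ and to $\Phi^{\Downarrow}$, using that $(E_i^*)^{\Downarrow}=E_i^*$. The paper's proof is just the one-line version of what you wrote.
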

\begin{proof}
Both sides equal $E_0^*V+E_1^*V+\cdots + E_i^*V$ by Theorem \ref{thm:U}(iii).
\end{proof}

\medskip

We now make some comments concerning $\{F_i^{\Downarrow}\}_{i=0}^d$ and $R^{\Downarrow}$.
For $0\leq i\leq d$, $F_i^{\Downarrow}$ is the projection of $V$ onto $U_i^{\Downarrow}$. 
Observe that
\begin{eqnarray}
R^{\Downarrow} = A-\sum_{h=0}^{d}\theta_{d-h} F^{\Downarrow}_h.\label{eq:defRdd}
\end{eqnarray}
For $0\leq i\leq j\leq d+1$, the action of $\left(R^{\Downarrow}\right)^{j-i}$ on $U_i^{\Downarrow}$ agrees with the action of $\eta_{ij}(A)$ on $U_i^{\Downarrow}$.
In addition, 
\begin{eqnarray*}
R^{\Downarrow}U_i^{\Downarrow}\subseteq U_{i+1}^{\Downarrow} \qquad (0\leq i\leq d-1), \qquad R^{\Downarrow}U_d^{\Downarrow}=0. \label{RUdd}
\end{eqnarray*}

\section{The projections $F_i$, $F_i^{\Downarrow}$}

We continue to discuss the TD system $\Phi$ from Definition \ref{def:TDsys}.
In this section, we consider how the maps $\{F_i\}_{i=0}^d$ and $\{F_i^{\Downarrow}\}_{i=0}^d$ interact.  
In \cite[Section 5]{Somealg}, there are a number of results concerning how the maps $\{E_i\}_{i=0}^d$ and $\{F_i\}_{i=0}^d$ interact.  The results given in this section are reformulations of these results.

\medskip
\begin{lemma}\label{lemma:projprod1}
For $0\leq i < j\leq d$ both
\begin{eqnarray}
F_jF_i^{\Downarrow}=0, \qquad\qquad F_j^{\Downarrow}F_i=0.\label{eq:projprod1}
\end{eqnarray}
\end{lemma}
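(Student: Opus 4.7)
The plan is to reduce both identities to Lemma \ref{lemma:sumUUdd} together with the defining projection properties in Definition \ref{def:F_i}. The point is that although the decompositions $\{U_k\}_{k=0}^d$ and $\{U_k^{\Downarrow}\}_{k=0}^d$ are in general distinct, Lemma \ref{lemma:sumUUdd} tells us that the \emph{partial} sums $U_0+U_1+\cdots+U_i$ and $U_0^{\Downarrow}+U_1^{\Downarrow}+\cdots+U_i^{\Downarrow}$ agree for every $i$, since both equal $E_0^*V+E_1^*V+\cdots+E_i^*V$.

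For the first identity $F_jF_i^{\Downarrow}=0$ with $i<j$, I would observe that the image of $F_i^{\Downarrow}$ is $U_i^{\Downarrow}$, and since $i\leq j-1$,
\begin{equation*}
U_i^{\Downarrow}\subseteq U_0^{\Downarrow}+U_1^{\Downarrow}+\cdots+U_{j-1}^{\Downarrow} = U_0+U_1+\cdots+U_{j-1}
\end{equation*}
by Lemma \ref{lemma:sumUUdd}. By (\ref{eq:F_i2}), $F_j$ annihilates $U_k$ for every $k\neq j$, in particular for $0\leq k\leq j-1$, so $F_j$ kills the right-hand side and hence $F_jF_i^{\Downarrow}=0$. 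The second identity $F_j^{\Downarrow}F_i=0$ follows by the symmetric argument: the image of $F_i$ is $U_i\subseteq U_0+\cdots+U_{j-1}=U_0^{\Downarrow}+\cdots+U_{j-1}^{\Downarrow}$, and $F_j^{\Downarrow}$ annihilates each $U_k^{\Downarrow}$ with $k\neq j$ by applying (\ref{eq:F_i2}) to the TD system $\Phi^{\Downarrow}$.

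There is really no obstacle here; the only thing to be careful about is the inequality $i<j$ (strict) being exactly what is needed to land the image of one projection inside the kernel of the other, via the matching partial-sum identity. The proof is short, essentially a two-line consequence of Lemma \ref{lemma:sumUUdd}, so I would present it in that compact form without any calculation.
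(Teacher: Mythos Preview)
Your proposal is correct and follows essentially the same approach as the paper: both arguments use that the image of $F_i^{\Downarrow}$ (resp.\ $F_i$) is $U_i^{\Downarrow}$ (resp.\ $U_i$), invoke Lemma~\ref{lemma:sumUUdd} to pass between the partial sums of the two split decompositions, and then apply the annihilation property~(\ref{eq:F_i2}) together with $i<j$. The only cosmetic difference is that the paper passes through $U_0^{\Downarrow}+\cdots+U_i^{\Downarrow}$ rather than $U_0^{\Downarrow}+\cdots+U_{j-1}^{\Downarrow}$, which is immaterial.
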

\begin{proof}
We first verify the equation on the left in (\ref{eq:projprod1}).  By Definition \ref{def:F_i} and Lemma \ref{lemma:sumUUdd},
\begin{align}
F_jF_i^{\Downarrow}V &=F_jU_i^{\Downarrow}\nonumber\\
&\subseteq F_j(U_0^{\Downarrow}+U_1^{\Downarrow}+\cdots +U_i^{\Downarrow})\nonumber\\
&= F_j\left( U_0+U_1+\cdots+U_i\right).\label{eq:projprod2}
\end{align}
Since $i<j$, it follows from (\ref{eq:F_i2}) that (\ref{eq:projprod2}) equals 0.  So $F_jF_i^{\Downarrow}$ vanishes on $V$.

The proof for the equation on the right in (\ref{eq:projprod1}) is similar.
\end{proof}

\begin{lemma}\label{lemma:projprod2}
For $0\leq i\leq d$ both
\begin{align}
F_iF_i^{\Downarrow}F_i&=F_i,\label{eq:projprod2-1}\\
F_i^{\Downarrow}F_iF_i^{\Downarrow}&=F_i^{\Downarrow}.\label{eq:projprod2-2}
\end{align}
\end{lemma}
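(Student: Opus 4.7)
The plan is to reduce both identities to an elementary statement about the two decompositions, exploiting Lemma \ref{lemma:sumUUdd} which says $U_0+\cdots+U_i = U_0^{\Downarrow}+\cdots+U_i^{\Downarrow}$ for every $i$.

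For \eqref{eq:projprod2-1}, since $V=\sum_j U_j$ and $F_i$ is zero on $U_j$ for $j\neq i$, it suffices to verify that $F_iF_i^{\Downarrow}$ acts as the identity on $U_i$. Given $u\in U_i$, apply Lemma \ref{lemma:sumUUdd} to get
\begin{equation*}
u\in U_0+U_1+\cdots+U_i = U_0^{\Downarrow}+U_1^{\Downarrow}+\cdots+U_i^{\Downarrow},
\end{equation*}
so we may write $u=v_0+v_1+\cdots+v_i$ with $v_j\in U_j^{\Downarrow}$. By Definition \ref{def:F_i} applied to $\Phi^{\Downarrow}$, we have $F_i^{\Downarrow}u=v_i$. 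Now $u-v_i = v_0+\cdots+v_{i-1} \in U_0^{\Downarrow}+\cdots+U_{i-1}^{\Downarrow}$, and applying Lemma \ref{lemma:sumUUdd} a second time shows this lies in $U_0+U_1+\cdots+U_{i-1}$, where $F_i$ vanishes. Hence $F_i v_i = F_i u = u$, giving $F_iF_i^{\Downarrow}u=u$ as desired.

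For \eqref{eq:projprod2-2} the argument is entirely analogous with the roles of the two split decompositions swapped. Given $u\in U_i^{\Downarrow}$, use Lemma \ref{lemma:sumUUdd} to write $u=w_0+w_1+\cdots+w_i$ with $w_j\in U_j$; then $F_iu=w_i$ and $u-w_i\in U_0+\cdots+U_{i-1}=U_0^{\Downarrow}+\cdots+U_{i-1}^{\Downarrow}$, which is in the kernel of $F_i^{\Downarrow}$. Conclude $F_i^{\Downarrow}F_iu = F_i^{\Downarrow}w_i = u$ and hence $F_i^{\Downarrow}F_iF_i^{\Downarrow}=F_i^{\Downarrow}$.

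There is really no serious obstacle: the two identities are the formal reflection of the fact that each of $\{U_i\}_{i=0}^d$ and $\{U_i^{\Downarrow}\}_{i=0}^d$ is a decomposition of $V$ and that Lemma \ref{lemma:sumUUdd} ties their partial sums together. The only thing to watch is bookkeeping on the indices, so that the ``leftover'' term after projecting one way lands precisely in the kernel of the other projection. As an alternative (slightly slicker) route, one can instead start from $F_i=F_i\cdot I\cdot F_i = \sum_j F_iF_j^{\Downarrow}F_i$, expand using Lemma \ref{lemma:projprod1} to kill every summand except $j=i$, and similarly for the second identity; this gives the same result without explicit reference to vectors.
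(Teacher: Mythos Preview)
Your proof is correct. Your main argument is a direct, elementwise approach using Lemma~\ref{lemma:sumUUdd}: you show first that $F_iF_i^{\Downarrow}$ acts as the identity on $U_i$, and then read off \eqref{eq:projprod2-1}. The paper takes a different route---it inserts the resolution of the identity $I=\sum_h F_h^{\Downarrow}$ into $F_i=F_iF_i$ and then invokes Lemma~\ref{lemma:projprod1} to annihilate all summands with $h\neq i$; this is exactly the ``slicker'' alternative you describe at the end. Your elementwise argument has the side benefit of directly establishing that $F_iF_i^{\Downarrow}|_{U_i}=\mathrm{id}$ and $F_i^{\Downarrow}F_i|_{U_i^{\Downarrow}}=\mathrm{id}$, which the paper obtains separately (Lemma~\ref{lemma:F-inv}) as a consequence of the present lemma; conversely, the paper's approach is shorter and avoids choosing vectors.
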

\begin{proof}
We first show (\ref{eq:projprod2-1}).  By Lemma \ref{lemma:F} and Lemma \ref{lemma:projprod1},
\begin{equation*}
F_i=F_iF_i=F_i\left(\sum_{h=0}^d F_h^{\Downarrow}\right)F_i
=F_iF_i^{\Downarrow}F_i.
\end{equation*}

The proof of (\ref{eq:projprod2-2}) is similar.  
\end{proof}

\begin{lemma}\label{lemma:F-inv}
For $0\leq i\leq d$ the restrictions
\begin{equation*}
F_i^{\Downarrow}|_{U_i}:U_i\to U_i^{\Downarrow}, \qquad \qquad 
F_i|_{U_i^{\Downarrow}}:U_i^{\Downarrow}\to U_i\\
\end{equation*}
are bijections.  Moreover, these bijections are inverses.
\end{lemma}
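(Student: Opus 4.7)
The plan is to deduce the lemma directly from Lemma \ref{lemma:projprod2}. First observe that, by Definition \ref{def:F_i}, the image of $F_i^{\Downarrow}$ is contained in $U_i^{\Downarrow}$ and the image of $F_i$ is contained in $U_i$; so the restrictions $F_i^{\Downarrow}|_{U_i}$ and $F_i|_{U_i^{\Downarrow}}$ really do land in $U_i^{\Downarrow}$ and $U_i$ respectively, and the compositions $F_i\circ F_i^{\Downarrow}|_{U_i}$ and $F_i^{\Downarrow}\circ F_i|_{U_i^{\Downarrow}}$ are endomorphisms of $U_i$ and $U_i^{\Downarrow}$.

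Next, for any $v\in U_i$ we have $F_i v = v$ by (\ref{eq:F_i1}), so applying (\ref{eq:projprod2-1}) gives
\begin{equation*}
(F_i\circ F_i^{\Downarrow})(v) = F_i F_i^{\Downarrow} F_i v = F_i v = v.
\end{equation*}
Thus $F_i\circ F_i^{\Downarrow}$ acts as the identity on $U_i$. By symmetric use of (\ref{eq:projprod2-2}) and the fact that $F_i^{\Downarrow} w = w$ for $w\in U_i^{\Downarrow}$, we obtain that $F_i^{\Downarrow}\circ F_i$ acts as the identity on $U_i^{\Downarrow}$.

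Consequently, $F_i^{\Downarrow}|_{U_i}: U_i\to U_i^{\Downarrow}$ and $F_i|_{U_i^{\Downarrow}}: U_i^{\Downarrow}\to U_i$ are mutually inverse, hence both are bijections. There is no real obstacle here; the lemma is essentially an immediate consequence of the two identities in Lemma \ref{lemma:projprod2} combined with the defining property (\ref{eq:F_i1}) of the projections.
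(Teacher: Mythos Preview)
Your proof is correct and is essentially the same as the paper's: both use (\ref{eq:F_i1}) together with (\ref{eq:projprod2-1}) to show $F_iF_i^{\Downarrow}$ acts as the identity on $U_i$, then argue symmetrically for $F_i^{\Downarrow}F_i$ on $U_i^{\Downarrow}$. Your added remark that the restrictions land in the correct target spaces is a nice explicit check that the paper leaves implicit.
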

\begin{proof}
We first show that the map $F_iF_i^{\Downarrow}$ acts as the identity on $U_i$.  Let $v\in U_i$.  By (\ref{eq:F_i1}) and (\ref{eq:projprod2-1}),
\begin{equation*}
F_iF_i^{\Downarrow}v=F_iF_i^{\Downarrow}F_iv
=F_iv
=v.
\end{equation*}
We have shown $F_iF_i^{\Downarrow}$ acts as the identity on $U_i$.  
One can show similarly that $F_i^{\Downarrow}F_i$ acts as the identity on $U_i^{\Downarrow}$.  The result follows.
\end{proof}

\begin{lemma}{\rm \cite[Lemma 6.4]{Somealg} }\label{lemma:FR}
We have
\begin{enumerate}
\item[{\rm (i)}]
$RF_i=F_{i+1}R\qquad\qquad (0\leq i\leq d-1),\qquad RF_d=0,\qquad F_0R=0.$
\item[{\rm (ii)}]
$LF_i=F_{i-1}L\qquad\qquad (1\leq i\leq d),\qquad LF_0=0,\qquad F_dL=0.
$\
\end{enumerate}
\end{lemma}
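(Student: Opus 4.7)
The plan is to verify both identities by computing the action of each side on a vector in each piece $U_j$ of the first split decomposition, using Lemma \ref{lemma:RULU} for how $R$ and $L$ move between the pieces, and Definition \ref{def:F_i} for how $F_i$ picks out the $i$-th piece.

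For part (i), fix $j$ with $0 \leq j \leq d$ and let $v \in U_j$. I would compute $RF_iv$ and $F_{i+1}Rv$ and compare them. On the one hand, by Definition \ref{def:F_i}, $F_iv = v$ if $j=i$ and $F_iv = 0$ otherwise, so $RF_iv = Rv$ if $j=i$ and $RF_iv = 0$ otherwise. On the other hand, by Lemma \ref{lemma:RULU}, $Rv \in U_{j+1}$ when $j \leq d-1$ and $Rv = 0$ when $j=d$; applying $F_{i+1}$ then gives $F_{i+1}Rv = Rv$ exactly when $j+1 = i+1$ (i.e.\ $j=i$), and $0$ otherwise. This matches $RF_iv$ on every $U_j$, so $RF_i = F_{i+1}R$ on $V$ for $0 \leq i \leq d-1$. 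The remaining identities in (i) are then immediate: for $RF_d$, only $v \in U_d$ contributes, and on $U_d$ we have $R=0$; for $F_0 R$, any $Rv$ lies in $U_{j+1}$ with $j+1 \geq 1$ (or is zero), hence has zero $U_0$-component.

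Part (ii) is entirely parallel, using the second half of Lemma \ref{lemma:RULU}, namely $LU_i \subseteq U_{i-1}$ and $LU_0 = 0$, in place of the corresponding statements for $R$. The same pointwise computation on each $U_j$ yields $LF_i = F_{i-1}L$ for $1\leq i\leq d$, as well as $LF_0=0$ and $F_dL=0$.

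I do not anticipate a real obstacle here: everything reduces to tracking where $R$ (resp.\ $L$) sends each summand $U_j$ and comparing with the projection property $F_iU_j = \delta_{ij}U_j$. The only thing to be careful about is the edge cases $j=d$ for $R$ and $j=0$ for $L$, which are precisely what produce the boundary identities $RF_d=0$, $F_0R=0$, $LF_0=0$, $F_dL=0$.
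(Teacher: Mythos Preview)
Your argument is correct and is exactly the natural one: check the identities on each summand $U_j$ of the first split decomposition using Lemma~\ref{lemma:RULU} and Definition~\ref{def:F_i}. The paper itself does not supply a proof of this lemma---it is simply quoted from \cite[Lemma~6.4]{Somealg}---so there is nothing further to compare; your write-up would serve perfectly well as a self-contained verification.
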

\bigskip

\begin{lemma}\label{lemma:RRddF}
For $0\leq i\leq d-1$,
\begin{equation*}
R^{\Downarrow}F_i^{\Downarrow}F_i=F_{i+1}^{\Downarrow}F_{i+1}R.
\end{equation*}
\end{lemma}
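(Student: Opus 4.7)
The plan is to show that both sides agree after a simple manipulation using the intertwining relations from Lemma \ref{lemma:FR}, and then exploit the orthogonality relations of Lemma \ref{lemma:projprod1}.

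First, I would apply Lemma \ref{lemma:FR}(i) to $R$, giving $F_{i+1}R = RF_i$, and the analog of Lemma \ref{lemma:FR}(i) for $\Phi^{\Downarrow}$, which gives $R^{\Downarrow}F_i^{\Downarrow} = F_{i+1}^{\Downarrow}R^{\Downarrow}$. Using these, I would rewrite
\begin{equation*}
R^{\Downarrow}F_i^{\Downarrow}F_i = F_{i+1}^{\Downarrow}R^{\Downarrow}F_i, \qquad F_{i+1}^{\Downarrow}F_{i+1}R = F_{i+1}^{\Downarrow}RF_i,
\end{equation*}
so the desired identity is equivalent to $F_{i+1}^{\Downarrow}(R^{\Downarrow}-R)F_i = 0$.

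Next, I would expand $R^{\Downarrow}-R$ using Definition \ref{def:R} and (\ref{eq:defRdd}), obtaining
\begin{equation*}
R^{\Downarrow}-R = \sum_{h=0}^{d}\bigl(\theta_h F_h - \theta_{d-h}F_h^{\Downarrow}\bigr).
\end{equation*}
Sandwiching with $F_{i+1}^{\Downarrow}$ and $F_i$, the $\theta_h F_h$ sum collapses via (\ref{eq:FF}) to $\theta_i F_{i+1}^{\Downarrow}F_i$, while the $\theta_{d-h}F_h^{\Downarrow}$ sum collapses via the $\Phi^{\Downarrow}$-analog of (\ref{eq:FF}) to $\theta_{d-i-1}F_{i+1}^{\Downarrow}F_i$. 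Both collapsed terms contain the factor $F_{i+1}^{\Downarrow}F_i$, which is $0$ by Lemma \ref{lemma:projprod1} (since $i+1 > i$). Hence $F_{i+1}^{\Downarrow}(R^{\Downarrow}-R)F_i = 0$, completing the proof.

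I do not expect a real obstacle here: once one notices that Lemma \ref{lemma:FR} converts the projections flanking $R$ from one side of the equation to match those on the other, the remainder is a direct computation that leans on the basic orthogonality $F_{i+1}^{\Downarrow}F_i = 0$ and the idempotent relations. The one thing to keep straight is the order of operations and the indices on the $\Phi^{\Downarrow}$-side, since eigenvalues there are indexed by $d-h$ rather than $h$.
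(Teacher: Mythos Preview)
Your proof is correct and follows essentially the same route as the paper: both reduce the identity via Lemma~\ref{lemma:FR}(i) (and its $\Phi^{\Downarrow}$-analog) to showing $F_{i+1}^{\Downarrow}(R^{\Downarrow}-R)F_i=0$, then expand $R^{\Downarrow}-R$ from Definition~\ref{def:R} and (\ref{eq:defRdd}) and collapse using (\ref{eq:FF}) and Lemma~\ref{lemma:projprod1}. Your write-up is in fact slightly more explicit about how the two sums collapse to multiples of $F_{i+1}^{\Downarrow}F_i$ before that factor is set to zero.
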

\begin{proof}
We show $R^{\Downarrow}F_i^{\Downarrow}F_i-F_{i+1}^{\Downarrow}F_{i+1}R=0$.  
By Lemma \ref{lemma:FR}(i) (applied to both $\Phi$ and $\Phi^{\Downarrow}$), 
\begin{align}
R^{\Downarrow}F_i^{\Downarrow}F_i-F_{i+1}^{\Downarrow}F_{i+1}R&=F_{i+1}^{\Downarrow}R^{\Downarrow}F_i-F_{i+1}^{\Downarrow}RF_{i}\nonumber\\
&=F_{i+1}^{\Downarrow}\left(R^{\Downarrow}-R\right)F_i.\label{eq:RRddF1}
\end{align}
By Definition \ref{def:R},
\begin{equation}
R^{\Downarrow}-R=\sum_{h=0}^d\theta_hF_h-\sum_{h=0}^d\theta_{d-h}F_h^\Downarrow.\label{eq:RRddF2}
\end{equation}
Eliminate $R^{\Downarrow}-R$ in (\ref{eq:RRddF1}) using (\ref{eq:RRddF2}).  Simplify the resulting expression using (\ref{eq:FF}) (applied to both $\Phi$ and $\Phi^{\Downarrow}$) and Lemma \ref{lemma:projprod1} to get 0.  
\end{proof}

\section{The subspaces $K_i$ }

We continue to discuss the TD system $\Phi$ from Definition \ref{def:TDsys}.
Shortly we will define the linear transformation $\Psi$.  In our discussion of $\Psi$, it will be useful to consider a certain refinement of the first and second split decomposition of $V$.  This refinement was introduced in \cite{refinement}.  
In order to describe this refinement, we introduce a sequence of subspaces $\{K_i\}_{i=0}^r$, where $r=\lfloor d/2\rfloor$.\\

\begin{definition}\label{def:K}{\rm
For $0\leq i\leq d/2$, define the subspace $K_i\subseteq V$ by
\begin{equation*}
K_i=(E_0^*V+E_1^*V+\cdots+E_i^*V)\cap (E_iV+E_{i+1}V+\cdots+E_{d-i}V).
\end{equation*}
Observe that $K_0=E_0^*V=U_0$.}
\end{definition}


\begin{lemma}\label{lemma:K} 
We have 
\begin{equation*}
K_i=U_i\cap U_i^{\Downarrow}\qquad\qquad \qquad(0\leq i\leq d/2).
\end{equation*}
\end{lemma}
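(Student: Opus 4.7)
The plan is to unpack the three definitions and observe that all three sets share the common first factor $E_0^*V+E_1^*V+\cdots+E_i^*V$, so that the identity $K_i=U_i\cap U_i^{\Downarrow}$ reduces to a statement about the second factors inside the direct sum $V=\bigoplus_{j=0}^d E_jV$.

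More explicitly, I would abbreviate $W=E_0^*V+E_1^*V+\cdots+E_i^*V$, $B=E_iV+E_{i+1}V+\cdots+E_dV$, and $C=E_0V+E_1V+\cdots+E_{d-i}V$. Then by Definition \ref{def:U} and by (\ref{eq:Udd}),
\[
U_i=W\cap B,\qquad U_i^{\Downarrow}=W\cap C,
\]
whence
\[
U_i\cap U_i^{\Downarrow}=(W\cap B)\cap(W\cap C)=W\cap(B\cap C).
\]
The key computational step is then to show that $B\cap C=E_iV+E_{i+1}V+\cdots+E_{d-i}V$. This follows immediately from the fact that $V=\bigoplus_{j=0}^d E_jV$ is a direct sum: writing any $v\in B\cap C$ uniquely as $v=\sum_{j=0}^d E_jv$, the condition $v\in B$ forces the components with $j<i$ to vanish, and the condition $v\in C$ forces the components with $j>d-i$ to vanish, so $v\in E_iV+\cdots+E_{d-i}V$. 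The reverse containment is clear from the definitions of $B$ and $C$. Combining this with the display above and Definition \ref{def:K} yields $U_i\cap U_i^{\Downarrow}=W\cap(E_iV+\cdots+E_{d-i}V)=K_i$.

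There is essentially no obstacle here; the whole argument is a direct verification from the definitions, and the only substantive input is that the eigenspace decomposition $V=\bigoplus_{j=0}^d E_jV$ is direct, which is part of the standing setup for $\Phi$. Thus the proof fits in a few lines.
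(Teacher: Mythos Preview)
Your proof is correct and is precisely the direct verification the paper intends; the paper's own proof is just the one-line instruction ``Use (\ref{eq:Udd}), Definition \ref{def:U}, and Definition \ref{def:K},'' and your argument is the natural unpacking of that instruction.
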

\begin{proof}
Use (\ref{eq:Udd}), Definition \ref{def:U}, and Definition \ref{def:K}.
\end{proof}

\medskip

\begin{lemma} {\rm \cite[Lemma 4.1(iii)]{refinement}} \label{lemma:K2}
For $0\leq i\leq d/2$, 
the restriction of $\tau_{i,d-i+1}(A)$ to $U_i$ has kernel $K_i$.  
\end{lemma}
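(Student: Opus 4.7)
My plan is to combine Lemma \ref{lemma:taukernel} (which identifies the kernel of $\tau_{ij}(A)$ on all of $V$) with the subspace containments recorded in Theorem \ref{thm:U}(iii). Specifically, for the polynomial $\tau_{i,d-i+1}$ appearing here, Lemma \ref{lemma:taukernel} gives
\[
\ker \tau_{i,d-i+1}(A) \;=\; E_iV+E_{i+1}V+\cdots+E_{d-i}V.
\]
Intersecting both sides with $U_i$ yields
\[
\ker\bigl(\tau_{i,d-i+1}(A)\big|_{U_i}\bigr) \;=\; U_i \cap \bigl(E_iV+E_{i+1}V+\cdots+E_{d-i}V\bigr),
\]
so everything reduces to showing that this intersection equals $K_i$.

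For the containment $\subseteq$, I would use that by Theorem \ref{thm:U}(iii), $U_i \subseteq E_0^*V+E_1^*V+\cdots+E_i^*V$. Hence any vector in $U_i \cap (E_iV+\cdots+E_{d-i}V)$ lies in both $E_0^*V+\cdots+E_i^*V$ and $E_iV+\cdots+E_{d-i}V$, and thus lies in $K_i$ by Definition \ref{def:K}. For the reverse containment $\supseteq$, it suffices to check that $K_i \subseteq U_i$: from the defining intersection in Definition \ref{def:K} we have $K_i \subseteq E_iV+E_{i+1}V+\cdots+E_{d-i}V \subseteq E_iV+E_{i+1}V+\cdots+E_dV$, and $K_i \subseteq E_0^*V+E_1^*V+\cdots+E_i^*V$, so by Definition \ref{def:U} indeed $K_i \subseteq U_i$; the inclusion $K_i \subseteq E_iV+\cdots+E_{d-i}V$ is immediate from the definition. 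This completes both inclusions.

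I do not expect a real obstacle here — the statement is essentially an unpacking of definitions once one knows the kernel description of Lemma \ref{lemma:taukernel}. The only point to be careful about is keeping the two split-decomposition containments in Theorem \ref{thm:U}(iii) straight (the one-sided containment $U_i \subseteq E_0^*V+\cdots+E_i^*V$ is what makes the $E_0^*V+\cdots+E_i^*V$ factor in the definition of $K_i$ redundant once we restrict to $U_i$), and to note that we shrink $E_iV+\cdots+E_dV$ to $E_iV+\cdots+E_{d-i}V$ precisely by intersecting with the kernel of $\tau_{i,d-i+1}(A)$.
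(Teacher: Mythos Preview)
Your argument is correct and matches the paper's own proof, which simply says ``Use Lemma \ref{lemma:taukernel} and Definition \ref{def:U}.'' The one minor simplification: the containment $U_i \subseteq E_0^*V+\cdots+E_i^*V$ that you extract from Theorem \ref{thm:U}(iii) is already immediate from Definition \ref{def:U} itself, so the appeal to Theorem \ref{thm:U}(iii) is not needed.
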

\begin{proof}  Use Lemma \ref{lemma:taukernel} and Definition \ref{def:U}. 
\end{proof}

\medskip
\medskip

We now consider the spaces
\begin{equation*}
\tau_{ij}(A)K_i
\end{equation*}
where $0\leq i\leq d/2$ and  $i\leq j\leq d-i$. 
We start with an observation.

\medskip


\begin{lemma}{\rm \cite[Lemma 4.1(vi)]{refinement}} \label{lemma:tauK}
For $0\leq i\leq d/2$ and $i\leq j\leq d-i$, the linear transformation
\begin{align*}
K_i&\to \tau_{ij} (A)K_i\\
v&\mapsto  \tau_{ij}(A)v
\end{align*}
is a bijection.
\end{lemma}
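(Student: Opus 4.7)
The plan is to extract the bijectivity directly from Lemma \ref{lemma:6.5} after restricting to the subspace $K_i$, so very little new work is needed.

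First I would note that surjectivity is free: by construction $\tau_{ij}(A)K_i$ is defined as the image, so the map $K_i\to\tau_{ij}(A)K_i$, $v\mapsto\tau_{ij}(A)v$, is surjective onto its codomain. The only substantive content is therefore injectivity.

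For injectivity, I would invoke Lemma \ref{lemma:K}, which gives $K_i\subseteq U_i\cap U_i^{\Downarrow}\subseteq U_i$. By Lemma \ref{lemma:tauU_i} we have $\tau_{ij}(A)U_i\subseteq U_j$, so the assignment $v\mapsto\tau_{ij}(A)v$ makes sense as a map $U_i\to U_j$ extending the map on $K_i$. Now the hypothesis $j\leq d-i$ means
\[
i+j\;\leq\;i+(d-i)\;=\;d,
\]
so Lemma \ref{lemma:6.5} applies and tells us that the extended map $U_i\to U_j$ is injective (it is a bijection in the boundary case $j=d-i$, but injectivity is all we need). Restricting an injective map to the subspace $K_i$ preserves injectivity, so the map $K_i\to\tau_{ij}(A)K_i$ is injective.

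Combining injectivity with the tautological surjectivity yields the desired bijection. There is no real obstacle; the statement is essentially a corollary of Lemma \ref{lemma:6.5} combined with the inclusion $K_i\subseteq U_i$ of Lemma \ref{lemma:K}. The only point that warrants attention is checking the boundary values $j=i$ (where $\tau_{ii}=1$ and the map is the identity) and $j=d-i$ (where Lemma \ref{lemma:6.5} gives a bijection on all of $U_i$), both of which are consistent with the claim.
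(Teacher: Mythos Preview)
Your proof is correct and follows essentially the same approach as the paper: surjectivity is by construction, and injectivity follows from Lemma~\ref{lemma:6.5} together with the inclusion $K_i\subseteq U_i$. The paper's own proof is simply a terser version of your argument.
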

\begin{proof}
By construction the map is surjective.  
By Lemma \ref{lemma:6.5} the restriction of $\tau_{ij}(A)$ to $K_i$ is injective. 
The result follows.
\end{proof}

\medskip

From Lemma \ref{lemma:tauK}, we draw two corollaries.
 
 \medskip
 
\begin{cor}
For $0\leq i\leq d/2$ and $i\leq j\leq k \leq d-i$, the linear transformation
\begin{align*}
\tau_{ij}(A)K_i&\to \tau_{ik} (A)K_i\\
v&\mapsto  \tau_{jk}(A)v
\end{align*}
is a bijection.
\end{cor}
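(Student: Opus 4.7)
The plan is to exhibit the map in question as the second factor in a factorization of a known bijection, so that bijectivity follows for purely formal reasons from Lemma \ref{lemma:tauK}.

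First I would verify that the map is well-defined. Given $v=\tau_{ij}(A)w$ with $w\in K_i$, the polynomial identity $\tau_{ij}\tau_{jk}=\tau_{ik}$ from (\ref{eq:taumult}) yields
\begin{equation*}
\tau_{jk}(A)v=\tau_{jk}(A)\tau_{ij}(A)w=\tau_{ik}(A)w\in \tau_{ik}(A)K_i,
\end{equation*}
so the target really lies in $\tau_{ik}(A)K_i$. The hypotheses $i\leq j\leq k\leq d-i$ guarantee that both $j$ and $k$ lie in the range $[i,d-i]$ required by Lemma \ref{lemma:tauK}.

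Next I would assemble the bijections. By Lemma \ref{lemma:tauK}, applied once with the index $j$ and once with the index $k$, the two maps
\begin{equation*}
\alpha\colon K_i\to \tau_{ij}(A)K_i,\ v\mapsto \tau_{ij}(A)v,\qquad \gamma\colon K_i\to \tau_{ik}(A)K_i,\ v\mapsto \tau_{ik}(A)v
\end{equation*}
are bijections. Let $\beta\colon \tau_{ij}(A)K_i\to \tau_{ik}(A)K_i$ be the map in question, $v\mapsto \tau_{jk}(A)v$. The computation above shows $\beta\circ\alpha=\gamma$.

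Since $\alpha$ and $\gamma$ are bijections, it follows immediately that $\beta$ is a bijection: $\beta=\gamma\circ\alpha^{-1}$. Concretely, surjectivity is clear (any $\tau_{ik}(A)w$ is the image of $\tau_{ij}(A)w$), and injectivity follows because $\beta(\tau_{ij}(A)w_1)=\beta(\tau_{ij}(A)w_2)$ forces $\tau_{ik}(A)w_1=\tau_{ik}(A)w_2$, whence $w_1=w_2$ by the injectivity half of Lemma \ref{lemma:tauK}, and therefore $\tau_{ij}(A)w_1=\tau_{ij}(A)w_2$. There is no genuine obstacle here; the only thing to be careful about is checking that the index $k$ still satisfies $i\leq k\leq d-i$, so that Lemma \ref{lemma:tauK} is legitimately applicable with $j$ replaced by $k$.
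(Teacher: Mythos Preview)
Your argument is correct and is precisely the approach the paper intends: it invokes Lemma \ref{lemma:tauK} together with the multiplicativity relation $\tau_{ij}\tau_{jk}=\tau_{ik}$ from (\ref{eq:taumult}) to factor the map through the known bijections. The paper's own proof is the one-line summary of exactly what you wrote out.
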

\begin{proof}
Use Lemma \ref{lemma:tauK} and the equation on the left in (\ref{eq:taumult}). 
\end{proof}


\begin{cor}
For $0\leq i\leq d/2$ and $i\leq j\leq d-i$, the dimension of $\tau_{ij}(A)K_i$ coincides with the dimension of $K_i$.
\end{cor}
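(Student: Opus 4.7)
The plan is to invoke Lemma \ref{lemma:tauK} directly. That lemma provides a bijective linear map from $K_i$ onto $\tau_{ij}(A)K_i$ given by $v\mapsto \tau_{ij}(A)v$, and a bijective linear transformation between finite-dimensional vector spaces preserves dimension. There is essentially no obstacle here; the corollary is an immediate consequence.

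More concretely, I would simply observe that by Lemma \ref{lemma:tauK} the assignment $v\mapsto \tau_{ij}(A)v$ is a $\K$-linear isomorphism from $K_i$ onto its image $\tau_{ij}(A)K_i$, and then conclude $\dim \tau_{ij}(A)K_i = \dim K_i$. The entire proof is a single sentence, and the only thing to verify is that Lemma \ref{lemma:tauK} genuinely applies in the stated range $0\leq i\leq d/2$ and $i\leq j\leq d-i$, which it does by construction. No auxiliary constructions, no case analysis, and no reference to the split decomposition machinery beyond Lemma \ref{lemma:tauK} itself are needed.
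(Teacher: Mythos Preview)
Your proposal is correct and matches the paper's approach exactly: the corollary is stated without proof in the paper, as an immediate consequence of Lemma~\ref{lemma:tauK}, which supplies a bijection $K_i \to \tau_{ij}(A)K_i$. There is nothing to add.
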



\section{Concerning the decomposition $\{U_i\}_{i=0}^d$}

We continue to discuss the TD system $\Phi$ from Definition \ref{def:TDsys}.  
Recall the first split decomposition $\{U_i\}_{i=0}^d$ of $V$ from Definition \ref{def:U}.  
We know that $K_0=U_0$ and $K_i\subseteq U_i$ for $1\leq i \leq d$.  
We will use this fact along with information about the raising map $R$ to give a decomposition of each $U_i$.\\

The following result is essentially due to K. Nomura \cite[Theorem 4.2]{refinement}.  We give an alternate proof.

\medskip

\begin{lemma} {\rm \cite[Theorem 4.2]{refinement}} \label{lemma:U1}
For $1\leq i\leq d/2$, each of the following sums is direct.
\begin{enumerate}
\item[{\rm (i)}] $\displaystyle U_{i}=K_{i}+RU_{i-1}$,
\item[{\rm (ii)}] $\displaystyle U_{i}=K_{i}+(A-\theta_{i-1} I)U_{i-1}$.
\end{enumerate}
\end{lemma}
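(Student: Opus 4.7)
My plan begins by observing that parts (i) and (ii) say the same thing: by Lemma \ref{lemma:RA}, the raising map $R$ coincides with $A - \theta_{i-1}I$ on $U_{i-1}$, so $RU_{i-1} = (A-\theta_{i-1}I)U_{i-1}$. Hence it suffices to prove (i).

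For (i), the easy containment $K_i + RU_{i-1} \subseteq U_i$ follows immediately from Lemma \ref{lemma:K} and Lemma \ref{lemma:RULU}. The remaining work is to establish equality and directness of the sum, and I would handle both through a dimension count combined with a short argument that $K_i \cap RU_{i-1} = 0$.

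For the dimensions, Lemma \ref{lemma:K2} identifies $K_i$ as the kernel of $\tau_{i,d-i+1}(A)|_{U_i}$. Since $i + (d-i+1) = d+1 \geq d$, Lemma \ref{lemma:6.5} makes this restriction a surjection onto $U_{d-i+1}$, so rank-nullity together with Lemma \ref{lemma:dim1} gives $\dim K_i = \rho_i - \rho_{d-i+1} = \rho_i - \rho_{i-1}$. On the other hand, $R|_{U_{i-1}} = \tau_{i-1,i}(A)|_{U_{i-1}}$, and since $(i-1) + i \leq d$ (using $i \leq d/2$), Lemma \ref{lemma:6.5} makes this map an injection into $U_i$, so $\dim RU_{i-1} = \rho_{i-1}$. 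These sum to $\rho_i = \dim U_i$.

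To finish, I would take $v \in K_i \cap RU_{i-1}$ and write $v = Rw$ for some $w \in U_{i-1}$. Then $\tau_{i,d-i+1}(A)Rw = 0$, and since $R$ agrees with $\tau_{i-1,i}(A)$ on $U_{i-1}$, identity (\ref{eq:taumult}) rewrites this as $\tau_{i-1,d-i+1}(A)w = 0$. Because $(i-1)+(d-i+1)=d$, Lemma \ref{lemma:6.5} says this restriction is a bijection $U_{i-1}\to U_{d-i+1}$, forcing $w = 0$ and hence $v = 0$. Combined with the dimension equality, this yields $U_i = K_i \oplus RU_{i-1}$. The only place I anticipate any subtlety is keeping straight the three index regimes in Lemma \ref{lemma:6.5} (injection, bijection, surjection), but each follows directly from $i \leq d/2$, so I expect no real obstacle.
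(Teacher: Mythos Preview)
Your proof is correct. The directness argument (showing $K_i \cap RU_{i-1} = 0$) is essentially identical to the paper's, modulo translating between powers of $R$ and the polynomials $\tau_{ij}(A)$ via Corollary~\ref{cor:Rtau}.

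Where you diverge is in establishing the equality $U_i = K_i + RU_{i-1}$. The paper does this constructively: given $v \in U_i$, it pushes $v$ forward by $R^{d-2i+1}$ into $U_{d-i+1}$, uses the surjectivity in Lemma~\ref{lemma:6.5} to find a preimage $w \in U_{i-1}$ under $R^{d-2i+2}$, and then checks that $v - Rw$ lands in $K_i$. You instead run a dimension count, computing $\dim K_i = \rho_i - \rho_{i-1}$ via rank--nullity (using the surjectivity case of Lemma~\ref{lemma:6.5} together with the symmetry $\rho_{d-i+1} = \rho_{i-1}$ from Lemma~\ref{lemma:dim1}) and $\dim RU_{i-1} = \rho_{i-1}$ via the injectivity case. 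This is a clean alternative; in effect you have inverted the paper's logic, since the paper derives $\dim K_i = \rho_i - \rho_{i-1}$ as a \emph{corollary} of this lemma (Corollary~7.3) rather than as an input. Your route is slightly more economical but relies on the shape symmetry from Lemma~\ref{lemma:dim1}, whereas the paper's explicit decomposition does not.
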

\begin{proof}
(i)  We first show that $\displaystyle U_{i}=K_{i}+RU_{i-1}$.  
By Lemma \ref{lemma:RULU} and Lemma \ref{lemma:K}, $\displaystyle U_i\supseteq K_{i}+RU_{i-1}$. 
We now show $\displaystyle U_i\subseteq K_{i}+RU_{i-1}$.  Let $v\in U_{i}$.  By Lemma \ref{lemma:RULU} we get $R^{d-2i+1} v\in U_{d-i+1}$.  
By Corollary \ref{cor:Rtau} and Lemma \ref{lemma:6.5} there exists $w\in U_{i-1}$ such that $R^{d-2i+2}w=R^{d-2i+1} v$.  Rearranging terms we obtain $R^{d-2i+1}(Rw- v)=0$.  So $Rw-v$ is in the kernel of $R^{d-2i+1}$.  By Lemma \ref{lemma:RULU}, $Rw-v\in U_i$.  
By Corollary \ref{cor:Rtau} and Lemma \ref{lemma:K2}, $K_i$ is the intersection of $U_i$ and the kernel of $R^{d-2i+1}$.  
By these comments $Rw-v\in K_i$.  Therefore
\begin{align*}
v&= -(Rw-v)+Rw\\
&\in  K_i+RU_{i-1}.
\end{align*}
Hence $\displaystyle U_{i}\subseteq K_{i}+RU_{i-1}$.  
We have shown $U_i=K_i+RU_{i-1}$.
We now show that this sum is direct.  Let $v\in K_i\cap RU_{i-1}$.  Since $v\in RU_{i-1}$, there exists $w\in U_{i-1}$ such that $v=Rw$.  
Recall $v\in K_i$ so $R^{d-2i+1}v=0$.  
Therefore $R^{d-2i+2}w=0$.  By Lemma \ref{lemma:6.5}, the restriction of $R^{d-2i+2}$ to $U_{i-1}$ is injective.  So $w=0$ and thus $v=0$.
We have shown that the sum $U_i=K_i+RU_{i-1}$ is direct.

(ii)  Use (i) and Lemma \ref{lemma:RA}.
\end{proof}

\medskip


From Lemma \ref{lemma:U1} we obtain the following two corollaries.

\medskip

\begin{cor}{\rm \cite[Corollary 6.6]{Somealg}}\label{cor:dim} 
With reference to Lemma \ref{lemma:dim1}, 
\begin{enumerate}
\item[{\rm (i)}] $\rho_i \leq \rho_{i+1}$ \ for \  $0\leq i <d/2$,
\item[{\rm (ii)}] $\rho_i \geq \rho_{i+1}$  \ for \  $d/2 \leq i \leq d-1$.
\end{enumerate}
\end{cor}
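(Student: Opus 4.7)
The plan is to extract a dimension inequality from Lemma \ref{lemma:U1}(i) and then bootstrap both (i) and (ii) using the symmetry $\rho_i=\rho_{d-i}$ supplied by Lemma \ref{lemma:dim1}.

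First I would fix an integer $i$ with $1\leq i\leq d/2$ and take dimensions in the direct sum $U_i=K_i\oplus RU_{i-1}$ provided by Lemma \ref{lemma:U1}(i). This gives $\rho_i=\dim K_i+\dim RU_{i-1}$. To identify $\dim RU_{i-1}$ with $\rho_{i-1}$, I would note that by Corollary \ref{cor:Rtau} the map $R$ restricted to $U_{i-1}$ agrees with $\tau_{i-1,i}(A)$, and Lemma \ref{lemma:6.5} asserts that $\tau_{i-1,i}(A)\colon U_{i-1}\to U_i$ is injective because $(i-1)+i\leq d$ holds under the assumption $i\leq d/2$. Therefore $\dim RU_{i-1}=\rho_{i-1}$, and we get
\[
\rho_i=\dim K_i+\rho_{i-1}\geq \rho_{i-1}\qquad (1\leq i\leq d/2).
\]

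Reindexing with $j=i-1$ gives $\rho_j\leq \rho_{j+1}$ for $0\leq j\leq \lfloor d/2\rfloor-1$. When $d$ is even this is all of (i). When $d$ is odd the only value of $j$ in (i) not yet covered is $j=(d-1)/2$, and in that case Lemma \ref{lemma:dim1} gives $\rho_{(d+1)/2}=\rho_{(d-1)/2}$, so the required inequality $\rho_j\leq \rho_{j+1}$ degenerates to equality. This establishes (i).

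Finally, part (ii) follows from (i) by the palindromic symmetry. Given $d/2\leq i\leq d-1$, Lemma \ref{lemma:dim1} yields $\rho_i=\rho_{d-i}$ and $\rho_{i+1}=\rho_{d-i-1}$, so $\rho_i\geq \rho_{i+1}$ is equivalent to $\rho_{d-i-1}\leq \rho_{d-i}$, which is (i) applied at the index $d-i-1$ (and $0\leq d-i-1<d/2$ holds by the range of $i$). I do not anticipate a genuine obstacle here; the content is entirely packaged in Lemma \ref{lemma:U1} and the injectivity half of Lemma \ref{lemma:6.5}, with the only minor care needed being the parity check in passing from the direct sum inequality to the exact range stated in (i).
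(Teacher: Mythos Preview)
Your argument is correct and follows essentially the same approach as the paper: for (i) you use the direct sum $U_i=K_i\oplus RU_{i-1}$ from Lemma \ref{lemma:U1}(i) together with the injectivity in Lemma \ref{lemma:6.5}, and for (ii) you invoke the symmetry $\rho_i=\rho_{d-i}$ from Lemma \ref{lemma:dim1}. The paper's proof is terse and just cites these lemmas; your parity check at $j=(d-1)/2$ is a harmless extra detail making the index bookkeeping explicit.
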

\begin{proof}
(i)  Use Lemma \ref{lemma:U1}(i) and Lemma \ref{lemma:6.5}.\\
(ii)  Use Corollary \ref{cor:dim}(i) and Lemma \ref{lemma:dim1}.
\end{proof}

\begin{cor}{\rm \cite[Lemma 4.3]{refinement}}
For $1\leq i\leq d/2$, the dimension of $K_i$ equals $\rho_i-\rho_{i-1}$ (this dimension could be zero).  Moreover, the dimension of $K_0$ equals $\rho_0$.
\end{cor}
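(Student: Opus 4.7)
The plan is to read the dimension formula directly off the direct-sum decomposition of $U_i$ supplied by Lemma \ref{lemma:U1}(i). Everything needed is already in hand: the decomposition $U_i = K_i + RU_{i-1}$ (direct), the equality $\dim U_i = \rho_i$ from Lemma \ref{lemma:dim1}, and the injectivity half of Lemma \ref{lemma:6.5}.

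For the claim about $K_0$, I would simply invoke Definition \ref{def:K}, which records $K_0 = U_0$. Combined with Lemma \ref{lemma:dim1}, this immediately yields $\dim K_0 = \rho_0$.

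For the main claim, fix $i$ with $1 \leq i \leq d/2$. Taking dimensions in Lemma \ref{lemma:U1}(i) and using that the sum is direct, I get $\rho_i = \dim K_i + \dim RU_{i-1}$. It remains to identify $\dim RU_{i-1}$ with $\rho_{i-1}$. By Corollary \ref{cor:Rtau}, the action of $R$ on $U_{i-1}$ coincides with the action of $\tau_{i-1,i}(A)$, so Lemma \ref{lemma:6.5} applies with indices $i-1$ and $i$; since $1 \leq i \leq d/2$ gives $(i-1) + i \leq d-1 < d$, the map $U_{i-1} \to U_i$, $v \mapsto Rv$, is injective. Hence $\dim RU_{i-1} = \dim U_{i-1} = \rho_{i-1}$, and rearranging produces $\dim K_i = \rho_i - \rho_{i-1}$.

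There is no substantial obstacle here; the entire argument is a bookkeeping consequence of Lemma \ref{lemma:U1}(i) and Lemma \ref{lemma:6.5}. The only subtle point worth double-checking is the index range: one must verify that $(i-1)+i \leq d$ throughout $1 \leq i \leq d/2$ so that the injectivity clause of Lemma \ref{lemma:6.5} is actually available, and this inequality is straightforward.
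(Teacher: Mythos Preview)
Your proof is correct and follows the approach the paper intends: the corollary is stated without proof immediately after Corollary~\ref{cor:dim}, and both are meant to follow from the direct-sum decomposition in Lemma~\ref{lemma:U1}(i) together with the injectivity in Lemma~\ref{lemma:6.5}, exactly as you argue.
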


\begin{lemma}{\rm \cite[Theorem 4.7]{refinement}}  \label{lemma:U} 
\begin{enumerate}
\item[{\rm (i)}] For $0\leq i\leq d/2$, the following sum is direct.
\begin{equation}
U_i=K_i+\tau_{i-1,i}(A)K_{i-1}+\tau_{i-2,i}(A)K_{i-2}+\cdots+\tau_{0,i}(A)K_{0}.\label{eq:UK1}
\end{equation}

\item[{\rm (ii)}] For $d/2\leq i\leq d$, the following sum is direct.
\begin{equation*}
U_i=\tau_{d-i,i}(A)K_{d-i}+\tau_{d-i-1,i}(A)K_{d-i-1}+\cdots+\tau_{0,i}(A)K_{0}.\\
\end{equation*}
\end{enumerate}
\end{lemma}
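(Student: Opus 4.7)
The plan is to establish (i) by induction on $i$ and then to deduce (ii) from (i) by applying a well-chosen bijection of the form $\tau_{d-i,i}(A)$.

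For part (i), the base case $i=0$ is immediate since $K_0=U_0$ by Definition \ref{def:K}, and the sum degenerates to $U_0=K_0$. For the inductive step, fix $i$ with $1\leq i\leq d/2$ and assume the decomposition holds for $i-1$, so that
\begin{equation*}
U_{i-1}=K_{i-1}+\tau_{i-2,i-1}(A)K_{i-2}+\cdots+\tau_{0,i-1}(A)K_{0}
\end{equation*}
is direct. By Lemma \ref{lemma:U1}(i) we have the direct sum $U_i=K_i+RU_{i-1}$. By Lemma \ref{lemma:RA}, the restriction of $R$ to $U_{i-1}$ equals $A-\theta_{i-1}I=\tau_{i-1,i}(A)$, so $RU_{i-1}=\tau_{i-1,i}(A)U_{i-1}$. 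Applying $\tau_{i-1,i}(A)$ to the inductive decomposition and invoking the identity $\tau_{i-1,i}\tau_{j,i-1}=\tau_{j,i}$ from (\ref{eq:taumult}) rewrites this as
\begin{equation*}
RU_{i-1}=\tau_{i-1,i}(A)K_{i-1}+\tau_{i-2,i}(A)K_{i-2}+\cdots+\tau_{0,i}(A)K_{0}.
\end{equation*}
To see the sum is still direct, note that $(i-1)+i=2i-1\leq d$ for $i\leq d/2$, so by Lemma \ref{lemma:6.5} the map $\tau_{i-1,i}(A):U_{i-1}\to U_i$ is injective, hence carries the inductive direct sum onto a direct sum. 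Combining with the direct sum $U_i=K_i+RU_{i-1}$ from Lemma \ref{lemma:U1}(i) completes the induction.

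For part (ii), fix $i$ with $d/2\leq i\leq d$ and set $m=d-i$, so $0\leq m\leq d/2$. Part (i) applied at index $m$ gives the direct sum
\begin{equation*}
U_m=K_m+\tau_{m-1,m}(A)K_{m-1}+\cdots+\tau_{0,m}(A)K_{0}.
\end{equation*}
Since $m+i=d$, Lemma \ref{lemma:6.5} guarantees that $\tau_{m,i}(A):U_m\to U_i$ is a bijection. Applying this bijection termwise and using $\tau_{j,m}\tau_{m,i}=\tau_{j,i}$ from (\ref{eq:taumult}) yields (ii); the resulting sum is direct because bijections preserve directness. The boundary case $i=d/2$ (when $d$ is even) recovers (i), since then $m=i$ and $\tau_{m,i}(A)=\tau_{i,i}(A)=I$.

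The main obstacle is verifying directness in the inductive step: one must check not merely that $\tau_{i-1,i}(A)$ is injective on $K_{i-1}$, but that it is injective on \emph{all} of $U_{i-1}$, so that the image of a direct sum is again a direct sum. This is precisely the content of Lemma \ref{lemma:6.5} in the range $i\leq d/2$, and it is what makes the whole argument go through cleanly; the $\tau$-multiplicativity (\ref{eq:taumult}) then handles the relabeling of summands without any additional computation.
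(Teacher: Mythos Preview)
Your proof is correct and follows essentially the same approach as the paper: an inductive argument for (i) using the direct sum $U_i=K_i+(A-\theta_{i-1}I)U_{i-1}$ from Lemma~\ref{lemma:U1} together with the injectivity of $A-\theta_{i-1}I$ on $U_{i-1}$ (your Lemma~\ref{lemma:6.5} check is equivalent to the paper's use of Corollary~\ref{cor:6.5inj}), and then the bijection $\tau_{d-i,i}(A):U_{d-i}\to U_i$ from Lemma~\ref{lemma:6.5} combined with (\ref{eq:taumult}) for (ii). The only cosmetic difference is that you phrase (i) as explicit induction and use the $R$-form of Lemma~\ref{lemma:U1}, whereas the paper combines the recursions in one stroke and cites the $(A-\theta_{i-1}I)$-form; the content is identical.
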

\begin{proof}
(i)  Recall $U_0=K_0$.  By Lemma \ref{lemma:U1}(ii), the sum $U_j=K_j+(A-\theta_{j-1} I)U_{j-1}$ is direct for $1\leq j\leq i$.
Combining these equations and simplifying the result using (\ref{tau}), we get (\ref{eq:UK1}).
The directness of the sum (\ref{eq:UK1}) follows in view of Corollary \ref{cor:6.5inj}.\\
(ii)  Observe that $0\leq d-i\leq d/2$.  So (\ref{eq:UK1}) gives a decomposition of $U_{d-i}$.  
By Lemma \ref{lemma:6.5}, the restriction of $\tau_{d-i,i}(A)$ to $U_{d-i}$ gives a bijection $U_{d-i}\to U_i$.   
Apply this bijection to each term in the above mentioned decomposition for $U_{d-i}$ and simplify the result using the equation on the left in (\ref{eq:taumult}).
\end{proof}

\medskip

Combining parts (i) and (ii) of Lemma \ref{lemma:U} we have 
\begin{equation}
U_j=\sum_{i=0}^{\min\{j,d-j\}} \tau_{ij}(A) K_i \qquad (\text{direct sum})\label{eq:Udecomp}
\end{equation}
for $0\leq j \leq d$.
\medskip
\begin{cor} {\rm \cite[Theorem 4.8]{refinement}} \label{cor:Urefine}
The following sum is direct.
\begin{eqnarray}
V=\sum_{i=0}^{r}\sum_{j=i}^{d-i} \tau_{ij}(A)K_i,\label{eq:Urefine1}
\end{eqnarray}
where $r=\lfloor d/2\rfloor$.
\end{cor}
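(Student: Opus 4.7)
The plan is to combine the direct-sum decomposition $V = \bigoplus_{j=0}^{d} U_j$ from Theorem \ref{thm:U} with the direct-sum decomposition of each $U_j$ given in equation (\ref{eq:Udecomp}), and then simply re-index the resulting double sum.

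First I would write
\begin{equation*}
V = \sum_{j=0}^{d} U_j \qquad (\text{direct sum})
\end{equation*}
using Theorem \ref{thm:U}, and then substitute (\ref{eq:Udecomp}) into each summand to obtain
\begin{equation*}
V = \sum_{j=0}^{d}\sum_{i=0}^{\min\{j,d-j\}} \tau_{ij}(A)K_i \qquad (\text{direct sum}),
\end{equation*}
where the directness of the combined sum is immediate from the directness of the outer decomposition into $U_j$'s together with the directness of the inner decompositions within each $U_j$ (distinct pairs $(i,j)$ contribute to a single $U_j$ only when $j$ matches, and otherwise lie in distinct summands of $V = \bigoplus_j U_j$).

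Next I would swap the order of summation. The index constraints $0\le j\le d$ and $0\le i\le \min\{j,d-j\}$ are equivalent to $0\le i\le \lfloor d/2\rfloor = r$ together with $i\le j\le d-i$. Rewriting the double sum with $i$ as the outer index yields exactly
\begin{equation*}
V=\sum_{i=0}^{r}\sum_{j=i}^{d-i} \tau_{ij}(A)K_i \qquad (\text{direct sum}),
\end{equation*}
as required.

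There is no real obstacle here; the only point that deserves a careful sentence is the justification that the combined sum remains direct after substitution, which follows because (\ref{eq:Udecomp}) is a direct-sum decomposition of $U_j$ and $V=\bigoplus_j U_j$ is direct, so the whole double sum is direct.
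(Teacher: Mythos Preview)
Your proposal is correct and follows essentially the same approach as the paper: the paper's proof also substitutes (\ref{eq:Udecomp}) into the decomposition $V=\bigoplus_j U_j$ from Theorem \ref{thm:U} and then inverts the order of summation. Your additional sentence justifying why the combined sum remains direct is a welcome clarification, but otherwise the arguments are identical.
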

\begin{proof}
In the decomposition of $V$ from Theorem \ref{thm:U}, evaluate each summand using (\ref{eq:Udecomp}).  In the resulting double summation, invert the order of summation.
\end{proof}


\section{The subalgebra $M$}

We continue to discuss the TD system $\Phi$ from Definition \ref{def:TDsys}.
Recall from Section \ref{section:prelim} the subalgebra $M$ of ${\rm End}(V)$ generated by $A$.  In our discussion of $M$, we mentioned that each of $\{E_i\}_{i=0}^d$, $\{A^i\}_{i=0}^d$ is a basis for $M$.  In this section, we give a third basis for $M$ and use it to realize $V$ as a direct sum of $M$-modules.\\


\begin{lemma}\label{lemma:Mbasis}For $0\leq i\leq d/2$, the vector space $M$ has basis
\begin{equation}
\{ E_0, E_1, \mathellipsis,E_{i-1}\}\cup\{ E_{d-i+1}, E_{d-i+2}, \mathellipsis,E_{d}\} \cup\{ \tau_{ij}(A)|i\leq j\leq d-i\}. \label{eqn:Mbasis}
\end{equation}
\end{lemma}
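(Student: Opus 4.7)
The plan rests on a dimension count. Since $\{E_0,\ldots,E_d\}$ is already known to be a basis of $M$, we have $\dim M = d+1$, and the proposed set (\ref{eqn:Mbasis}) contains $i+i+(d-2i+1)=d+1$ elements. Hence it is a basis if and only if it spans $M$. Since the set already contains $E_k$ for every $k$ outside the range $[i,d-i]$, the task reduces to showing that each $E_m$ with $i\leq m\leq d-i$ lies in the span of the set.

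For this, I would expand each $\tau_{ij}(A)$ in the idempotent basis. Using $AE_k=\theta_k E_k$ together with $E_kE_\ell=\delta_{k\ell}E_k$ and $I=\sum_kE_k$, one obtains
$$\tau_{ij}(A)=\sum_{k=0}^d \tau_{ij}(\theta_k)\,E_k.$$
Subtracting the outlying summands (those with $k<i$ or $k>d-i$), which are already available to us, reduces each $\tau_{ij}(A)$ modulo the outlying $E_k$ to $\sum_{k=i}^{d-i}\tau_{ij}(\theta_k)\,E_k$. It therefore suffices to prove that the square matrix $T=[\tau_{ij}(\theta_k)]$, with both $j$ and $k$ ranging over $\{i,i+1,\ldots,d-i\}$, is invertible.

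This is immediate from the product formula $\tau_{ij}(\theta_k)=\prod_{\ell=i}^{j-1}(\theta_k-\theta_\ell)$. Since the eigenvalues $\theta_0,\ldots,\theta_d$ of $A$ are distinct, the entry $\tau_{ij}(\theta_k)$ vanishes exactly when $i\leq k<j$, while its diagonal value $\tau_{ij}(\theta_j)=\prod_{\ell=i}^{j-1}(\theta_j-\theta_\ell)$ is nonzero. Thus $T$ is upper triangular with nonzero diagonal, hence invertible. Inverting $T$ expresses each $E_m$ with $i\leq m\leq d-i$ as a linear combination of the $\tau_{ij}(A)$ and the outlying $E_k$, completing the spanning argument.

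I do not anticipate any real obstacle. The substantive content is simply that $\{\tau_{ij}\}_{j=i}^{d-i}$ consists of $d-2i+1$ polynomials of degrees $0,1,\ldots,d-2i$, so evaluation at the $d-2i+1$ distinct points $\theta_i,\ldots,\theta_{d-i}$ is an isomorphism; the triangular-with-nonzero-diagonal structure of $T$ is a concrete manifestation of this fact.
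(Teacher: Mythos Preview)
Your argument is correct. It differs from the paper's proof, which is much shorter because it cites an external result: by \cite[Lemma~5.1]{TDclass}, the set obtained by replacing $\{\tau_{ij}(A)\mid i\le j\le d-i\}$ with $\{A^{j-i}\mid i\le j\le d-i\}$ is already known to be a basis of $M$; one then simply observes that the $\tau_{ij}(A)$ span the same subspace as the powers $I,A,\ldots,A^{d-2i}$ because their degrees are $0,1,\ldots,d-2i$. Your route is self-contained: you work directly from the idempotent basis $\{E_k\}_{k=0}^d$, expand each $\tau_{ij}(A)$ in that basis, and invert the resulting evaluation matrix. The cost is a slightly longer argument; the benefit is that you avoid relying on the cited lemma (indeed, your evaluation-matrix computation with $i=0$ essentially reproves that lemma).
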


\begin{proof} By \cite[Lemma 5.1]{TDclass}, 
\begin{equation*}
\{ E_0, E_1, \mathellipsis,E_{i-1}\}\cup\{ E_{d-i+1}, E_{d-i+2}, \mathellipsis,E_{d}\} \cup\{ A^{j-i}|i\leq j\leq d-i\}
\end{equation*}
is a basis for $M$.  By the comments following (\ref{eta}),
\begin{equation*}
\text{Span}\{A^{j-i}|i\leq j\leq d-i\} = \text{Span}\{\tau_{ij}(A)|i\leq j\leq d-i\}.
\end{equation*}
The result follows.
\end{proof}

\medskip


For the rest of this section, we view $V$ as an $M$-module.  
For $0\leq i\leq d/2$ let $MK_i$ denote the $M$-submodule of $V$ generated by $K_i$.
Our goal in this section is to show that the sum $V=\sum_{i=0}^{r} MK_i$ is direct, where $r=\lfloor d/2\rfloor$.  
We start by giving a detailed description of the $MK_i$.

\medskip  


\begin{lemma}\label{lemma:MK}
For $0\leq i\leq d/2$ such that $K_i\neq 0$, the sum
\begin{equation}
MK_i=K_i+\tau_{i,i+1}(A)K_i+\tau_{i,i+2}(A)K_i+\cdots+\tau_{i,d-i}(A)K_i\label{eq:MK1}
\end{equation}
is direct.  Moreover $\tau_{i,d-i+1}$ is the minimal polynomial for the action of $A$ on $MK_i$.
\end{lemma}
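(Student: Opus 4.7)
The plan has three steps: identify $MK_i$ with the right-hand side of (\ref{eq:MK1}) using the basis of $M$ from Lemma \ref{lemma:Mbasis}; verify directness from properties of the first split decomposition; and determine the minimal polynomial via a dimension count.

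For the first step, by Definition \ref{def:K} we have $K_i \subseteq E_iV + E_{i+1}V + \cdots + E_{d-i}V$, so the idempotents $E_h$ with $h<i$ or $h>d-i$ vanish on $K_i$ (eigenspaces for distinct eigenvalues are killed by each other's projection). When the basis (\ref{eqn:Mbasis}) of $M$ is applied to $K_i$, all contributions from those idempotents drop out, leaving $MK_i = \sum_{j=i}^{d-i} \tau_{ij}(A) K_i$; using $\tau_{ii}=1$, this is exactly the right-hand side of (\ref{eq:MK1}). For directness, Lemma \ref{lemma:tauU_i} applied to the inclusion $K_i \subseteq U_i$ gives $\tau_{ij}(A) K_i \subseteq U_j$, and since the $U_j$ are in direct sum in $V$ by Theorem \ref{thm:U}, so are the subspaces $\tau_{ij}(A) K_i$ for $i \le j \le d-i$.

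For the minimal polynomial, note each $E_h V$ is $A$-invariant and $K_i$ lies in $\sum_{h=i}^{d-i} E_h V$, hence $MK_i \subseteq \sum_{h=i}^{d-i} E_h V$ and $\tau_{i,d-i+1}(A)$ annihilates $MK_i$. Therefore the minimal polynomial $p$ of $A|_{MK_i}$ divides $\tau_{i,d-i+1}$, giving $\deg p \le d-2i+1$. For the reverse inequality, set $m = \deg p$; then on $MK_i$ the powers $I, A, A^2, \ldots$ are spanned by $I, A, \ldots, A^{m-1}$, so $MK_i = \sum_{k=0}^{m-1} A^k K_i$ and $\dim MK_i \le m \dim K_i$. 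From the direct sum in Step 2 together with Lemma \ref{lemma:tauK} (which gives $\dim \tau_{ij}(A) K_i = \dim K_i$ for $i \le j \le d-i$), we have $\dim MK_i = (d-2i+1)\dim K_i$. Since $K_i \ne 0$, we conclude $m \ge d-2i+1$, forcing $p = \tau_{i,d-i+1}$ (both are monic of the same degree).

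The main obstacle is not any deep computation but finding the clean chain of lemmas that makes the bookkeeping transparent. The basis of Lemma \ref{lemma:Mbasis} is tailored so that it restricts cleanly to a spanning set of $MK_i$, and the equality $\dim \tau_{ij}(A) K_i = \dim K_i$ from Lemma \ref{lemma:tauK} is exactly what is needed so that the dimension count matches the degree of $\tau_{i,d-i+1}$ sharply; the hypothesis $K_i \ne 0$ is what prevents this comparison from degenerating.
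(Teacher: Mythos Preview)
Your proof is correct and follows essentially the same approach as the paper's: both use the basis of $M$ from Lemma~\ref{lemma:Mbasis} to obtain (\ref{eq:MK1}), invoke $\tau_{ij}(A)K_i\subseteq U_j$ and Theorem~\ref{thm:U} for directness, and then bound the degree of the minimal polynomial from above via $\tau_{i,d-i+1}(A)MK_i=0$ and from below using Lemma~\ref{lemma:tauK} together with $K_i\neq 0$. The only difference is in the packaging of the lower bound: the paper argues by contradiction (a strictly smaller degree would force $MK_i\subseteq U_i+\cdots+U_{i+k-1}$, missing the nonzero summand $\tau_{i,d-i}(A)K_i\subseteq U_{d-i}$), whereas you do the equivalent dimension count $\dim MK_i=(d-2i+1)\dim K_i$ versus $\dim MK_i\le m\dim K_i$.
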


\begin{proof}
For the basis of $M$ given in (\ref{eqn:Mbasis}), apply each element to $K_i$.
By Definition \ref{def:K}, each primitive idempotent in (\ref{eqn:Mbasis}) vanishes on $K_i$.
This gives equation (\ref{eq:MK1}).
We now show the sum on the right in (\ref{eq:MK1}) is direct.  By Lemma \ref{lemma:tauU_i},
 we have $\tau_{ij}(A)K_i\subseteq U_{j}$ for $i\leq j \leq d-i$. 
The sum (\ref{eq:MK1}) is direct by this and Theorem \ref{thm:U}.

It remains to show that $\tau_{i,d-i+1}$ is the minimal polynomial for the action of $A$ on $MK_i$.  Let $P$ denote the minimal polynomial for the action of $A$ on $MK_i$ and let $k$ denote the degree of $P$.  
By Lemma \ref{lemma:taukernel} and Definition \ref{def:K}, $\tau_{i,d-i+1}(A)K_i=0$.
Since $A\in M$ and $M$ is commutative, it follows that $ \tau_{i,d-i+1}(A) MK_i=0$.
So $P$ divides $\tau_{i,d-i+1}$ and hence $k\leq d-2i+1$.

Suppose now that $k<d-2i+1$ to get a contradiction.
Since the degree of $P$ is $k$,
\begin{equation}
MK_i=K_i+AK_i+\cdots +A^{k-1}K_i.\label{eq:MK22}
\end{equation}
By (\ref{eq:A^kU}), the right-hand side of (\ref{eq:MK22}) is contained in $U_i+U_{i+1}+\cdots +U_{i+k-1}$. 
By Lemma \ref{lemma:tauK}, the restriction of $\tau_{i,d-i}(A)$ to $K_i$ is an injection. It follows from this and $K_i\neq 0$ that $\tau_{i,d-i}(A)K_i\neq 0$.  Recall that $\tau_{i,d-i}(A)K_i\subseteq U_{d-i}$.  
By (\ref{eq:MK1}) and the above comments we find that
$\tau_{i,d-i}(A)K_i$ is contained in the intersection of 
$U_i+U_{i+1}+\cdots+U_{i+k-1}$ and $U_{d-i}$.  Since $k<d-2i+1$, this intersection is zero by Theorem \ref{thm:U}.  
Therefore $\tau_{i,d-i}(A)K_i=0$ for a contradiction.  
Thus $k=d-2i+1$ and therefore
$P=\tau_{i,d-i+1}$ since $\tau_{i,d-i+1}$ is monic.
\end{proof}


\begin{cor}\label{cor:Mv1}
For $0\leq i\leq d/2$ and $0\neq v\in K_i$, the vector space $Mv$ has basis
\begin{equation*}
v,\quad \tau_{i,i+1}(A)v,\quad \tau_{i,i+2}(A)v,\quad \mathellipsis,\quad \tau_{i,d-i}(A)v.
\end{equation*}
\end{cor}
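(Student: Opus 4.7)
The plan is to read off the desired basis from Lemma \ref{lemma:Mbasis} applied to $v$, and to extract linear independence from the directness statement in Lemma \ref{lemma:MK}. First I would invoke Lemma \ref{lemma:Mbasis} to express every element of $M$ as a linear combination of the primitive idempotents $E_0, \ldots, E_{i-1}, E_{d-i+1}, \ldots, E_d$ together with $\{\tau_{ij}(A) : i \leq j \leq d-i\}$. Applying this basis to $v$, I would observe that by Definition \ref{def:K} we have $v \in K_i \subseteq E_i V + E_{i+1} V + \cdots + E_{d-i} V$, so each of the listed primitive idempotents annihilates $v$. Consequently $Mv$ is spanned by $\tau_{ii}(A)v = v, \tau_{i,i+1}(A)v, \ldots, \tau_{i,d-i}(A)v$.

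For linear independence, I would suppose
\begin{equation*}
\sum_{j=i}^{d-i} c_j \, \tau_{ij}(A)v = 0.
\end{equation*}
Since $v \neq 0$ lies in $K_i$, we have $K_i \neq 0$, so Lemma \ref{lemma:MK} applies and asserts that the sum $\sum_{j=i}^{d-i} \tau_{ij}(A)K_i$ is direct. As $c_j \tau_{ij}(A) v$ sits in the summand $\tau_{ij}(A)K_i$, directness forces $c_j \tau_{ij}(A) v = 0$ for every $j$. By Lemma \ref{lemma:tauK}, the restriction of $\tau_{ij}(A)$ to $K_i$ is a bijection onto $\tau_{ij}(A)K_i$, so $\tau_{ij}(A)v \neq 0$, and therefore $c_j = 0$ for $i \leq j \leq d-i$.

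I do not foresee any serious obstacle here: the statement is essentially a ``specialization to a single vector'' version of Lemma \ref{lemma:MK}, and both the spanning and the independence steps follow immediately from results already in place. The only care required is to confirm that the chosen basis of $M$ from Lemma \ref{lemma:Mbasis} collapses cleanly to the proposed list upon being applied to $v \in K_i$, which is guaranteed by the defining intersection in Definition \ref{def:K}.
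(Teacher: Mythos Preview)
Your argument is correct and follows the paper's approach: the corollary is stated without proof immediately after Lemma~\ref{lemma:MK}, and your proof is precisely the ``specialization to a single vector'' that the paper leaves implicit, using Lemma~\ref{lemma:Mbasis} for spanning and the directness in Lemma~\ref{lemma:MK} together with Lemma~\ref{lemma:tauK} for independence.
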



\begin{lemma}\label{lemma:VMK_i}  
The following is a direct sum of $M$-modules.
\begin{eqnarray}
&V = \displaystyle\sum_{i=0}^{r} MK_i,\label{eq:VMK_i}
\end{eqnarray}
where $r=\lfloor d/2\rfloor$.
\end{lemma}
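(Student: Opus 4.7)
The plan is to identify the sum $V = \sum_{i=0}^{r} MK_i$ with the refined decomposition of $V$ already established in Corollary \ref{cor:Urefine}, and then read off both the equality and the directness from that identification.

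First I would handle the easy case where $K_i = 0$: then $MK_i = 0$ and such terms contribute nothing, so I may restrict attention to those indices $i$ with $K_i \neq 0$. For each such $i$, Lemma \ref{lemma:MK} gives the explicit description
\begin{equation*}
MK_i = \sum_{j=i}^{d-i} \tau_{ij}(A)K_i
\end{equation*}
as a direct sum (with the convention $\tau_{ii}(A) = I$, so that the $j = i$ term is $K_i$ itself). In particular $MK_i$ equals the inner sum that appears in the double-sum expression of Corollary \ref{cor:Urefine}. Summing over $i$ then yields
\begin{equation*}
\sum_{i=0}^{r} MK_i = \sum_{i=0}^{r}\sum_{j=i}^{d-i} \tau_{ij}(A)K_i = V,
\end{equation*}
where the last equality is Corollary \ref{cor:Urefine}.

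For the directness, I would argue as follows. Suppose $\sum_{i=0}^{r} m_i = 0$ with $m_i \in MK_i$. Using the decomposition of $MK_i$ from Lemma \ref{lemma:MK}, write each $m_i$ uniquely as $m_i = \sum_{j=i}^{d-i} \tau_{ij}(A)k_{ij}$ with $k_{ij} \in K_i$. Substituting into $\sum_i m_i = 0$ produces a vanishing sum of the form $\sum_{i=0}^{r}\sum_{j=i}^{d-i}\tau_{ij}(A)k_{ij} = 0$, and Corollary \ref{cor:Urefine} forces every term $\tau_{ij}(A)k_{ij}$ to vanish. Hence each $m_i = 0$, which is precisely directness.

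Finally, each $MK_i$ is an $M$-submodule of $V$ by its very definition, so (\ref{eq:VMK_i}) is indeed a direct sum decomposition in the category of $M$-modules. I do not anticipate a serious obstacle here: the entire argument is a bookkeeping step that combines the internal direct-sum description of $MK_i$ (Lemma \ref{lemma:MK}) with the global direct-sum refinement (Corollary \ref{cor:Urefine}); the only mild care needed is to handle indices $i$ where $K_i$ may be zero, which is dispensed with by the trivial observation above.
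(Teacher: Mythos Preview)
Your proof is correct and follows essentially the same approach as the paper: both arguments combine Lemma \ref{lemma:MK} (the internal decomposition of each $MK_i$) with Corollary \ref{cor:Urefine} (the global refined decomposition of $V$) to obtain both the equality and the directness. Your version simply spells out explicitly the bookkeeping that the paper leaves implicit, including the harmless case $K_i = 0$.
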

\begin{proof}
Equation (\ref{eq:VMK_i}) follows from Corollary \ref{cor:Urefine} and Lemma \ref{lemma:MK}.
The directness of the sum follows from the directness of the sum in Corollary \ref{cor:Urefine}.
\end{proof}


\section{The linear transformation $\Delta$}\label{section:Delta}

We continue to discuss the TD system $\Phi$ from Definition \ref{def:TDsys}.  In this section we will construct a linear transformation $\Delta\in {\rm End}(V)$ that has certain properties which we find attractive.  It will turn out that $\Delta$ is the unique element of ${\rm End} (V)$ such that both
\begin{align*}
&(\Delta -I)E_i^{*} V\subseteq E_0^{*}V+E_1^{*}V+\cdots +E_{i-1}^{*}V,\\
&\Delta (E_iV+E_{i+1}V+\cdots+E_d V)= E_0 V+E_{1}V+\cdots+E_{d-i} V
\end{align*}
for $0\leq i\leq d$.

\smallskip

\begin{definition}\label{def:Delta}{\rm
Define $\Delta\in{\rm End}(V)$ by
\begin{equation*}
\Delta = \sum_{h=0}^d F_h^{\Downarrow}F_h,
\end{equation*}
where $F_h$, $F_h^{\Downarrow}$ are from Definition \ref{def:F_i}.
}
\end{definition}

\begin{lemma}\label{lemma:FDelta}
With reference to Definition \ref{def:Delta}, 
\begin{equation*}
F_i^{\Downarrow}\Delta=\Delta F_i\qquad\qquad (0\leq i\leq d).
\end{equation*}
\end{lemma}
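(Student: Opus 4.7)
The plan is to prove the identity by direct computation, expanding $\Delta$ via its definition and then simplifying each side using the orthogonality relations of the projections $\{F_h\}_{h=0}^d$ and $\{F_h^{\Downarrow}\}_{h=0}^d$.

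First I would compute $F_i^{\Downarrow}\Delta$. Substituting the definition from Definition \ref{def:Delta} gives
\[
F_i^{\Downarrow}\Delta = F_i^{\Downarrow}\sum_{h=0}^d F_h^{\Downarrow}F_h = \sum_{h=0}^d \bigl(F_i^{\Downarrow}F_h^{\Downarrow}\bigr)F_h.
\]
Applying Lemma \ref{lemma:F} to $\Phi^{\Downarrow}$ yields $F_i^{\Downarrow}F_h^{\Downarrow}=\delta_{ih}F_i^{\Downarrow}$, so every term with $h\neq i$ vanishes and the sum collapses to $F_i^{\Downarrow}F_i$.

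Next I would compute $\Delta F_i$ analogously:
\[
\Delta F_i = \sum_{h=0}^d F_h^{\Downarrow}\bigl(F_hF_i\bigr) = \sum_{h=0}^d F_h^{\Downarrow}\delta_{hi}F_i = F_i^{\Downarrow}F_i,
\]
where the middle equality uses Lemma \ref{lemma:F} applied to $\Phi$. Since both $F_i^{\Downarrow}\Delta$ and $\Delta F_i$ equal $F_i^{\Downarrow}F_i$, the desired equation holds.

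There is no real obstacle here; the entire argument is a two-line bookkeeping exercise, and the only ingredient beyond Definition \ref{def:Delta} is the pairwise orthogonality $F_hF_j=\delta_{hj}F_h$ (and its $\Downarrow$ counterpart) provided by Lemma \ref{lemma:F}. The statement essentially records that $\Delta$ intertwines the projections $F_i$ and $F_i^{\Downarrow}$, and this will be useful in subsequent sections to transfer properties between the first and second split decompositions.
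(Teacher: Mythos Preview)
Your proof is correct and follows exactly the same approach as the paper, which simply cites the orthogonality relation $F_hF_j=\delta_{hj}F_h$ from Lemma~\ref{lemma:F} (applied to both $\Phi$ and $\Phi^{\Downarrow}$) together with Definition~\ref{def:Delta}.
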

\begin{proof}
Use (\ref{eq:FF}) and Definition \ref{def:Delta}.
\end{proof}

\begin{lemma}\label{lemma:Delta^(-1)}
With reference to Definition \ref{def:Delta}, $\Delta^{-1}$ exists and 
\begin{equation*}
\Delta^{-1}=\Delta^{\Downarrow}.
\end{equation*}
\end{lemma}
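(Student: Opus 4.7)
The plan is to unpack the notational convention to write down $\Delta^{\Downarrow}$ explicitly, then verify that it is a two-sided inverse of $\Delta$ by a direct computation that collapses via the orthogonality/idempotency identities already established for the projections.

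First I would observe that since $(\Phi^{\Downarrow})^{\Downarrow} = \Phi$, the convention gives $F_h^{\Downarrow\Downarrow} = F_h$, and so
\begin{equation*}
\Delta^{\Downarrow} = \sum_{h=0}^d F_h^{\Downarrow\Downarrow} F_h^{\Downarrow} = \sum_{h=0}^d F_h F_h^{\Downarrow}.
\end{equation*}
With $\Delta$ and $\Delta^{\Downarrow}$ both now expressed as explicit sums of products of the projections $\{F_h\}_{h=0}^d$ and $\{F_h^{\Downarrow}\}_{h=0}^d$, it will suffice to check the two products $\Delta \Delta^{\Downarrow} = I$ and $\Delta^{\Downarrow} \Delta = I$.

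Next I would expand
\begin{equation*}
\Delta \Delta^{\Downarrow} = \sum_{h=0}^d \sum_{k=0}^d F_h^{\Downarrow} F_h F_k F_k^{\Downarrow},
\end{equation*}
apply the orthogonality $F_h F_k = \delta_{hk} F_h$ from Lemma \ref{lemma:F} to collapse the double sum to $\sum_{h=0}^d F_h^{\Downarrow} F_h F_h^{\Downarrow}$, then apply equation (\ref{eq:projprod2-2}) of Lemma \ref{lemma:projprod2} to reduce each summand to $F_h^{\Downarrow}$, and finally use $\sum_{h=0}^d F_h^{\Downarrow} = I$ (which is Lemma \ref{lemma:F} applied to $\Phi^{\Downarrow}$) to conclude $\Delta \Delta^{\Downarrow} = I$. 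The reverse product $\Delta^{\Downarrow} \Delta$ proceeds symmetrically, using (\ref{eq:projprod2-1}) in place of (\ref{eq:projprod2-2}). Once both products equal $I$, the invertibility of $\Delta$ and the identification $\Delta^{-1} = \Delta^{\Downarrow}$ follow immediately.

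There is no real obstacle here: the proof is essentially a bookkeeping exercise once the correct expansion of $\Delta^{\Downarrow}$ under the $\Downarrow$ convention is in hand. The only subtle point to flag is the self-inverse nature of the $\Downarrow$ operation, which is what guarantees that $\Delta^{\Downarrow}$ is the \emph{swap} of the two factors $F_h^{\Downarrow}$ and $F_h$ in the defining sum for $\Delta$; this is precisely what makes the symmetric lemmas (\ref{eq:projprod2-1}) and (\ref{eq:projprod2-2}) line up to give the two one-sided inverses.
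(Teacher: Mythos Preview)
Your proposal is correct and follows essentially the same route as the paper: write $\Delta^{\Downarrow}=\sum_h F_h F_h^{\Downarrow}$, expand the product, collapse via the orthogonality in Lemma~\ref{lemma:F}, and finish with Lemma~\ref{lemma:projprod2}. The paper only verifies the single product $\Delta\Delta^{\Downarrow}=I$ (a one-sided inverse suffices on a finite-dimensional space), whereas you compute both directions; this is harmless extra work, not a different argument.
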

\begin{proof}
Observe that $\Delta^{\Downarrow}=\sum_{h=0}^d F_hF_h^{\Downarrow}$.  Consider the product $\Delta\Delta^{\Downarrow}$.  Simplify this product using Lemma \ref{lemma:F} and Lemma \ref{lemma:projprod2} to obtain $\Delta\Delta^{\Downarrow}=I$.
\end{proof}

\begin{lemma}\label{lemma:DeltaU}
With reference to Definition \ref{def:Delta}, 
\begin{align}
&\Delta U_i=U_i^{\Downarrow} &\ (0\leq i \leq d),\label{eq:DeltaU1}\\
&(\Delta -I)U_i\subseteq U_0+U_1+\cdots +U_{i-1} &\ (0\leq i \leq d).\label{eq:DeltaU2}
\end{align}
\end{lemma}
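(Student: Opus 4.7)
The plan is to evaluate $\Delta$ pointwise on $U_i$ using the definition $\Delta = \sum_{h=0}^d F_h^{\Downarrow} F_h$ and the orthogonality relations for the projections $\{F_h\}_{h=0}^d$, and then use the machinery of Section on projections to relate the result back to the $U$-decomposition.

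For the first claim, let $v \in U_i$. By Definition \ref{def:F_i} (specifically (\ref{eq:F_i1}) and (\ref{eq:F_i2})), $F_h v = \delta_{hi} v$, so
\begin{equation*}
\Delta v \;=\; \sum_{h=0}^d F_h^{\Downarrow} F_h v \;=\; F_i^{\Downarrow} v.
\end{equation*}
This immediately gives $\Delta U_i = F_i^{\Downarrow} U_i$. By Lemma \ref{lemma:F-inv}, the restriction $F_i^{\Downarrow}|_{U_i} : U_i \to U_i^{\Downarrow}$ is a bijection, so $F_i^{\Downarrow} U_i = U_i^{\Downarrow}$, establishing (\ref{eq:DeltaU1}).

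For the second claim, the idea is to show that the ``top component'' of $\Delta v - v$ in the first split decomposition vanishes. Keeping $v \in U_i$ and $\Delta v = F_i^{\Downarrow} v \in U_i^{\Downarrow}$, I apply Lemma \ref{lemma:sumUUdd} to get $U_i^{\Downarrow} \subseteq U_0^{\Downarrow} + \cdots + U_i^{\Downarrow} = U_0 + U_1 + \cdots + U_i$. Hence I may write
\begin{equation*}
F_i^{\Downarrow} v \;=\; u_0 + u_1 + \cdots + u_i, \qquad u_h \in U_h.
\end{equation*}
Applying $F_i$ to both sides and using (\ref{eq:F_i1}), (\ref{eq:F_i2}) on the right, I get $F_i F_i^{\Downarrow} v = u_i$. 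But by Lemma \ref{lemma:F-inv}, the map $F_i F_i^{\Downarrow}$ acts as the identity on $U_i$, so $u_i = v$. Therefore
\begin{equation*}
(\Delta - I)v \;=\; F_i^{\Downarrow} v - v \;=\; u_0 + u_1 + \cdots + u_{i-1} \;\in\; U_0 + U_1 + \cdots + U_{i-1},
\end{equation*}
which is (\ref{eq:DeltaU2}).

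There is no real obstacle here; the only subtlety is remembering that $F_i^{\Downarrow} v$ lives in $U_i^{\Downarrow}$ rather than $U_i$, so one must pass through Lemma \ref{lemma:sumUUdd} to express it in terms of the first split decomposition, and then exploit $F_i F_i^{\Downarrow} = I$ on $U_i$ from Lemma \ref{lemma:F-inv} to identify the top component.
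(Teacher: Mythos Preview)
Your proof is correct and follows essentially the same approach as the paper: both compute $\Delta v = F_i^{\Downarrow} v$ on $U_i$ and then use that $F_i F_i^{\Downarrow}$ acts as the identity on $U_i$ (via Lemma~\ref{lemma:F-inv}, equivalently (\ref{eq:projprod2-1})) to kill the $U_i$-component of $\Delta v - v$. The only cosmetic difference is that for (\ref{eq:DeltaU2}) the paper shows $F_j(\Delta - I)U_i = 0$ for each $j \geq i$ (invoking Lemma~\ref{lemma:projprod1} for $j > i$), whereas you first place $F_i^{\Downarrow} v$ inside $U_0 + \cdots + U_i$ via Lemma~\ref{lemma:sumUUdd} and then read off the top component; these are two phrasings of the same computation.
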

\begin{proof}
Line (\ref{eq:DeltaU1}) follows from Definition \ref{def:F_i}, Lemma \ref{lemma:F-inv} and Definition \ref{def:Delta}.

We now verify (\ref{eq:DeltaU2}).  By Definition \ref{def:F_i}, it suffices to show that $F_j\left(\Delta-I\right)U_i=0$ for $i\leq j\leq d$.  For $i=j$, this follows from Definition \ref{def:F_i}, Definition \ref{def:Delta}, and (\ref{eq:projprod2-1}).  For $i+1\leq j \leq d$, this follows from Definition \ref{def:F_i}, Definition \ref{def:Delta},  and (\ref{eq:projprod1}).
\end{proof}

\medskip

We now show that (\ref{eq:DeltaU1}), (\ref{eq:DeltaU2}) characterize $\Delta$.

\medskip

\begin{lemma}\label{lemma:DD'}
Given $\Delta'\in{\rm End}(V)$ such that
\begin{align}
&\Delta' U_i\subseteq U_i^{\Downarrow} &\ (0\leq i \leq d),\label{eq:DeltaU21}\\
&(\Delta' -I)U_i\subseteq U_0+U_1+\cdots +U_{i-1} &\ (0\leq i \leq d).\label{eq:DeltaU22}
\end{align}
Then $\Delta'=\Delta$.
\end{lemma}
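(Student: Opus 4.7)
The plan is to set $D = \Delta' - \Delta$ and prove $D = 0$ by showing that $D$ vanishes on each summand $U_i$ of the first split decomposition. Since $V = \sum_{i=0}^d U_i$ by Theorem \ref{thm:U}, this suffices.

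Fix $i$ with $0 \leq i \leq d$. I would combine the two hypotheses on $\Delta'$ with the corresponding properties of $\Delta$ (which were established in Lemma \ref{lemma:DeltaU}) to pin down $DU_i$ in two different ways. First, using hypothesis (\ref{eq:DeltaU21}) together with $\Delta U_i = U_i^{\Downarrow}$ from Lemma \ref{lemma:DeltaU}, I get
\begin{equation*}
DU_i = (\Delta' - \Delta)U_i \subseteq \Delta'U_i + \Delta U_i \subseteq U_i^{\Downarrow}.
\end{equation*}
Second, subtracting the relation $(\Delta - I)U_i \subseteq U_0 + U_1 + \cdots + U_{i-1}$ from the analogous hypothesis (\ref{eq:DeltaU22}) on $\Delta'$, the $I$ terms cancel and I obtain
\begin{equation*}
DU_i = \bigl((\Delta' - I) - (\Delta - I)\bigr)U_i \subseteq U_0 + U_1 + \cdots + U_{i-1}.
\end{equation*}

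Now I would invoke Lemma \ref{lemma:sumUUdd} to rewrite the right-hand side of the second containment as $U_0^{\Downarrow} + U_1^{\Downarrow} + \cdots + U_{i-1}^{\Downarrow}$. Combining the two containments,
\begin{equation*}
DU_i \subseteq U_i^{\Downarrow} \cap \bigl(U_0^{\Downarrow} + U_1^{\Downarrow} + \cdots + U_{i-1}^{\Downarrow}\bigr).
\end{equation*}
Since $\{U_j^{\Downarrow}\}_{j=0}^d$ is a decomposition of $V$ (the second split decomposition, discussed in Section \ref{section:Udd}), this intersection is zero. Hence $DU_i = 0$.

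As this holds for every $i$, and $V$ is the direct sum of the $U_i$, we conclude $D = 0$, i.e.\ $\Delta' = \Delta$. There is no real obstacle here: the only substantive ingredient beyond the hypotheses is the identification of partial sums of the two split decompositions provided by Lemma \ref{lemma:sumUUdd}, which lets the two one-sided containments on $DU_i$ cut each other down using directness.
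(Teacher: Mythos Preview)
Your proof is correct and follows essentially the same approach as the paper: both argue that $(\Delta'-\Delta)U_i$ lies in $U_i^{\Downarrow}$ and in $U_0+\cdots+U_{i-1}=U_0^{\Downarrow}+\cdots+U_{i-1}^{\Downarrow}$, then use directness of the second split decomposition to conclude. The only difference is cosmetic (the sign of $D$ and the phrasing of the final intersection argument).
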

\begin{proof}
In view of Theorem \ref{thm:U}, it suffices to show that $\Delta$, $\Delta'$ agree on $U_i$ for $0\leq i\leq d$.  Let $i$ be given.  By (\ref{eq:DeltaU1}) and (\ref{eq:DeltaU21}), 
\begin{equation}
\left(\Delta-\Delta'\right)U_i\subseteq U_i^{\Downarrow}.\label{eq:DeltaU3}
\end{equation}
By (\ref{eq:DeltaU2}), (\ref{eq:DeltaU22}), and Lemma \ref{lemma:sumUUdd},
\begin{align}
\left(\Delta-\Delta'\right)U_i&\subseteq U_0+U_1+\cdots+U_{i-1}\nonumber\\
&= U_0^{\Downarrow}+U_1^{\Downarrow}+\cdots+U_{i-1}^{\Downarrow}.
\label{eq:DeltaU4}
\end{align}
Combining (\ref{eq:DeltaU3}) and (\ref{eq:DeltaU4}) we find that $\left(\Delta-\Delta'\right)U_i$ is contained in the intersection of $U_i^{\Downarrow}$ and $U_0^{\Downarrow}+U_1^{\Downarrow}+\cdots +U_{i-1}^{\Downarrow}$.  This intersection is zero by Theorem \ref{thm:U} (applied to $\Phi^{\Downarrow}$).  Therefore $(\Delta-\Delta')U_i=0$.  So $\Delta$, $\Delta'$ agree on $U_i$.  
\end{proof}

\begin{lemma}\label{lemma:Delta^{-1}U}
With reference to Definition \ref{def:Delta}, 
\begin{align*}
&(\Delta^{-1} -I)U_i\subseteq U_0+U_1+\cdots +U_{i-1} &\ (0\leq i \leq d).
\end{align*}
\end{lemma}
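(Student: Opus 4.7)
The plan is to derive this from Lemma \ref{lemma:DeltaU} by a short induction on $i$, using the invertibility of $\Delta$ (Lemma \ref{lemma:Delta^(-1)}) together with the algebraic identity
\begin{equation*}
\Delta^{-1} - I \;=\; -\,\Delta^{-1}(\Delta - I).
\end{equation*}
The point is that this identity immediately converts a statement about $(\Delta - I)U_i$ into one about $(\Delta^{-1} - I)U_i$, provided one also controls $\Delta^{-1}$ on the lower part of the flag $\{U_0+\cdots+U_{i-1}\}$, and that control is precisely what the induction hypothesis will furnish.

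For the base case $i=0$, the claim is that $(\Delta^{-1}-I)U_0 = 0$. By Lemma \ref{lemma:DeltaU} applied at $i=0$ we have $(\Delta-I)U_0=0$, so $\Delta$ restricts to the identity on $U_0$ and therefore so does $\Delta^{-1}$. For the inductive step, assume the conclusion for all indices less than $i$; equivalently, $\Delta^{-1}U_k \subseteq U_0 + U_1 + \cdots + U_k$ for $0 \leq k \leq i-1$, so that
\begin{equation*}
\Delta^{-1}\bigl(U_0 + U_1 + \cdots + U_{i-1}\bigr) \;\subseteq\; U_0 + U_1 + \cdots + U_{i-1}.
\end{equation*}
Then, using Lemma \ref{lemma:DeltaU} and the identity above,
\begin{equation*}
(\Delta^{-1} - I)U_i \;=\; -\Delta^{-1}(\Delta-I)U_i \;\subseteq\; \Delta^{-1}\bigl(U_0 + U_1 + \cdots + U_{i-1}\bigr) \;\subseteq\; U_0 + U_1 + \cdots + U_{i-1},
\end{equation*}
which is exactly the desired inclusion.

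I do not expect any real obstacle; the whole proof is essentially one line once the right algebraic identity is invoked, with an elementary induction to propagate the flag-stability of $\Delta^{-1}$. An alternative route would apply Lemma \ref{lemma:DeltaU} to the TD system $\Phi^{\Downarrow}$ and then use $\Delta^{\Downarrow} = \Delta^{-1}$ from Lemma \ref{lemma:Delta^(-1)} together with Lemma \ref{lemma:sumUUdd} to rewrite $U_0^{\Downarrow} + \cdots + U_{i-1}^{\Downarrow}$ as $U_0 + \cdots + U_{i-1}$; however, that approach yields the analogous statement with $U_i^{\Downarrow}$ on the left, and transferring it back to $U_i$ still requires essentially the inductive argument above, so the direct route is preferable.
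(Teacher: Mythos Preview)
Your proof is correct. The paper's proof uses the very same algebraic identity $\Delta^{-1}-I = -\Delta^{-1}(\Delta-I)$ (phrased as ``apply $\Delta^{-1}$ to both sides of (\ref{eq:DeltaU2})''), but then handles the right-hand side without induction: it rewrites $U_0+\cdots+U_{i-1}$ as $U_0^{\Downarrow}+\cdots+U_{i-1}^{\Downarrow}$ via Lemma~\ref{lemma:sumUUdd} and then applies $\Delta^{-1}$ term-by-term using (\ref{eq:DeltaU1}), which gives $\Delta^{-1}U_j^{\Downarrow}=U_j$ directly. So where you prove flag-stability of $\Delta^{-1}$ by induction, the paper gets it in one shot from the already-established bijection $\Delta:U_j\to U_j^{\Downarrow}$ together with the equality of partial sums; both arguments are short and essentially equivalent, the paper's being slightly more economical since the needed ingredients are already on the table.
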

\begin{proof}
Apply $\Delta^{-1}$ to both sides in (\ref{eq:DeltaU2}).  In the resulting containment, simplify the right-hand side using Lemma \ref{lemma:sumUUdd} and (\ref{eq:DeltaU1}).
\end{proof}

\begin{lemma}
With reference to Definition \ref{def:Delta}, 
\begin{align*}
&(\Delta -I)U_i^{\Downarrow}\subseteq U_0^{\Downarrow}+U_1^{\Downarrow}+\cdots +U_{i-1}^{\Downarrow} &\ (0\leq i \leq d).
\end{align*}
\end{lemma}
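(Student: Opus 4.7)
The plan is to obtain this inclusion by applying the previous lemma, Lemma \ref{lemma:Delta^{-1}U}, to the second inversion $\Phi^{\Downarrow}$ of $\Phi$, using the $\Downarrow$-notational convention introduced just after Definition \ref{def:TDsys}. The paper has already employed this substitute-and-translate technique several times (for instance in the proof of Lemma \ref{lemma:RRddF}), so it should produce a short proof here.

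First I would apply Lemma \ref{lemma:Delta^{-1}U} to $\Phi^{\Downarrow}$. Under this substitution the first split decomposition $\{U_i\}_{i=0}^d$ becomes $\{U_i^{\Downarrow}\}_{i=0}^d$ (compare Definition \ref{def:U} with (\ref{eq:Udd})), and the linear transformation $\Delta$ becomes $\Delta^{\Downarrow}$. This yields the inclusion
\begin{equation*}
\bigl((\Delta^{\Downarrow})^{-1}-I\bigr)\,U_i^{\Downarrow}\subseteq U_0^{\Downarrow}+U_1^{\Downarrow}+\cdots+U_{i-1}^{\Downarrow}
\end{equation*}
for $0\leq i\leq d$.

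Next I would identify $(\Delta^{\Downarrow})^{-1}$ with $\Delta$. Applying Lemma \ref{lemma:Delta^(-1)} to $\Phi^{\Downarrow}$ gives $(\Delta^{\Downarrow})^{-1}=(\Delta^{\Downarrow})^{\Downarrow}$, and since the $\Downarrow$ operation is involutive (because $(\Phi^{\Downarrow})^{\Downarrow}=\Phi$), the right-hand side equals $\Delta$. Substituting this into the displayed inclusion yields precisely the desired containment.

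The main obstacle is really just bookkeeping: one must verify that every object appearing in Lemma \ref{lemma:Delta^{-1}U} translates correctly under the $\Downarrow$ operation, and in particular that $(\Delta^{\Downarrow})^{-1}=\Delta$. If one prefers a direct argument, an alternative is to mimic the proof of Lemma \ref{lemma:DeltaU}: by Definition \ref{def:F_i} it suffices to show $F_j^{\Downarrow}(\Delta-I)U_i^{\Downarrow}=0$ for $i\leq j\leq d$; expanding $\Delta=\sum_h F_h^{\Downarrow}F_h$ and using (\ref{eq:FF}) together with (\ref{eq:projprod2-2}) for $j=i$ and (\ref{eq:projprod1}) for $j>i$ disposes of each case. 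Either route is routine.
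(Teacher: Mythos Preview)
Your proof is correct and follows exactly the paper's approach: apply Lemma \ref{lemma:Delta^{-1}U} to $\Phi^{\Downarrow}$ and then simplify using Lemma \ref{lemma:Delta^(-1)} (which directly gives $\Delta^{\Downarrow}=\Delta^{-1}$, hence $(\Delta^{\Downarrow})^{-1}=\Delta$). The alternative direct computation you sketch is also valid but is not the route the paper takes.
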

\begin{proof}
Apply Lemma \ref{lemma:Delta^{-1}U} to $\Phi^{\Downarrow}$.  Use Lemma \ref{lemma:Delta^(-1)} to simplify the result.
\end{proof}

\medskip

We now obtain the characterization of $\Delta$ given in the Introduction.

\begin{lemma}\label{lemma:DeltaEV}
With reference to Definition \ref{def:Delta}, 
\begin{align}
&(\Delta -I)E_i^{*} V\subseteq E_0^{*}V+E_1^{*}V+\cdots +E_{i-1}^{*}V &\ (0\leq i \leq d),\label{Delta1}\\
&\Delta (E_iV+E_{i+1}V+\cdots+E_d V)= E_0 V+E_{1}V+\cdots+E_{d-i} V &(0\leq i \leq d). \label{Delta2}
\end{align}
\end{lemma}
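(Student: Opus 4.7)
The plan is to translate both claims about the eigenspaces $E_i^*V$ and $E_iV$ into equivalent statements about the first and second split decompositions, and then invoke the already-established properties of $\Delta$ on those decompositions.

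For the first containment, my strategy is to rewrite $E_i^*V \subseteq E_0^*V + E_1^*V + \cdots + E_i^*V$, and then use Theorem \ref{thm:U}(iii) to identify this latter sum with $U_0 + U_1 + \cdots + U_i$. Thus it suffices to show that $(\Delta - I)$ sends $U_0 + U_1 + \cdots + U_i$ into $U_0 + U_1 + \cdots + U_{i-1}$. But this is immediate from Lemma \ref{lemma:DeltaU}, which guarantees $(\Delta - I)U_j \subseteq U_0 + U_1 + \cdots + U_{j-1}$ for each $0 \leq j \leq i$. A final application of Theorem \ref{thm:U}(iii) converts $U_0 + \cdots + U_{i-1}$ back into $E_0^*V + \cdots + E_{i-1}^*V$, yielding (\ref{Delta1}).

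For the second equation, I will again start by using Theorem \ref{thm:U}(iii) to identify $E_iV + E_{i+1}V + \cdots + E_dV$ with $U_i + U_{i+1} + \cdots + U_d$. Since $\Delta$ is linear and $\Delta U_j = U_j^{\Downarrow}$ for each $j$ by Lemma \ref{lemma:DeltaU}, it follows that
\begin{equation*}
\Delta(U_i + U_{i+1} + \cdots + U_d) = U_i^{\Downarrow} + U_{i+1}^{\Downarrow} + \cdots + U_d^{\Downarrow}.
\end{equation*}
To finish, I apply the analogue of Theorem \ref{thm:U}(iii) to $\Phi^{\Downarrow}$ (as recorded in Section \ref{section:Udd}), which gives $U_i^{\Downarrow} + U_{i+1}^{\Downarrow} + \cdots + U_d^{\Downarrow} = E_0V + E_1V + \cdots + E_{d-i}V$. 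Chaining the equalities produces (\ref{Delta2}).

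There is no real obstacle here: once one recognizes that both conditions are precisely the eigenspace-level translations of the statements already proven about $U_i$ and $U_i^{\Downarrow}$, the only care needed is index bookkeeping (in particular, remembering that the second split decomposition reindexes the $E_iV$-flag in reverse, which is exactly what produces the $d-i$ on the right-hand side of (\ref{Delta2})). The heart of the argument has already been done in Lemma \ref{lemma:DeltaU}; this lemma is essentially a change-of-notation statement.
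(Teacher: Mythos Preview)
Your proposal is correct and follows essentially the same approach as the paper's proof: both arguments use Theorem \ref{thm:U}(iii) to pass between the eigenspace sums and the split decompositions, invoke Lemma \ref{lemma:DeltaU} (specifically (\ref{eq:DeltaU1}) and (\ref{eq:DeltaU2})) for the action of $\Delta$, and then apply the $\Phi^{\Downarrow}$ version of Theorem \ref{thm:U}(iii) from Section \ref{section:Udd} to finish (\ref{Delta2}).
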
 
\begin{proof}
We first show (\ref{Delta1}).  
By Theorem \ref{thm:U}(iii) and (\ref{eq:DeltaU2}), 
\begin{align*}
(\Delta - I)E_i^* V &\subseteq (\Delta - I)(E_0^* V+E_1^* V+\cdots +E_i^* V )\\
&= (\Delta - I)(U_0+U_1+\cdots +U_i)\\
&\subseteq U_0+U_1+\cdots +U_{i-1}\\
&=E_0^* V+E_1^* V+\cdots +E_{i-1}^* V.
\end{align*}
We now show (\ref{Delta2}).  Applying Theorem \ref{thm:U}(iii) to both $\Phi$ and $\Phi^{\Downarrow}$, and also using (\ref{eq:DeltaU1}), we obtain
\begin{align*}
\Delta (E_iV+E_{i+1}V+\cdots+E_d V)&= \Delta (U_i+U_{i+1}+\cdots+U_d)\\
&= U_i^{\Downarrow}+U_{i+1}^{\Downarrow}+\cdots+U_d^{\Downarrow}\\
&= E_0V+E_{1}V+\cdots+E_{d-i} V.
\end{align*}
\end{proof}

\medskip

We now show that (\ref{Delta1}), (\ref{Delta2}) characterize $\Delta$.

\medskip

\begin{lemma}
Given $\Delta'\in{\rm End}(V)$ such that 
\begin{align}
&(\Delta' -I)E_i^{*} V\subseteq E_0^{*}V+E_1^{*}V+\cdots +E_{i-1}^{*}V &\ (0\leq i \leq d),\label{Delta21}\\
&\Delta' (E_iV+E_{i+1}V+\cdots+E_d V)\subseteq E_0 V+E_{1}V+\cdots+E_{d-i} V &(0\leq i \leq d). \label{Delta22}
\end{align}
Then $\Delta'=\Delta$.
\end{lemma}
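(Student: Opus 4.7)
The plan is to reduce this to Lemma \ref{lemma:DD'}, which characterizes $\Delta$ via its action on the first split decomposition. In other words, I will show that $\Delta'$ automatically satisfies conditions (\ref{eq:DeltaU21}) and (\ref{eq:DeltaU22}), from which $\Delta'=\Delta$ is immediate. The key bridge between the two characterizations is Theorem \ref{thm:U}(iii), applied to both $\Phi$ and $\Phi^{\Downarrow}$, which translates partial sums of $U_i$ or $U_i^{\Downarrow}$ into spans of eigenspaces of $A$ and $A^*$.

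First I will verify condition (\ref{eq:DeltaU22}) of Lemma \ref{lemma:DD'}. From (\ref{Delta21}) one sees (by summing on $i$) that $(\Delta'-I)(E_0^*V+\cdots+E_i^*V)\subseteq E_0^*V+\cdots+E_{i-1}^*V$. Since $U_i\subseteq E_0^*V+\cdots+E_i^*V$ by Theorem \ref{thm:U}(iii), this gives
\[(\Delta'-I)U_i\subseteq E_0^*V+\cdots+E_{i-1}^*V=U_0+U_1+\cdots+U_{i-1},\]
again by Theorem \ref{thm:U}(iii). This is exactly (\ref{eq:DeltaU22}).

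Next I will verify (\ref{eq:DeltaU21}), namely $\Delta' U_i\subseteq U_i^{\Downarrow}$. The previous paragraph shows that $\Delta'$ maps $E_0^*V+\cdots+E_i^*V$ into itself. On the other hand, $U_i\subseteq E_iV+\cdots+E_dV$ by Theorem \ref{thm:U}(iii), so (\ref{Delta22}) gives $\Delta' U_i\subseteq E_0V+\cdots+E_{d-i}V$. Combining these two containments,
\[\Delta' U_i\subseteq (E_0^*V+\cdots+E_i^*V)\cap (E_0V+\cdots+E_{d-i}V).\]
By Theorem \ref{thm:U}(iii) applied to both $\Phi$ and $\Phi^{\Downarrow}$, the first factor equals $U_0^{\Downarrow}+\cdots+U_i^{\Downarrow}$ and the second equals $U_i^{\Downarrow}+U_{i+1}^{\Downarrow}+\cdots+U_d^{\Downarrow}$. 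The intersection collapses to $U_i^{\Downarrow}$ by the directness of the second split decomposition, giving (\ref{eq:DeltaU21}).

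With both conditions of Lemma \ref{lemma:DD'} verified, that lemma forces $\Delta'=\Delta$. The only conceptual step is the intersection identity $(U_0^{\Downarrow}+\cdots+U_i^{\Downarrow})\cap(U_i^{\Downarrow}+\cdots+U_d^{\Downarrow})=U_i^{\Downarrow}$, but this is immediate from directness; the rest is a routine translation between the two split-decomposition descriptions of the flags $\{E_0^*V+\cdots+E_i^*V\}$ and $\{E_0V+\cdots+E_{d-i}V\}$.
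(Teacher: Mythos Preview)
Your proof is correct and follows essentially the same approach as the paper: reduce to Lemma \ref{lemma:DD'} by verifying (\ref{eq:DeltaU21}) and (\ref{eq:DeltaU22}) via Theorem \ref{thm:U}(iii) applied to both $\Phi$ and $\Phi^{\Downarrow}$. One small simplification: the intersection $(E_0^*V+\cdots+E_i^*V)\cap(E_0V+\cdots+E_{d-i}V)$ is literally the definition (\ref{eq:Udd}) of $U_i^{\Downarrow}$, so the detour through directness of the second split decomposition is unnecessary.
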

\begin{proof}
By Lemma \ref{lemma:DD'}, it suffices to show that $\Delta'$ satisfies (\ref{eq:DeltaU21}) and (\ref{eq:DeltaU22}).  These lines are routinely verified using Theorem \ref{thm:U}(iii) (applied to both $\Phi$ and $\Phi^{\Downarrow}$) and Lemma \ref{lemma:sumUUdd}.
\end{proof}

\medskip

We now derive some relations involving $\Delta$ that will be of use later.

\begin{lemma}\label{lemma:RDelta}
With reference to Definition \ref{def:Delta}, 
\begin{equation*}
R^{\Downarrow }\Delta=\Delta R.
\end{equation*}
\end{lemma}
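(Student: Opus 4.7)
The plan is to expand both sides using the formula $\Delta = \sum_{h=0}^d F_h^{\Downarrow} F_h$ from Definition \ref{def:Delta} and then apply Lemma \ref{lemma:RRddF} termwise, taking care of the boundary contributions.

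First I would write
\begin{equation*}
R^{\Downarrow}\Delta = \sum_{h=0}^{d} R^{\Downarrow} F_h^{\Downarrow} F_h, \qquad \Delta R = \sum_{h=0}^{d} F_h^{\Downarrow} F_h R.
\end{equation*}
For the left sum, the $h=d$ term vanishes: since $F_d^{\Downarrow} F_d$ maps $V$ into $U_d^{\Downarrow}$ and $R^{\Downarrow}U_d^{\Downarrow}=0$ by the $\Phi^{\Downarrow}$-version of Lemma \ref{lemma:RULU}, we get $R^{\Downarrow}F_d^{\Downarrow}F_d=0$. For the right sum, the $h=0$ term vanishes: by Lemma \ref{lemma:FR}(i) we have $F_0 R = 0$, hence $F_0^{\Downarrow} F_0 R = 0$.

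Next I would apply Lemma \ref{lemma:RRddF} to each surviving term of the left sum, obtaining
\begin{equation*}
R^{\Downarrow}\Delta = \sum_{h=0}^{d-1} R^{\Downarrow} F_h^{\Downarrow} F_h = \sum_{h=0}^{d-1} F_{h+1}^{\Downarrow} F_{h+1} R = \sum_{k=1}^{d} F_k^{\Downarrow} F_k R = \Delta R,
\end{equation*}
where the last equality uses that the $k=0$ term is zero as noted above. This completes the argument.

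There is no real obstacle here; the only subtlety is verifying the two boundary cancellations so that the index shift coming from Lemma \ref{lemma:RRddF} matches up cleanly. Everything else is a direct substitution of previously established identities.
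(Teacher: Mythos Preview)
Your proof is correct and follows essentially the same approach as the paper: expand $\Delta$ via Definition \ref{def:Delta}, use Lemma \ref{lemma:FR}(i) to kill the boundary terms, and apply Lemma \ref{lemma:RRddF} to match up the remaining summands. The paper's version is terser but uses exactly these ingredients.
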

\begin{proof}
In the expression $R^{\Downarrow}\Delta-\Delta R$, eliminate $\Delta$ using Definition \ref{def:Delta}.  Simplify the result using Lemma \ref{lemma:FR}(i) and Lemma \ref{lemma:RRddF} to obtain $R^{\Downarrow}\Delta-\Delta R=0$.
\end{proof}

\medskip

\begin{lemma}\label{lemma:DeltaA-ADelta}
With reference to Definition \ref{def:Delta}, \begin{equation}
\Delta A - A\Delta = 
\sum_{h=0}^{d} ( \theta_h - \theta_{d-h}) F_h^{\Downarrow}F_h.\label{eq:DeltaA-ADelta}
\end{equation}
\end{lemma}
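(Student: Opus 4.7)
The plan is to derive the identity from the commutation relation already established in Lemma \ref{lemma:RDelta}, namely $R^{\Downarrow}\Delta = \Delta R$. Since $R = A - \sum_h \theta_h F_h$ and $R^{\Downarrow} = A - \sum_h \theta_{d-h} F_h^{\Downarrow}$ by Definition \ref{def:R} and (\ref{eq:defRdd}), substituting these into $R^{\Downarrow}\Delta - \Delta R = 0$ and separating out the terms involving $A\Delta$ and $\Delta A$ yields
\begin{equation*}
\Delta A - A\Delta \;=\; \sum_{h=0}^{d} \theta_h \, \Delta F_h \;-\; \sum_{h=0}^{d} \theta_{d-h}\, F_h^{\Downarrow}\Delta.
\end{equation*}

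Next I would use Lemma \ref{lemma:FDelta}, which gives $\Delta F_h = F_h^{\Downarrow}\Delta$, to replace $\Delta F_h$ in the first sum. This collapses the right-hand side to $\sum_{h=0}^{d}(\theta_h - \theta_{d-h})\, F_h^{\Downarrow}\Delta$.

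Finally, I would simplify $F_h^{\Downarrow}\Delta$. Using Definition \ref{def:Delta} and the orthogonality relation (\ref{eq:FF}) applied to $\Phi^{\Downarrow}$,
\begin{equation*}
F_h^{\Downarrow}\Delta \;=\; F_h^{\Downarrow}\sum_{k=0}^{d}F_k^{\Downarrow}F_k \;=\; \sum_{k=0}^{d}\delta_{hk}F_k^{\Downarrow}F_k \;=\; F_h^{\Downarrow}F_h,
\end{equation*}
which turns the expression into the desired $\sum_{h=0}^{d}(\theta_h - \theta_{d-h})\,F_h^{\Downarrow}F_h$. The argument is essentially bookkeeping; there is no serious obstacle, but one must be careful to invoke Lemma \ref{lemma:FDelta} in the correct direction so that $F_h^{\Downarrow}$ ends up on the left of $\Delta$, where the orthogonality of the $F_h^{\Downarrow}$ can be applied.
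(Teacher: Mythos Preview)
Your proof is correct and follows essentially the same route as the paper: start from $R^{\Downarrow}\Delta=\Delta R$ (Lemma~\ref{lemma:RDelta}), substitute the expressions for $R$ and $R^{\Downarrow}$ from Definition~\ref{def:R} and (\ref{eq:defRdd}), and then simplify using Definition~\ref{def:Delta} and (\ref{eq:FF}). The only cosmetic difference is that you invoke Lemma~\ref{lemma:FDelta} explicitly to merge the two sums before simplifying $F_h^{\Downarrow}\Delta$, whereas the paper appeals directly to Definition~\ref{def:Delta} and (\ref{eq:FF}) on each sum separately.
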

\begin{proof}
By Lemma \ref{lemma:RDelta},
\begin{equation}
\Delta R-R^{\Downarrow}\Delta=0.\label{eq:ADeltaPsiR2}
\end{equation}
In (\ref{eq:ADeltaPsiR2}), eliminate $R$ and $R^{\Downarrow}$ using  Definition \ref{def:R} and (\ref{eq:defRdd}) to get
\begin{equation}
\Delta A-A\Delta=\sum_{h=0}^d \theta_h\Delta F_h-\sum_{h=0}^d\theta_{d-h}F_h^{\Downarrow}\Delta.\label{eq:ADeltaPsiR3}
\end{equation}
Simplify the right-hand side of (\ref{eq:ADeltaPsiR3}) using Definition \ref{def:Delta} and (\ref{eq:FF}) to get the result.
\end{proof}

\medskip

We now express Lemma \ref{lemma:DeltaA-ADelta} from a slightly different perspective.

\begin{cor}\label{cor:ADelta-2}
With reference to Definition \ref{def:Delta}, \begin{equation*}
A - \Delta^{-1}A\Delta = 
\sum_{h=0}^{d} ( \theta_h - \theta_{d-h}) F_h.
\end{equation*}
\end{cor}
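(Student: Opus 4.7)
The plan is to obtain the corollary by left-multiplying the identity of Lemma \ref{lemma:DeltaA-ADelta} by $\Delta^{-1}$ and then simplifying the right-hand side using the explicit description of $\Delta^{-1}$ from Lemma \ref{lemma:Delta^(-1)}. Concretely, from
\begin{equation*}
\Delta A - A\Delta = \sum_{h=0}^{d}(\theta_h-\theta_{d-h})F_h^{\Downarrow}F_h
\end{equation*}
I would multiply on the left by $\Delta^{-1}$ to get
\begin{equation*}
A - \Delta^{-1}A\Delta = \sum_{h=0}^{d}(\theta_h-\theta_{d-h})\,\Delta^{-1}F_h^{\Downarrow}F_h.
\end{equation*}

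The entire content of the corollary then reduces to the single identity $\Delta^{-1}F_h^{\Downarrow}F_h = F_h$ for $0\leq h\leq d$. To verify this, I would use Lemma \ref{lemma:Delta^(-1)} to write $\Delta^{-1}=\Delta^{\Downarrow}=\sum_{j=0}^d F_jF_j^{\Downarrow}$, so that
\begin{equation*}
\Delta^{-1}F_h^{\Downarrow}F_h = \sum_{j=0}^{d} F_jF_j^{\Downarrow}F_h^{\Downarrow}F_h = F_hF_h^{\Downarrow}F_h,
\end{equation*}
where the last equality uses (\ref{eq:FF}) applied to $\Phi^{\Downarrow}$, collapsing the sum to the $j=h$ term. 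Finally, (\ref{eq:projprod2-1}) gives $F_hF_h^{\Downarrow}F_h = F_h$, as desired. Substituting back completes the proof.

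There is no real obstacle here; the corollary is a one-line manipulation once Lemma \ref{lemma:DeltaA-ADelta}, Lemma \ref{lemma:Delta^(-1)}, the orthogonality (\ref{eq:FF}), and the relation (\ref{eq:projprod2-1}) are in hand. The only mild subtlety is remembering to invoke $\Delta^{-1}=\Delta^{\Downarrow}$ explicitly so that the computation stays on the level of the projections $F_j$ and $F_j^{\Downarrow}$.
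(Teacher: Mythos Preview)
Your argument is correct and matches the paper's proof essentially verbatim: multiply (\ref{eq:DeltaA-ADelta}) on the left by $\Delta^{-1}$, expand $\Delta^{-1}=\sum_j F_jF_j^{\Downarrow}$, collapse via (\ref{eq:FF}) for $\Phi^{\Downarrow}$, and finish with (\ref{eq:projprod2-1}). The only cosmetic difference is that the paper cites Definition~\ref{def:Delta} (applied to $\Phi^{\Downarrow}$) rather than Lemma~\ref{lemma:Delta^(-1)} for the formula $\Delta^{-1}=\sum_j F_jF_j^{\Downarrow}$.
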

\begin{proof}
Apply $\Delta^{-1}$ to both sides of (\ref{eq:DeltaA-ADelta}).  
Simplify the resulting right-hand side using Lemma \ref{lemma:projprod2}, Definition \ref{def:Delta}, and (\ref{eq:FF}).
\end{proof}

\medskip

\begin{lemma}\label{lemma:LDelta}
With reference to Definition \ref{def:Delta}, 
\begin{equation}
L^{\Downarrow}\Delta-\Delta L=A^{*}\Delta-\Delta A^{*}.\label{eq:LDelta1}
\end{equation}
\end{lemma}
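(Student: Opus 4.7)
The plan is to unwind both sides using Definition \ref{def:R} (applied to $\Phi$ and to $\Phi^{\Downarrow}$) and then collapse the cross terms using Lemma \ref{lemma:FDelta}.

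First I would observe that since $\Phi^{\Downarrow}$ has the same dual eigenvalue sequence as $\Phi$ (only the ordering of the primitive idempotents of $A$ is reversed), Definition \ref{def:R} yields
\begin{equation*}
L = A^{*} - \sum_{h=0}^{d} \theta_h^{*} F_h, \qquad L^{\Downarrow} = A^{*} - \sum_{h=0}^{d} \theta_h^{*} F_h^{\Downarrow}.
\end{equation*}
Substituting these into the left-hand side of (\ref{eq:LDelta1}) gives
\begin{equation*}
L^{\Downarrow}\Delta - \Delta L = (A^{*}\Delta - \Delta A^{*}) - \sum_{h=0}^{d} \theta_h^{*} \bigl( F_h^{\Downarrow}\Delta - \Delta F_h\bigr).
\end{equation*}

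Next I would apply Lemma \ref{lemma:FDelta}, which says $F_h^{\Downarrow}\Delta = \Delta F_h$ for $0\le h\le d$. This makes every summand on the right vanish, leaving $L^{\Downarrow}\Delta - \Delta L = A^{*}\Delta - \Delta A^{*}$, as required.

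There is no real obstacle here; the only subtlety is making sure the formula for $L^{\Downarrow}$ uses the same scalars $\theta_h^{*}$ as $L$ (which is the whole point of the cancellation). Once that is noted, the identity reduces to the single fact that $\Delta$ intertwines $F_h$ and $F_h^{\Downarrow}$, which has already been established.
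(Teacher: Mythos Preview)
Your argument is correct and follows essentially the same approach as the paper: eliminate $L$ and $L^{\Downarrow}$ via Definition~\ref{def:R} and then cancel the resulting sum using Lemma~\ref{lemma:FDelta}.
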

\begin{proof}
In the left-hand side of (\ref{eq:LDelta1}), eliminate $L$ and $L^{\Downarrow}$ using Definition \ref{def:R}.  Evaluate the result using Lemma \ref{lemma:FDelta}.
\end{proof}

\medskip

\begin{lemma}\label{lemma:DeltaA^*U}
With reference to Definition \ref{def:Delta}, 
\begin{equation*}
(\Delta^{-1}A^{*}\Delta- A^{*})U_i\subseteq U_{i-1}\qquad\qquad (1\leq i\leq d),\qquad 
(\Delta^{-1}A^{*}\Delta- A^{*})U_0=0.
\end{equation*}
\end{lemma}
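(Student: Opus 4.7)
The plan is to rewrite $\Delta^{-1}A^*\Delta - A^*$ in a form where both summands visibly land in $U_{i-1}$. The natural tool is Lemma \ref{lemma:LDelta}, which gives
\begin{equation*}
A^*\Delta - \Delta A^* \;=\; L^{\Downarrow}\Delta - \Delta L.
\end{equation*}
Applying $\Delta^{-1}$ on the left yields
\begin{equation*}
\Delta^{-1}A^*\Delta - A^* \;=\; \Delta^{-1}L^{\Downarrow}\Delta - L,
\end{equation*}
and it suffices to check that the right-hand side sends $U_i$ into $U_{i-1}$ for $1\leq i\leq d$, and annihilates $U_0$.

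For the first term, I would use Lemma \ref{lemma:DeltaU} to get $\Delta U_i = U_i^{\Downarrow}$, then apply Lemma \ref{lemma:RULU} to $\Phi^{\Downarrow}$ to get $L^{\Downarrow}U_i^{\Downarrow}\subseteq U_{i-1}^{\Downarrow}$ (with $L^{\Downarrow}U_0^{\Downarrow}=0$), and finally use $\Delta^{-1}U_{i-1}^{\Downarrow}=U_{i-1}$ (which follows from $\Delta U_{i-1}=U_{i-1}^{\Downarrow}$ together with the invertibility of $\Delta$ from Lemma \ref{lemma:Delta^(-1)}). This chains to
\begin{equation*}
\Delta^{-1}L^{\Downarrow}\Delta\, U_i \;\subseteq\; \Delta^{-1}L^{\Downarrow}\,U_i^{\Downarrow} \;\subseteq\; \Delta^{-1}U_{i-1}^{\Downarrow} \;=\; U_{i-1},
\end{equation*}
with the chain collapsing to $0$ when $i=0$.

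For the second term, Lemma \ref{lemma:RULU} gives $LU_i\subseteq U_{i-1}$ for $1\leq i\leq d$ and $LU_0=0$. Combining both terms, their difference lies in $U_{i-1}$ for $1\leq i\leq d$ and vanishes on $U_0$, as required.

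There is no real obstacle: every ingredient is already packaged in the preceding lemmas, and the only slightly delicate point is recognizing that $\Delta^{-1}$ carries $U_{i-1}^{\Downarrow}$ back to $U_{i-1}$, which is immediate from the bijection $\Delta\colon U_{i-1}\to U_{i-1}^{\Downarrow}$ in Lemma \ref{lemma:DeltaU}. An alternative route avoiding Lemma \ref{lemma:LDelta} would be to take $v\in U_i$, write $A^*\Delta v = \theta_i^*\Delta v + w$ with $w\in U_{i-1}^{\Downarrow}$ (Theorem \ref{thm:U}(ii) for $\Phi^{\Downarrow}$) and $A^*v = \theta_i^* v + w'$ with $w'\in U_{i-1}$ (Theorem \ref{thm:U}(ii) for $\Phi$), then observe that $\Delta^{-1}A^*\Delta v - A^*v = \Delta^{-1}w - w'\in U_{i-1}$; this gives essentially the same proof without invoking the commutator identity.
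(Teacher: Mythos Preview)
Your proof is correct and follows essentially the same approach as the paper: both use Lemma~\ref{lemma:LDelta} to rewrite $\Delta^{-1}A^*\Delta - A^*$ as $\Delta^{-1}L^{\Downarrow}\Delta - L$, then track each summand using $\Delta U_i=U_i^{\Downarrow}$ and the lowering properties of $L,L^{\Downarrow}$ from Lemma~\ref{lemma:RULU}. Your alternative route via Theorem~\ref{thm:U}(ii) is also valid and amounts to the same computation.
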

\begin{proof}
By Lemma \ref{lemma:LDelta}, 
\begin{align*}
\Delta^{-1}A^{*}\Delta- A^{*}=\Delta^{-1}L^{\Downarrow}\Delta- L.
\end{align*}
Let $1\leq i\leq d$.  
By (\ref{eq:DeltaU1}) and (\ref{LU}) (applied to $\Phi^{\Downarrow}$), $\Delta^{-1}L^{\Downarrow}\Delta U_i \subseteq U_{i-1}$.
By (\ref{LU}), $L U_i \subseteq U_{i-1}$.
Thus $(\Delta^{-1}L^{\Downarrow}\Delta-L)U_i\subseteq U_{i-1}$.  By these comments, $(\Delta^{-1}A^{*}\Delta- A^{*})U_i\subseteq U_{i-1}$.

To obtain $(\Delta^{-1}A^{*}\Delta- A^{*})U_0=0$, use (\ref{LU}) (applied to both $\Phi$ and $\Phi^{\Downarrow}$) and (\ref{eq:DeltaU1}).
\end{proof}

\section{More on $\Delta$}
We continue to discuss the TD system $\Phi$ from Definition \ref{def:TDsys}.  
Recall the decomposition of $V$ given in Corollary \ref{cor:Urefine}.  
In this section, we consider the action of $\Delta$ on each of the summands of this decomposition.

\medskip

\begin{lemma}\label{lemma:proj}
Let $0\leq i\leq d/2$.  For $v\in K_i$ and $i\leq j\leq d-i$, both 
\begin{align}
&F_j^{\Downarrow}\tau_{ij}(A)v=\eta_{ij}(A)v, \label{eq:proj1}\\
&F_j\eta_{ij}(A)v=\tau_{ij}(A)v.\label{eq:proj2}
\end{align}
\end{lemma}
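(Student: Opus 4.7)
My plan is to derive both identities from a single observation: $\Delta$ restricted to $U_j$ acts as $F_j^{\Downarrow}$, and $\Delta^{-1}=\Delta^{\Downarrow}$ restricted to $U_j^{\Downarrow}$ acts as $F_j$. So it suffices to compute $\Delta\tau_{ij}(A)v$ and $\Delta^{-1}\eta_{ij}(A)v$ and show they equal $\eta_{ij}(A)v$ and $\tau_{ij}(A)v$ respectively.

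First I would establish that $\Delta v = v$ for $v \in K_i$. Since $K_i = U_i \cap U_i^{\Downarrow}$ by Lemma \ref{lemma:K}, we have $F_h v = \delta_{hi} v$ and $F_h^{\Downarrow}v = \delta_{hi} v$, so Definition \ref{def:Delta} immediately gives $\Delta v = F_i^{\Downarrow} F_i v = v$. Next I would use Lemma \ref{lemma:RDelta} iteratively: $\Delta R^{j-i} = (R^{\Downarrow})^{j-i}\Delta$, so applying to $v$ yields $\Delta R^{j-i}v = (R^{\Downarrow})^{j-i}v$. Since $v \in U_i$, Corollary \ref{cor:Rtau} gives $R^{j-i}v = \tau_{ij}(A)v$; since $v \in U_i^{\Downarrow}$, the analogous fact noted in Section \ref{section:Udd} gives $(R^{\Downarrow})^{j-i}v = \eta_{ij}(A)v$. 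Combining,
\begin{equation*}
\Delta\,\tau_{ij}(A)v = \eta_{ij}(A)v.
\end{equation*}

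For (\ref{eq:proj1}), note that $\tau_{ij}(A)v \in U_j$ by Lemma \ref{lemma:tauU_i}. For any $w \in U_j$ we have $F_h w = \delta_{hj}w$, so Definition \ref{def:Delta} gives $\Delta w = F_j^{\Downarrow}w$. Applied to $w = \tau_{ij}(A)v$, this yields $F_j^{\Downarrow}\tau_{ij}(A)v = \Delta\tau_{ij}(A)v = \eta_{ij}(A)v$.

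For (\ref{eq:proj2}), I would invoke the $\Downarrow$-symmetric argument. By Lemma \ref{lemma:Delta^(-1)}, $\Delta^{-1} = \Delta^{\Downarrow} = \sum_h F_h F_h^{\Downarrow}$, so for $w \in U_j^{\Downarrow}$ we get $\Delta^{-1}w = F_j w$. Rearranging Lemma \ref{lemma:RDelta} gives $\Delta^{-1}R^{\Downarrow} = R\,\Delta^{-1}$, hence $\Delta^{-1}(R^{\Downarrow})^{j-i}v = R^{j-i}\Delta^{-1}v = R^{j-i}v = \tau_{ij}(A)v$. Applying $\Delta^{-1}$ to $\eta_{ij}(A)v \in U_j^{\Downarrow}$ (via Lemma \ref{lemma:tauU_i} applied to $\Phi^{\Downarrow}$) therefore gives $F_j\eta_{ij}(A)v = \tau_{ij}(A)v$.

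There is no real obstacle here: the main conceptual step is recognizing that $v \in K_i$ is fixed by $\Delta$ and that $\Delta$ intertwines $R$ with $R^{\Downarrow}$, which together convert $\tau_{ij}(A)$-action on $v$ into $\eta_{ij}(A)$-action. The only care needed is to verify that $\Delta$ coincides with $F_j^{\Downarrow}$ (and $\Delta^{-1}$ with $F_j$) on the relevant subspace, which is immediate from the orthogonality relation (\ref{eq:FF}).
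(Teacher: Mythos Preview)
Your argument is correct, but it takes a different route from the paper's. The paper does not invoke $\Delta$ at all in proving this lemma: instead it notes that $\tau_{ij}-\eta_{ij}$ has degree at most $j-i-1$, hence lies in $\mathrm{Span}\{\eta_{ih}\}_{h=i}^{j-1}$; applying this to $v\in K_i\subseteq U_i^{\Downarrow}$ and using Lemma~\ref{lemma:tauU_i} for $\Phi^{\Downarrow}$ places $(\tau_{ij}(A)-\eta_{ij}(A))v$ in $U_i^{\Downarrow}+\cdots+U_{j-1}^{\Downarrow}$, which $F_j^{\Downarrow}$ kills, and $F_j^{\Downarrow}\eta_{ij}(A)v=\eta_{ij}(A)v$ since $\eta_{ij}(A)v\in U_j^{\Downarrow}$. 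Your approach instead exploits the intertwining relation $R^{\Downarrow}\Delta=\Delta R$ (Lemma~\ref{lemma:RDelta}) together with $\Delta v=v$ for $v\in K_i$ to obtain $\Delta\tau_{ij}(A)v=\eta_{ij}(A)v$, and then reads off (\ref{eq:proj1}) from $\Delta|_{U_j}=F_j^{\Downarrow}|_{U_j}$. This is a bit less elementary, since it relies on the earlier $\Delta$-machinery, but it is more conceptual and has the pleasant side effect of proving the content of Lemma~\ref{lemma:Delta1} directly---in the paper, Lemma~\ref{lemma:Delta1} is deduced \emph{from} Lemma~\ref{lemma:proj}, whereas your argument effectively reverses that order.
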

\begin{proof}
We first show (\ref{eq:proj1}).  
First suppose $i=j$.  Use (\ref{eq:F_i1}), Lemma \ref{lemma:K}, and the fact that both $\tau_{ii}$ and $\eta_{ii}$ equal 1.  
Now suppose $i<j$.  
By the comments following (\ref{eta}),
$\tau_{ij}-\eta_{ij}$ has degree at most $j-i-1$ and is therefore in $\text{Span}\{\eta_{ih}\}_{h=i}^{j-1}$.  
From this and Lemma \ref{lemma:tauU_i} (applied to $\Phi^{\Downarrow}$) we find that 
\begin{equation}
\left(\tau_{ij}(A)-\eta_{ij}(A)\right)v\in U_i^{\Downarrow}+U_{i+1}^{\Downarrow}+\cdots + U_{j-1}^{\Downarrow}.\label{eq:proj3}  
\end{equation}
Apply $F_j^{\Downarrow}$ to each side of (\ref{eq:proj3}).  
By Definition \ref{def:F_i} (applied to $\Phi^{\Downarrow}$), $F_j^{\Downarrow}$ applied to the right-hand side of (\ref{eq:proj3}) is zero.  By (\ref{eq:F_i1}) and Lemma \ref{lemma:tauU_i} (applied to $\Phi^{\Downarrow}$), $F^{\Downarrow}_j\eta_{ij}(A)v=\eta_{ij}(A)v$. 
Line (\ref{eq:proj1}) follows from the above comments.\\
Line (\ref{eq:proj2}) is similarly obtained.
\end{proof}

\begin{lemma} \label{lemma:Delta1}
For $0\leq i\leq d/2$ and $i\leq j\leq d-i$, let $\Delta_{ij}$ denote the restriction of $\Delta$ to the subspace $\tau_{ij}(A)K_i$.
Then the following diagram commutes.
\begin{diagram}[grid]
&& K_i &&  \\
&\ldTo(2,2)^{\tau_{ij}(A)} &&  \rdTo(2,2)^{\eta_{ij}(A)}&\\
\tau_{ij}(A)K_i  &&  \rTo^{\Delta_{ij}}   && \eta_{ij}(A)K_i \\
\end{diagram}
\end{lemma}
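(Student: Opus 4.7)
The plan is to unwind the definition of $\Delta$ directly on an element of the form $\tau_{ij}(A)v$ with $v\in K_i$, and use the projection formulas already established in Lemma \ref{lemma:proj}. The diagram commutes precisely when $\Delta \tau_{ij}(A)v = \eta_{ij}(A)v$ for every $v\in K_i$, so this is the single identity to verify.

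First I would fix $v\in K_i$ and note that $K_i\subseteq U_i$ by Lemma \ref{lemma:K}, so Lemma \ref{lemma:tauU_i} gives $\tau_{ij}(A)v\in U_j$. Applying the projections $\{F_h\}_{h=0}^d$ from Definition \ref{def:F_i}, this means $F_j\tau_{ij}(A)v = \tau_{ij}(A)v$ while $F_h\tau_{ij}(A)v = 0$ for every $h\neq j$. Next I would expand $\Delta$ using Definition \ref{def:Delta} and apply it to $\tau_{ij}(A)v$:
\begin{equation*}
\Delta\tau_{ij}(A)v = \sum_{h=0}^{d} F_h^{\Downarrow}F_h\tau_{ij}(A)v = F_j^{\Downarrow}\tau_{ij}(A)v,
\end{equation*}
where only the $h=j$ term survives by the previous step.

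The remaining step is to invoke Lemma \ref{lemma:proj}\,(\ref{eq:proj1}), which gives $F_j^{\Downarrow}\tau_{ij}(A)v = \eta_{ij}(A)v$. Combining the two equalities yields $\Delta\tau_{ij}(A)v = \eta_{ij}(A)v$, which simultaneously shows that $\Delta$ sends $\tau_{ij}(A)K_i$ into $\eta_{ij}(A)K_i$ (so that $\Delta_{ij}$ is well-defined as a map to $\eta_{ij}(A)K_i$) and that the triangle commutes.

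There is no real obstacle here: all the heavy lifting — namely the identification of $F_j^{\Downarrow}\tau_{ij}(A)$ with $\eta_{ij}(A)$ on $K_i$ — has already been packaged into Lemma \ref{lemma:proj}. The only thing to be careful about is to note that every element of $\tau_{ij}(A)K_i$ is of the form $\tau_{ij}(A)v$ with $v\in K_i$ (trivial from the definition), so the calculation above describes $\Delta_{ij}$ on all of $\tau_{ij}(A)K_i$ and the diagram as stated commutes.
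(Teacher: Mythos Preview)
Your proof is correct and follows essentially the same route as the paper: expand $\Delta$ via Definition \ref{def:Delta}, use $\tau_{ij}(A)v\in U_j$ (from Lemma \ref{lemma:tauU_i}) so that only the $h=j$ term survives, and then apply Lemma \ref{lemma:proj} to convert $F_j^{\Downarrow}\tau_{ij}(A)v$ into $\eta_{ij}(A)v$. There is nothing to add.
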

\begin{proof}
Let $v\in K_i$.  We push $v$ around the diagram.  
Observe that $\Delta_{ij}\tau_{ij}(A)v=\Delta\tau_{ij}(A)v$.  Consider $\Delta\tau_{ij}(A)v$.    In this expression, eliminate $\Delta$ using Definition \ref{def:Delta}.  Then simplify the result using Definition \ref{def:F_i}, Lemma \ref{lemma:tauU_i} (applied to both $\Phi$ and $\Phi^{\Downarrow}$), and Lemma \ref{lemma:proj}.
By these comments we find $\Delta\tau_{ij}(A)v=\eta_{ij}(A)v$.
\end{proof}

\medskip

We emphasize a point for later use.  
By Lemma \ref{lemma:Delta1}, we see that for $0\leq i\leq d/2$  and  $i\leq j\leq d-i$,
\begin{equation}
\Delta \tau_{ij}(A)v=\eta_{ij}(A)v \qquad \qquad\qquad (v\in K_i).\label{eq:Delta}
\end{equation}
Setting $j=i$ in the above argument, we see that 
\begin{equation}
(\Delta-I)K_i=0.\label{eq:DeltaK_i}
\end{equation}

\section{The linear transformation $\Psi$}\label{section:psi}
We continue to discuss the TD system $\Phi$ from Definition \ref{def:TDsys}.  
We now introduce a certain linear transformation $\Psi\in {\rm End}(V)$ which has properties that we find attractive.  To define $\Psi$ we give its action on each summand in the decomposition of $V$ from Corollary \ref{cor:Urefine}.  It will turn out that $\Psi$ is the unique linear transformation such that both 
\begin{align*}
\Psi E_iV&\subseteq E_{i-1}V+E_iV+E_{i+1}V,\\
\left(\Psi-\frac{\Delta-I}{\theta_0-\theta_d}\right)E_i^*V&\subseteq E_0^*V+E_1^*V+\cdots + E_{i-2}^*V  
\end{align*}
for $0\leq i\leq d$.  
This characterization of $\Psi$ will be discussed in Section \ref{section:anotherchar}.

\medskip
\begin{lemma}\label{lemma:psidef}
There exists a unique linear transformation $\Psi\in {\rm End}(V)$ such that
\begin{equation}
\Psi \tau_{ij}(A)-\left(\vartheta_j-\vartheta_i\right)\tau_{i,j-1}(A)\label{eq:psidef}
\end{equation}
vanishes on $K_i$ for $0\leq i\leq d/2$ and $i\leq j\leq d-i$.  Recall that $\tau_{i,i-1}=0$.
\end{lemma}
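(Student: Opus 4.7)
The plan is to use the direct sum decomposition from Corollary \ref{cor:Urefine},
\[
V=\bigoplus_{i=0}^{r}\bigoplus_{j=i}^{d-i}\tau_{ij}(A)K_i \qquad (r=\lfloor d/2\rfloor),
\]
to define $\Psi$ summand by summand and then argue that this is forced upon us.

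For uniqueness, suppose $\Psi\in{\rm End}(V)$ satisfies the vanishing condition in (\ref{eq:psidef}). Then for every $i$ with $0\leq i\leq d/2$, every $j$ with $i\leq j\leq d-i$, and every $v\in K_i$, the value $\Psi\bigl(\tau_{ij}(A)v\bigr)=(\vartheta_j-\vartheta_i)\tau_{i,j-1}(A)v$ is prescribed. Since the vectors $\{\tau_{ij}(A)v\}$ span $V$ as $(i,j,v)$ range over the decomposition above, $\Psi$ is determined on all of $V$.

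For existence, I would define $\Psi$ on each summand $\tau_{ij}(A)K_i$ as follows. By Lemma \ref{lemma:tauK}, the map $K_i\to\tau_{ij}(A)K_i$, $v\mapsto\tau_{ij}(A)v$, is a bijection, so each element of $\tau_{ij}(A)K_i$ has a unique preimage $v\in K_i$. Set
\[
\Psi\bigl(\tau_{ij}(A)v\bigr):=(\vartheta_j-\vartheta_i)\,\tau_{i,j-1}(A)v \qquad (v\in K_i).
\]
This is a well-defined linear map on $\tau_{ij}(A)K_i$, because it is the composition of the inverse bijection $\tau_{ij}(A)K_i\to K_i$ with the linear map $v\mapsto(\vartheta_j-\vartheta_i)\tau_{i,j-1}(A)v$ (with the convention $\tau_{i,i-1}=0$, which handles the boundary case $j=i$ consistently since $\vartheta_i-\vartheta_i=0$). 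Extend $\Psi$ to all of $V$ by linearity using the direct sum decomposition of Corollary \ref{cor:Urefine}; this extension is unambiguous precisely because the sum is direct.

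It is then immediate from the definition that the expression in (\ref{eq:psidef}) vanishes on $K_i$ for the given ranges of $i,j$. I do not expect any serious obstacle: the decomposition from Corollary \ref{cor:Urefine} together with the bijectivity statement of Lemma \ref{lemma:tauK} does all the heavy lifting, reducing the problem to specifying a linear map on each summand via an identified basis of images. The only mild subtlety is bookkeeping the boundary case $j=i$, where the prescribed image is zero on both sides and so the definition is consistent.
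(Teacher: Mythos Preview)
Your argument is correct and follows the same route as the paper: both use the direct sum decomposition of Corollary~\ref{cor:Urefine} to define $\Psi$ summand by summand, with the paper's proof being a terse two-line version of what you have spelled out carefully. Your explicit invocation of Lemma~\ref{lemma:tauK} to guarantee well-definedness on each summand is a welcome clarification of what the paper leaves implicit.
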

\begin{proof}
By Corollary \ref{cor:Urefine} the sum in (\ref{eq:Urefine1}) is a decomposition of $V$.  In the statement of the lemma, we specified the action of $\Psi$ on each summand and therefore $\Psi$ exists.
The uniqueness assertion is clear.
\end{proof}\\

We clarify the meaning of $\Psi$.  Fix an integer $i$ $(0\leq i\leq d/2)$.  Lemma \ref{lemma:psidef} implies that $\Psi K_i=0$.  More generally, for $i\leq j\leq d-i$ and $v\in K_i$,
\begin{equation}
\Psi \tau_{ij}(A)v = (\vartheta_j-\vartheta_i)\tau_{i,j-1}(A)v.\label{eq:psi-action}
\end{equation}\\  

We look at $\Psi$ from several perspectives.

\medskip

\begin{lemma}\label{lemma:psiU}
With reference to Lemma \ref{lemma:psidef},
\begin{equation*}
\Psi U_j  \subseteq U_{j-1} \qquad\qquad (1\leq j\leq d),\qquad \Psi U_0 =0.\label{eq:psiU111}
\end{equation*}
\end{lemma}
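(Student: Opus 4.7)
The plan is to exploit the refined decomposition of $V$ from Corollary~\ref{cor:Urefine}, together with the fact that $\Psi$ was defined directly on each summand $\tau_{ij}(A)K_i$. Since equation (\ref{eq:Udecomp}) expresses each $U_j$ in terms of these summands via
\begin{equation*}
U_j = \sum_{i=0}^{\min\{j,d-j\}} \tau_{ij}(A)K_i \qquad (\text{direct sum}),
\end{equation*}
it suffices to check that $\Psi$ sends each summand of $U_j$ into $U_{j-1}$.

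First I would dispose of the $j=0$ case. Here $U_0=\tau_{00}(A)K_0=K_0$, and for $v\in K_0$ the defining relation (\ref{eq:psi-action}) specializes (with $i=j=0$) to $\Psi v=(\vartheta_0-\vartheta_0)\tau_{0,-1}(A)v=0$, so $\Psi U_0=0$.

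Next, fix $j$ with $1\le j\le d$ and fix $i$ with $0\le i\le \min\{j,d-j\}$. Note that these inequalities give $i\le d/2$ and $i\le j\le d-i$, so equation (\ref{eq:psi-action}) applies: for $v\in K_i$,
\begin{equation*}
\Psi\tau_{ij}(A)v=(\vartheta_j-\vartheta_i)\tau_{i,j-1}(A)v.
\end{equation*}
If $i=j$, the scalar $\vartheta_j-\vartheta_i$ vanishes (and moreover $\tau_{j,j-1}=0$), so the image is $0\in U_{j-1}$. If $i<j$, I would use $K_i\subseteq U_i$ (by Lemma~\ref{lemma:K}) together with Lemma~\ref{lemma:tauU_i}, which gives $\tau_{i,j-1}(A)U_i\subseteq U_{j-1}$; consequently $\tau_{i,j-1}(A)v\in U_{j-1}$ and hence $\Psi\tau_{ij}(A)v\in U_{j-1}$.

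Summing over the valid indices $i$ yields $\Psi U_j\subseteq U_{j-1}$, completing the argument. There is no real obstacle here: the lemma is essentially a bookkeeping check. The only mild subtlety is making sure that when $i=j$ the formula (\ref{eq:psi-action}) is interpreted consistently with the convention $\tau_{i,i-1}=0$, which is why I would handle that boundary case explicitly rather than lumping it with $i<j$.
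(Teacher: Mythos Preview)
Your proof is correct and follows essentially the same route as the paper: decompose $U_j$ via (\ref{eq:Udecomp}) and apply the defining action (\ref{eq:psi-action}) of $\Psi$ on each summand, handling the boundary case $i=j$ (where $\Psi K_i=0$) separately and using Lemma~\ref{lemma:tauU_i} for $i<j$. The paper's presentation is marginally terser---it cites $\Psi K_i=0$ directly rather than re-deriving it from the vanishing scalar---but the substance is identical.
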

\begin{proof}
We first show $\Psi U_j\subseteq U_{j-1}$ for $1\leq j\leq d$.
Let $j$ be given.
Recall from (\ref{eq:Udecomp}) the direct sum $U_j=\sum_{i=0}^{\min\{j,d-j\}} \tau_{ij}(A) K_i$.  
Referring to this sum, we will show $\Psi$ sends each summand into $U_{j-1}$.  
Consider the $i^{th}$ summand $\tau_{ij}(A)K_i$.  First suppose $i=j$.  Then $\Psi$ sends this summand to zero because $\Psi K_i=0$.
Next suppose $i<j$.  Using Lemma \ref{lemma:tauU_i} and (\ref{eq:psi-action}), we obtain
\begin{equation*}
\Psi\tau_{ij}(A)K_i\subseteq \tau_{i,j-1}(A)K_i\subseteq U_{j-1}.
\end{equation*}

We now show $\Psi U_0=0$.  Recall that $\Psi K_0=0$.  The result follows since $K_0=U_0$.

%
%
\end{proof}

\begin{lemma}\label{lemma:psiproj}
With reference to Lemma \ref{lemma:psidef},
\begin{equation}
F_i\Psi=\Psi F_{i+1} \qquad\qquad (0\leq i\leq d-1),\qquad \Psi F_0=0,\qquad F_d\Psi=0.\label{eq:psiproj-1-1}
\end{equation}
\end{lemma}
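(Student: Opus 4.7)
The plan is to deduce all three identities directly from Lemma \ref{lemma:psiU} together with the defining properties of the projections $F_i$. The key point is that both sides of each asserted equality are linear transformations on $V$, and Theorem \ref{thm:U} gives a direct sum decomposition $V=\sum_{j=0}^d U_j$, so it suffices to verify each equation on each summand $U_j$.

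For the main identity $F_i\Psi=\Psi F_{i+1}$ with $0\leq i\leq d-1$, I fix $j$ and evaluate on $U_j$. On the right, $F_{i+1}U_j$ equals $U_{i+1}$ if $j=i+1$ and is $0$ otherwise, by Definition \ref{def:F_i}. On the left, Lemma \ref{lemma:psiU} gives $\Psi U_j \subseteq U_{j-1}$ for $j\geq 1$ and $\Psi U_0=0$, so $F_i\Psi U_j\subseteq F_iU_{j-1}$, which is $0$ unless $j-1=i$. Thus both sides vanish on $U_j$ whenever $j\neq i+1$. When $j=i+1$, the right side is $\Psi U_{i+1}$ and the left side is $F_i\Psi U_{i+1}\subseteq F_iU_i=U_i$; since $\Psi U_{i+1}\subseteq U_i$ already and $F_i$ acts as the identity on $U_i$ by \eqref{eq:F_i1}, the two sides coincide.

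The boundary equations fall out of the same analysis. For $\Psi F_0=0$, note that $F_0V=U_0$ by Definition \ref{def:F_i}, and $\Psi U_0=0$ by Lemma \ref{lemma:psiU}. For $F_d\Psi=0$, Lemma \ref{lemma:psiU} forces $\Psi V \subseteq U_0+U_1+\cdots+U_{d-1}$, and $F_d$ annihilates this subspace by \eqref{eq:F_i2}.

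There is no serious obstacle here; the lemma is essentially a bookkeeping translation of Lemma \ref{lemma:psiU} into the language of the idempotents $F_i$, analogous to Lemma \ref{lemma:FR}(i) for $R$. The only care required is to handle the endpoint cases $j=0$ and $j=d$ separately so that the containments $\Psi U_j\subseteq U_{j-1}$ are invoked only where they apply.
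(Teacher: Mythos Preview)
Your proof is correct and follows essentially the same approach as the paper: both arguments verify the identities on each summand $U_j$ of the split decomposition, using Lemma~\ref{lemma:psiU} together with the defining properties of the projections $F_i$. The only cosmetic difference is that the paper compresses your case analysis into the single rewriting $F_i\Psi-\Psi F_{i+1}=(F_i-I)\Psi-\Psi(F_{i+1}-I)$, whereas you treat $j=i+1$ and $j\neq i+1$ explicitly.
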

\begin{proof}
We first show that $F_i\Psi=\Psi F_{i+1}$ for $0\leq i\leq d-1$.  Let $i$ be given.
Recall the decomposition $\{U_j\}_{j=0}^d$ of $V$ from Theorem \ref{thm:U}.  We will show that $F_i\Psi-\Psi F_{i+1}$ vanishes on each $U_j$.   
Observe that 
\begin{equation}
F_i\Psi-\Psi F_{i+1}=(F_i-I)\Psi-\Psi (F_{i+1}-I).\label{eq:pp1}
\end{equation}
The right-hand side of (\ref{eq:pp1}) vanishes on $U_j$ by Definition \ref{def:F_i} and Lemma \ref{lemma:psiU}.  Thus $F_i\Psi-\Psi F_{i+1}$ vanishes $U_j$ and hence on $V$.  The equation on the left in (\ref{eq:psiproj-1-1}) follows from the above comments.

The assertions $\Psi F_0=0$, $F_d\Psi=0$ follow from Lemma \ref{lemma:psiU}.
\end{proof}

\medskip


\begin{lemma}\label{lemma:DeltaIPsi}
With reference to Definition \ref{def:Delta} and Lemma \ref{lemma:psidef}, 
for $0\leq j\leq d$ apply either of 
\begin{equation}
\Delta-I-(\theta_0-\theta_d)\Psi, \qquad\qquad \Delta^{-1}-I+(\theta_0-\theta_d)\Psi\label{eq:DeltaIPsi2-1}
\end{equation}
to $U_j$ and consider the image.  This image is contained in $U_0+U_1+\cdots + U_{j-2}$ if $j\geq 2$ and equals $0$ if $j<2$.
\end{lemma}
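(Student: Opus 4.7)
The plan is to verify both assertions summand by summand, using the direct sum decomposition (\ref{eq:Udecomp}), namely
\[
U_j = \sum_{i=0}^{\min\{j,d-j\}}\tau_{ij}(A)K_i.
\]
Fix $0\le i\le\min\{j,d-j\}$ and $v\in K_i$. By (\ref{eq:Delta}) and (\ref{eq:psi-action}), the actions of $\Delta$ and $\Psi$ on $\tau_{ij}(A)v$ become polynomial multipliers in $A$ applied to $v$. Moreover, by Corollary \ref{cor:Mv1} the family $\{\tau_{i,k}(A)v : i\le k\le d-i\}$ is a basis of $Mv$, so any polynomial in $A$ of degree at most $j-i-2$ applied to $v$ lies in $\sum_{k=i}^{j-2}\tau_{i,k}(A)K_i\subseteq U_0+\cdots+U_{j-2}$ by Lemma \ref{lemma:tauU_i}. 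When $j<2$ no such nonzero polynomial exists, forcing the result to be $0$.

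The crux is a coefficient comparison. By (\ref{tau})--(\ref{eta}) the polynomials $\tau_{ij}$ and $\eta_{ij}$ are both monic of degree $j-i$, and the coefficient of $x^{j-i-1}$ in $\eta_{ij}-\tau_{ij}$ is $\sum_{h=i}^{j-1}(\theta_h-\theta_{d-h})$, which telescopes via Definition \ref{def:vartheta} to $(\theta_0-\theta_d)(\vartheta_j-\vartheta_i)$. Since $\tau_{i,j-1}$ and $\eta_{i,j-1}$ are each monic of degree $j-i-1$, this yields two polynomials
\[
r_1 := \eta_{ij} - \tau_{ij} - (\theta_0-\theta_d)(\vartheta_j-\vartheta_i)\tau_{i,j-1}, \qquad r_2 := \tau_{ij} - \eta_{ij} + (\theta_0-\theta_d)(\vartheta_j-\vartheta_i)\eta_{i,j-1},
\]
each of degree at most $j-i-2$.

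For the first operator in (\ref{eq:DeltaIPsi2-1}), apply $\Delta$ to $\tau_{ij}(A)v$ via (\ref{eq:Delta}) to replace it by $\eta_{ij}(A)v$, and use (\ref{eq:psi-action}) to compute $(\theta_0-\theta_d)\Psi\tau_{ij}(A)v = (\theta_0-\theta_d)(\vartheta_j-\vartheta_i)\tau_{i,j-1}(A)v$; rearranging gives $\bigl(\Delta - I - (\theta_0-\theta_d)\Psi\bigr)\tau_{ij}(A)v = r_1(A)v$, which lies in $U_0+\cdots+U_{j-2}$ by the first paragraph. For the second operator, invert (\ref{eq:Delta}) to obtain $\Delta^{-1}\eta_{i,k}(A)v = \tau_{i,k}(A)v$ for $i\le k\le d-i$; applying $\Delta^{-1}$ to each term in the definition of $r_2$ yields $\Delta^{-1}r_2(A)v = \bigl(\Delta^{-1} - I + (\theta_0-\theta_d)\Psi\bigr)\tau_{ij}(A)v$, and expanding $r_2$ in the basis $\{\eta_{i,k}\}_{k=i}^{j-2}$ shows $\Delta^{-1}r_2(A)v$ is a combination of $\tau_{i,k}(A)v$ for $i\le k\le j-2$, again landing in $U_0+\cdots+U_{j-2}$. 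The single nontrivial step is the telescoping identification of the $x^{j-i-1}$-coefficient of $\eta_{ij}-\tau_{ij}$ with $(\theta_0-\theta_d)(\vartheta_j-\vartheta_i)$; once this is in hand, the rest is organizational bookkeeping around the decomposition (\ref{eq:Udecomp}).
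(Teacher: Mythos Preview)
Your argument is correct. For the first operator in (\ref{eq:DeltaIPsi2-1}) you do exactly what the paper does: work summand by summand over (\ref{eq:Udecomp}), use (\ref{eq:Delta}) and (\ref{eq:psi-action}) to reduce to the polynomial $r_1=\eta_{ij}-\tau_{ij}-(\theta_0-\theta_d)(\vartheta_j-\vartheta_i)\tau_{i,j-1}$, and then observe that $\deg r_1\le j-i-2$ so that (\ref{eq:A^kU}) (or equivalently Lemma \ref{lemma:tauU_i}) finishes it.

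For the second operator your route differs from the paper's. The paper avoids a second polynomial computation by writing
\[
\Delta^{-1}-I+(\theta_0-\theta_d)\Psi=\Delta^{-1}(\Delta-I)^2-\bigl(\Delta-I-(\theta_0-\theta_d)\Psi\bigr)
\]
and then invoking the already-proved case together with (\ref{eq:DeltaU2}) and Lemma \ref{lemma:Delta^{-1}U}. You instead introduce the companion polynomial $r_2=\tau_{ij}-\eta_{ij}+(\theta_0-\theta_d)(\vartheta_j-\vartheta_i)\eta_{i,j-1}$, check it also has degree $\le j-i-2$, and use the inverse of (\ref{eq:Delta}) to rewrite $(\Delta^{-1}-I+(\theta_0-\theta_d)\Psi)\tau_{ij}(A)v=\Delta^{-1}r_2(A)v$; expanding $r_2$ in the $\eta_{i,k}$ basis and applying $\Delta^{-1}\eta_{i,k}(A)v=\tau_{i,k}(A)v$ lands you in $U_i+\cdots+U_{j-2}$. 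This is a perfectly valid, more symmetric argument: it treats the two operators on the same footing rather than reducing one to the other. The paper's trick is a bit slicker (no second degree check), while yours makes the $\Phi\leftrightarrow\Phi^{\Downarrow}$ symmetry visible at the level of the proof.
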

\begin{proof}
We first consider 
the expression on the left in (\ref{eq:DeltaIPsi2-1}).  
Recall the direct sum $U_j=\sum_{i=0}^{\min\{j,d-j\}} \tau_{ij}(A) K_i$ from (\ref{eq:Udecomp}).  Consider a summand $\tau_{ij}(A)K_i$.  We show that the image of $\tau_{ij}(A)K_i$ under the expression on the left in (\ref{eq:DeltaIPsi2-1}) is contained in $U_0+U_1+\cdots + U_{j-2}$ if $j\geq 2$ and equals $0$ if $j<2$.  
By (\ref{eq:Delta}) and Lemma \ref{lemma:psidef}, the actions of the expression on the left in (\ref{eq:DeltaIPsi2-1}) times $\tau_{ij}(A)$ and 
\begin{equation}
\eta_{ij}(A)-\tau_{ij}(A)-(\theta_0-\theta_d)(\vartheta_j-\vartheta_i)\tau_{i,j-1}(A)\label{eq:DeltaIPsi2-2}
\end{equation}
agree on $K_i$.  
By (\ref{tau}), (\ref{eta}), and Definition \ref{def:vartheta},
(\ref{eq:DeltaIPsi2-2}) is a polynomial in $A$ of degree at most $j-i-2$ if $j\geq i+2$ and equals $0$ if $j<i+2$.  The result follows from the above comments and (\ref{eq:A^kU}).  

We now consider the expression on the right in (\ref{eq:DeltaIPsi2-1}).  We will use the fact that the result holds for the expression on the left in (\ref{eq:DeltaIPsi2-1}).
Observe that 
\begin{equation*}
\Delta^{-1}-I+(\theta_0-\theta_d)\Psi=\Delta^{-1}(\Delta-I)^2-\Delta+I+(\theta_0-\theta_d)\Psi.
\end{equation*}
The result follows from the above comments, (\ref{eq:DeltaU2}) and Lemma \ref{lemma:Delta^{-1}U}.
\end{proof}

\begin{lemma}\label{lemma:RPsi}
With reference to Lemma \ref{lemma:psidef},
$\Psi$ satisfies  
\begin{equation}
\Psi R - R\Psi = \sum_{h=0}^{d} \frac{ \theta_h - \theta_{d-h} }{ \theta_0 - \theta_d } F_h. \label{PsiR}
\end{equation}
\end{lemma}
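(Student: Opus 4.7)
The plan is to invoke the direct sum decomposition $V = \sum_{i=0}^{r}\sum_{j=i}^{d-i}\tau_{ij}(A)K_i$ from Corollary \ref{cor:Urefine} and show that both sides of (\ref{PsiR}) agree on each summand. Since $\tau_{ij}(A)K_i \subseteq U_j$ and each $F_h$ is the projection onto $U_h$, on such a summand the right-hand side acts as multiplication by the scalar $c_j := (\theta_j - \theta_{d-j})/(\theta_0 - \theta_d)$, which by (\ref{line:varthetadiff}) equals $\vartheta_{j+1}-\vartheta_j$. Hence it suffices to check that $(\Psi R - R\Psi)w = c_j w$ for every $w = \tau_{ij}(A)v$ with $v\in K_i$.

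For such a $w$ I would first compute $Rw$ by combining Lemma \ref{lemma:RA} (which gives $R|_{U_j}=A-\theta_jI$) with Lemma \ref{lemma:K2} (which gives $\tau_{i,d-i+1}(A)K_i=0$): this yields $Rw = \tau_{i,j+1}(A)v$ when $j<d-i$, and $Rw = 0$ when $j=d-i$. The defining identity (\ref{eq:psi-action}) then gives $\Psi Rw = (\vartheta_{j+1}-\vartheta_i)w$ in the former case and $0$ in the latter, and likewise $R\Psi w = (\vartheta_j-\vartheta_i)w$ when $j>i$ and $R\Psi w = 0$ when $j=i$ (using $\tau_{i,i-1}=0$). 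In the generic range $i<j<d-i$, subtraction produces $(\vartheta_{j+1}-\vartheta_j)w = c_jw$, as desired; the corner cases $j=i<d-i$ and $j=i=d-i$ are immediately checked, yielding $c_iw$ and $0=c_{d/2}w$ respectively.

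The only subtle case is $i<j=d-i$, where the computation yields $(\Psi R - R\Psi)w = (\vartheta_i-\vartheta_{d-i})w$ and I need this to equal $c_{d-i}w = -c_iw$; equivalently, $\vartheta_{d-i}-\vartheta_i = c_i$. By (\ref{line:varthetadiff}) this telescopes to $\sum_{h=i}^{d-i-1}c_h = c_i$, so it reduces to showing $\sum_{h=i+1}^{d-i-1}c_h = 0$. This combinatorial identity is the main obstacle, and I would settle it using the evident symmetry $c_h + c_{d-h} = 0$ built into the definition of $c_h$: the index set $\{i+1,\dots,d-i-1\}$ is invariant under $h \mapsto d-h$, so the contributions cancel in pairs, with the possible fixed point $h=d/2$ (when $d$ is even) contributing $c_{d/2}=0$. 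This closes the last case and completes the verification of (\ref{PsiR}) on every summand of the decomposition, hence on $V$.
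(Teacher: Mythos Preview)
Your proof is correct and follows essentially the same approach as the paper: both apply each side of (\ref{PsiR}) to a summand $\tau_{ij}(A)K_i$ of the decomposition from Corollary~\ref{cor:Urefine} and verify that each side acts there as the scalar $(\theta_j-\theta_{d-j})(\theta_0-\theta_d)^{-1}$. The paper declares the left-hand computation ``routine'' using (\ref{tau}), (\ref{line:varthetadiff}), Lemma~\ref{lemma:RA}, and Lemma~\ref{lemma:psidef}, whereas you spell out the case analysis explicitly; your symmetry argument $c_h+c_{d-h}=0$ in the boundary case $j=d-i$ amounts to a direct proof of the identity $\vartheta_{d-i}-\vartheta_i=(\theta_i-\theta_{d-i})/(\theta_0-\theta_d)$, which the paper later records separately as Lemma~\ref{lemma:varthetasym}.
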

\begin{proof}
Referring to the decomposition of $V$ given in Corollary \ref{cor:Urefine}, consider any summand $\tau_{ij}(A)K_i$.  We apply each side of (\ref{PsiR}) to this summand. We claim that on this summand, each side of (\ref{PsiR}) acts as $(\theta_j - \theta_{d-j})( \theta_0 - \theta_d )^{-1}I$.

The claim holds for the right-hand side of (\ref{PsiR}) by Definition \ref{def:F_i} and the fact that $\tau_{ij}(A)K_i\subseteq U_j$.  
Concerning the left-hand side of (\ref{PsiR}), we routinely carry out this application using (\ref{tau}), (\ref{line:varthetadiff}), Lemma \ref{lemma:RA}, and Lemma \ref{lemma:psidef}. 
\end{proof}

\medskip

\begin{cor}\label{cor:ADeltaPsiR}
With reference to Definition \ref{def:Delta} and Lemma \ref{lemma:psidef}, \begin{equation*}
\frac{A - \Delta^{-1}A\Delta}{\theta_0-\theta_d} = \Psi R-R\Psi. \label{eq:ADeltaRPsi}
\end{equation*}
\end{cor}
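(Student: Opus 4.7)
The plan is straightforward: this corollary is a direct consequence of two results already established in the text, namely Corollary \ref{cor:ADelta-2} and Lemma \ref{lemma:RPsi}. Both of these identities express a certain commutator-like quantity as the same weighted sum of projections $F_h$, up to the scalar factor $\theta_0-\theta_d$.

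More explicitly, Corollary \ref{cor:ADelta-2} states that
\[
A - \Delta^{-1}A\Delta = \sum_{h=0}^{d}(\theta_h - \theta_{d-h})F_h,
\]
while Lemma \ref{lemma:RPsi} states that
\[
\Psi R - R\Psi = \sum_{h=0}^{d}\frac{\theta_h - \theta_{d-h}}{\theta_0 - \theta_d}F_h.
\]
Dividing the first identity by the nonzero scalar $\theta_0-\theta_d$ (nonzero because the eigenvalues $\theta_0,\ldots,\theta_d$ of $A$ are mutually distinct, so in particular $\theta_0\neq\theta_d$), the right-hand side becomes exactly the right-hand side of the second identity. Equating the two left-hand sides then yields the claim.

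There is essentially no obstacle here; the corollary is a bookkeeping consequence of the two preceding identities. The only minor point worth noting is the use of $\theta_0\neq\theta_d$, which is standard for TD pairs of diameter $d\geq 1$ and is implicit in the assumption that $\{\theta_i\}_{i=0}^d$ is a sequence of distinct eigenvalues.
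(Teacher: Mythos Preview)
Your proposal is correct and follows exactly the same approach as the paper: the paper's proof consists solely of the sentence ``Use Corollary \ref{cor:ADelta-2} and Lemma \ref{lemma:RPsi}.'' Your write-up is slightly more explicit, but the argument is identical.
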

\begin{proof}
Use Corollary \ref{cor:ADelta-2} and Lemma \ref{lemma:RPsi}.
\end{proof}
 
 \medskip
 
 We now give a characterization of $\Psi$.
 
\begin{lemma}
Given $\Psi'\in {\rm End}(V)$ such that 
\begin{equation}
\Psi' R - R\Psi' = \sum_{h=0}^{d} \frac{ \theta_h - \theta_{d-h} }{ \theta_0 - \theta_d } F_h\label{eq:psichar1-1-1}
\end{equation}
and 
$\Psi' K_i=0$ for $0\leq i\leq d/2$.  Then $\Psi'=\Psi$.
\end{lemma}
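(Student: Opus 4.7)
The plan is to show that $\Psi'$ agrees with $\Psi$ on each summand of the direct sum decomposition
\[
V = \sum_{i=0}^{r}\sum_{j=i}^{d-i}\tau_{ij}(A)K_i
\]
from Corollary \ref{cor:Urefine}, where $r = \lfloor d/2 \rfloor$. Since the explicit formula (\ref{eq:psi-action}) already describes how $\Psi$ acts on each such summand, it will be enough to show that $\Psi'$ satisfies the same formula, namely $\Psi'\tau_{ij}(A)v = (\vartheta_j - \vartheta_i)\tau_{i,j-1}(A)v$ for $v \in K_i$ and $i \leq j \leq d-i$.

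The key move is to invoke Corollary \ref{cor:Rtau}, which, since $K_i \subseteq U_i$, yields $\tau_{ij}(A)v = R^{j-i}v$ for any $v \in K_i$. Setting $k = j-i$, this reduces the desired identity to
\[
\Psi' R^{k}v = (\vartheta_{i+k} - \vartheta_i)R^{k-1}v \qquad (v \in K_i,\; 0 \leq k \leq d-2i),
\]
where the $k=0$ case is interpreted as $\Psi' v = 0$ (recalling $\tau_{i,i-1} = 0$). I would establish this identity by induction on $k$. The base case $k=0$ is exactly the hypothesis $\Psi' K_i = 0$.

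For the inductive step, apply the commutation hypothesis (\ref{eq:psichar1-1-1}) to obtain
\[
\Psi' R^{k+1}v = R\Psi' R^{k}v + \sum_{h=0}^{d}\frac{\theta_h - \theta_{d-h}}{\theta_0 - \theta_d}F_h R^{k}v.
\]
By Lemma \ref{lemma:RULU} we have $R^{k}v \in U_{i+k}$, so Definition \ref{def:F_i} collapses the sum to the single term $\frac{\theta_{i+k} - \theta_{d-i-k}}{\theta_0 - \theta_d}R^{k}v$, which by (\ref{line:varthetadiff}) equals $(\vartheta_{i+k+1} - \vartheta_{i+k})R^{k}v$. Combined with $R\Psi' R^{k}v = (\vartheta_{i+k} - \vartheta_i)R^{k}v$ from the inductive hypothesis, this gives the desired $(\vartheta_{i+k+1} - \vartheta_i)R^{k}v$.

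I do not anticipate a genuine obstacle here: the two hypotheses on $\Psi'$ are tuned precisely so that one provides the base case of the induction and the other advances it, with Lemma \ref{lemma:RULU} ensuring that the right-hand side of the commutation relation collapses to a scalar multiple on each relevant subspace. The only conceptual point is to recognize that the identification $\tau_{ij}(A)v = R^{j-i}v$ on $K_i$ is exactly what converts the polynomial description of $\Psi$ into an induction that uses only the two stated hypotheses on $\Psi'$.
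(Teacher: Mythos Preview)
Your proof is correct and follows essentially the same approach as the paper: both argue by induction along the decomposition of Corollary~\ref{cor:Urefine}, using $\Psi' K_i = 0$ as the base and the commutation relation to advance. The paper's version is marginally slicker in that it works with the difference $\Psi - \Psi'$, observes (by subtracting \eqref{PsiR} from \eqref{eq:psichar1-1-1}) that this difference commutes with $R$, and thus avoids tracking the scalars $\vartheta_{i+k}-\vartheta_i$ through the induction; but the content is the same.
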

\begin{proof}
Recall from Corollary \ref{cor:Urefine} the decomposition $V=\sum_{i=0}^{r}\sum_{j=i}^{d-i} \tau_{ij}(A)K_i$, where $r=\lfloor d/2\rfloor$.  
We show that $\Psi-\Psi'$ vanishes on each summand by fixing $i$ and inducting on $j$.  
Let $i$ be given.  
Recall that $\Psi K_i=0$.  
Thus $\Psi-\Psi'$ vanishes on $\tau_{ii}(A)K_i=K_i$.  
Now suppose $\Psi-\Psi'$ vanishes on $\tau_{ij}(A)K_i$.  We show that $\Psi-\Psi'$ vanishes on $\tau_{i,j+1}(A)K_i$.  
By (\ref{PsiR}) and (\ref{eq:psichar1-1-1}), we see that 
\begin{equation*}
(\Psi-\Psi') R = R(\Psi -\Psi' ).\label{eq:RPsichar}
\end{equation*}
By the above comments, $\Psi-\Psi'$ vanishes on $R\tau_{ij}(A)K_i$.  
By (\ref{tau}) and Lemma \ref{lemma:RA}, $R\tau_{ij}(A)K_i=\tau_{i,j+1}(A)K_i$.  
Thus $\Psi-\Psi'$ vanishes on $\tau_{i,j+1}(A)K_i$.  
So $\Psi-\Psi'$ vanishes on $V$. 
\end{proof}

\medskip


\begin{lemma}\label{lemma:psiU2}
With reference to Definition \ref{def:Delta} and Lemma \ref{lemma:psidef}, 
$\Delta^{-1}A^*\Delta-A^*$ acts on $U_i$ as
\begin{equation*}
(\theta_{i-1}^*-\theta_i^*)(\theta_0-\theta_d)\Psi
\end{equation*} 
for $1\leq i\leq d$ and as $0$ for $i=0$.
\end{lemma}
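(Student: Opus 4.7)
The plan is to reduce the identity to a single commutator computation by using Lemma~\ref{lemma:DeltaIPsi} to expand $\Delta^{\pm 1}$ as $I \pm (\theta_0-\theta_d)\Psi$ plus a correction that lowers the index in the first split decomposition by at least two. The case $i=0$ is immediate: by Lemma~\ref{lemma:DeltaA^*U} the left-hand side vanishes on $U_0$, and by Lemma~\ref{lemma:psiU} so does $\Psi$. Fix $1 \leq i \leq d$. Both $\Delta^{-1}A^*\Delta - A^*$ and $\Psi$ send $U_i$ into $U_{i-1}$ (Lemma~\ref{lemma:DeltaA^*U} and Lemma~\ref{lemma:psiU}), so it suffices to verify, for each $v \in U_i$, the equality $F_{i-1}(\Delta^{-1}A^*\Delta - A^*)v = (\theta_{i-1}^*-\theta_i^*)(\theta_0-\theta_d)\Psi v$, since the right-hand side lies in $U_{i-1}$ and so is fixed by $F_{i-1}$.

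Set $X = \Delta - I - (\theta_0-\theta_d)\Psi$ and $Y = \Delta^{-1} - I + (\theta_0-\theta_d)\Psi$, so that Lemma~\ref{lemma:DeltaIPsi} guarantees $XU_j,\,YU_j \subseteq U_0+U_1+\cdots+U_{j-2}$ (interpreted as $0$ for $j<2$). Multiplying out $(I-(\theta_0-\theta_d)\Psi+Y)\,A^*\,(I+(\theta_0-\theta_d)\Psi+X)$ and subtracting $A^*$ yields seven summands: the commutator $(\theta_0-\theta_d)[A^*,\Psi]$; two corrections $A^*X$ and $YA^*$; and four higher terms, each containing at least two factors drawn from $\Psi,\,X,\,Y$. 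Applying the result to $v \in U_i$ and repeatedly using $A^*U_h \subseteq U_{h-1}+U_h$ (Lemma~\ref{lemma:AU}) together with $\Psi U_h \subseteq U_{h-1}$ (Lemma~\ref{lemma:psiU}), one checks that every summand except $(\theta_0-\theta_d)[A^*,\Psi]v$ lands in $U_0+\cdots+U_{i-2}$ and is therefore annihilated by $F_{i-1}$.

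It remains to compute $F_{i-1}[A^*,\Psi]v$ for $v \in U_i$. By Lemma~\ref{lemma:RA} and Lemma~\ref{lemma:RULU}, on $U_{i-1}$ we have $A^* = L + \theta_{i-1}^*I$ with $LU_{i-1} \subseteq U_{i-2}$, so $F_{i-1}A^*\Psi v = \theta_{i-1}^*\Psi v$; likewise, writing $A^*v = \theta_i^*v + Lv$ with $Lv \in U_{i-1}$ gives $\Psi A^* v = \theta_i^*\Psi v + \Psi Lv$ with $\Psi Lv \in U_{i-2}$, so $F_{i-1}\Psi A^*v = \theta_i^*\Psi v$. Subtracting yields $F_{i-1}[A^*,\Psi]v = (\theta_{i-1}^*-\theta_i^*)\Psi v$, and multiplying by $(\theta_0-\theta_d)$ finishes the proof. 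The main obstacle is the careful bookkeeping in the second paragraph: one must verify that only the $[A^*,\Psi]$ term survives projection by $F_{i-1}$, with all other summands lowering the $U$-index by at least two and hence disappearing.
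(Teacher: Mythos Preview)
Your proof is correct and follows essentially the same strategy as the paper: both arguments show that the difference $\Delta^{-1}A^*\Delta - A^* - (\theta_{i-1}^*-\theta_i^*)(\theta_0-\theta_d)\Psi$ sends $U_i$ simultaneously into $U_{i-1}$ (via Lemma~\ref{lemma:DeltaA^*U} and Lemma~\ref{lemma:psiU}) and into $U_0+\cdots+U_{i-2}$ (via Lemma~\ref{lemma:DeltaIPsi}), forcing it to vanish. The only difference is bookkeeping---the paper rewrites this difference as a particular five-term sum, whereas you expand $\Delta^{-1}A^*\Delta$ directly from $\Delta^{\pm1}=I\pm(\theta_0-\theta_d)\Psi+\text{correction}$ and isolate the commutator $(\theta_0-\theta_d)[A^*,\Psi]$, which makes the origin of the factor $\theta_{i-1}^*-\theta_i^*$ a bit more visible.
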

\begin{proof}
First assume $1\leq i\leq d$.  For notational convenience, we abbreviate $\Omega=(\theta_0-\theta_d)\Psi$.  We will show that 
\begin{equation}
\Delta^{-1}A^*\Delta-A^*-(\theta_{i-1}^*-\theta_{i}^*)\Omega \label{eq:psiU2-31}
\end{equation}
vanishes on $U_i$.
To accomplish this, we show that the image of $U_i$ under (\ref{eq:psiU2-31}) is contained in both $U_{i-1}$ and $\sum_{h=0}^{i-2}U_h$.

We first show that the image of $U_i$ under (\ref{eq:psiU2-31}) is contained in $U_{i-1}$.  
This follows from Lemma \ref{lemma:DeltaA^*U} and Lemma \ref{lemma:psiU}.

We now show that the image of $U_i$ under (\ref{eq:psiU2-31}) is contained in $\sum_{h=0}^{i-2}U_h$.
Observe that (\ref{eq:psiU2-31}) is equal to 
\begin{equation}\label{eq:psiU2-41}
\begin{split}
\theta_{i-1}^*(\Delta^{-1}-I)\Omega \ +\ \Delta^{-1}(A^*-\theta_{i-1}^*I)\Omega \
&+\ (\Delta^{-1}-I)(A^*-\theta_i^*I)\\
&+\ \Delta^{-1}A^*(\Delta-I-\Omega)
\ +\ \theta_i^*(\Delta^{-1}-I+\Omega).
\end{split}
\end{equation}
We will argue that each of the five terms in this sum sends $U_i$ into $\sum_{h=0}^{i-2}U_h$.
We begin by recalling some facts.  
For $0\leq j\leq d$ each of 
\begin{equation*}
A^*-\theta_j^*I, \qquad \Delta-I, \qquad \Delta^{-1} - I,\qquad \Omega
\end{equation*}
sends $U_j$ into $\sum_{h=0}^{j-1}U_h$.  This is a consequence of Theorem \ref{thm:U}(ii), (\ref{eq:DeltaU2}), Lemma \ref{lemma:Delta^{-1}U} and Lemma \ref{lemma:psiU} respectively.
It follows from these comments that for $0\leq j\leq d$, each of $A^*$, $\Delta$, $\Delta^{-1}$, $\Omega$ sends $U_j$ into $\sum_{h=0}^{j}U_h$.  Using the above facts we find that each of 
\begin{equation*}
(\Delta^{-1}-I)\Omega, \qquad \Delta^{-1}(A^*-\theta_{i-1}^*I)\Omega, \qquad (\Delta^{-1}-I)(A^*-\theta_i^*I)
\end{equation*}
sends $U_i$ into $\sum_{h=0}^{i-2}U_h$.  
Thus each of the first three terms in the sum (\ref{eq:psiU2-41}) sends $U_i$ into $\sum_{h=0}^{i-2} U_h$.
By Lemma \ref{lemma:DeltaIPsi}, 
each of 
\begin{equation*}
\Delta-I-\Omega, \qquad \Delta^{-1}-I+\Omega
\end{equation*}
 sends $U_i$ into $\sum_{h=0}^{i-2}U_h$.  
 By the above facts, each of the last two terms in the sum (\ref{eq:psiU2-41}) sends $U_i$ into $\sum_{h=0}^{i-2} U_h$.
We have now shown that each of the five terms in the sum (\ref{eq:psiU2-41}) sends $U_i$ into $\sum_{h=0}^{i-2} U_h$.  Therefore, the image of $U_i$ under (\ref{eq:psiU2-41}) is contained in $\sum_{h=0}^{i-2} U_h$.

By the above comments and Theorem \ref{thm:U}, the expression (\ref{eq:psiU2-31}) vanishes on $U_i$.  The proof is complete for $1\leq i\leq d$.

The case when $i=0$ follows from Lemma \ref{lemma:DeltaA^*U}.
\end{proof}

\medskip

Combining Lemma \ref{lemma:psiU2} with Lemma \ref{lemma:LDelta}, we obtain the following corollary.

\begin{cor}
With reference to Definition \ref{def:Delta} and Lemma \ref{lemma:psidef}, 
$\Delta^{-1}L^{\Downarrow}\Delta-L$ acts on $U_i$ as
\begin{equation*}
(\theta_{i-1}^*-\theta_i^*)(\theta_0-\theta_d)\Psi
\end{equation*} 
for $1\leq i\leq d$ and as $0$ for $i=0$.
\end{cor}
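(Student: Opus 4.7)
The plan is to reduce this corollary to the preceding Lemma \ref{lemma:psiU2} via the identity in Lemma \ref{lemma:LDelta}. No new calculation of the action on $U_i$ is needed; the work has already been done for $\Delta^{-1}A^*\Delta - A^*$, and Lemma \ref{lemma:LDelta} is precisely the tool that converts that statement into one about $\Delta^{-1}L^{\Downarrow}\Delta - L$.

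First I would recall Lemma \ref{lemma:LDelta}, which says
\begin{equation*}
L^{\Downarrow}\Delta - \Delta L = A^*\Delta - \Delta A^*.
\end{equation*}
Since $\Delta^{-1}$ exists by Lemma \ref{lemma:Delta^(-1)}, left-multiplying both sides by $\Delta^{-1}$ yields
\begin{equation*}
\Delta^{-1}L^{\Downarrow}\Delta - L = \Delta^{-1}A^*\Delta - A^*.
\end{equation*}

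Next I would invoke Lemma \ref{lemma:psiU2}, which asserts exactly that the right-hand side $\Delta^{-1}A^*\Delta - A^*$ acts on $U_i$ as $(\theta_{i-1}^* - \theta_i^*)(\theta_0 - \theta_d)\Psi$ for $1 \leq i \leq d$ and as $0$ for $i = 0$. The conclusion for $\Delta^{-1}L^{\Downarrow}\Delta - L$ follows at once from the displayed equation. There is essentially no obstacle here; the entire content of the corollary lies in Lemma \ref{lemma:psiU2}, and this result simply repackages it using the commutation identity from Lemma \ref{lemma:LDelta}.
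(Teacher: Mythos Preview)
Your proof is correct and takes essentially the same approach as the paper, which simply says the corollary follows by combining Lemma \ref{lemma:psiU2} with Lemma \ref{lemma:LDelta}. You have spelled out the one-line manipulation (left-multiplying Lemma \ref{lemma:LDelta} by $\Delta^{-1}$) that the paper leaves implicit.
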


\section{The eigenvalue and dual eigenvalue sequences}
We continue to discuss the TD system $\Phi$ from Definition \ref{def:TDsys}.  
In Sections \ref{section:commute}, \ref{section:anotherchar}, and \ref{section:poly}, we will obtain some detailed results about $\Delta$ and $\Psi$.  
In order to do so, we must first recall some facts concerning the eigenvalues and dual eigenvalues of $\Phi$.  

\medskip

\begin{thm} {\rm \cite[Theorem 11.1]{Somealg}} \label{thm:recurrence}
The expressions
\begin{equation}
\frac{\theta_{i-2}-\theta_{i+1}}{\theta_{i-1}-\theta_i}, \hspace{.75 in} \frac{\theta^*_{i-2}-\theta^*_{i+1}}{\theta^*_{i-1}-\theta^*_i}\label{eq:commonval}
\end{equation}
are equal and independent of $i$ for $2\leq i\leq d-1$.  
\end{thm}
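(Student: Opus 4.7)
The plan is to derive a common three-term recurrence satisfied by both eigenvalue sequences and then read off the claim from it. Specifically, I would aim to produce scalars $\beta, \gamma, \gamma^*$ (depending only on the TD pair) such that
\begin{equation*}
\theta_{i-1}-\beta\theta_i+\theta_{i+1}=\gamma, \qquad \theta^*_{i-1}-\beta\theta^*_i+\theta^*_{i+1}=\gamma^*
\end{equation*}
for $1\le i\le d-1$. Granted this, subtracting consecutive instances for the $\theta_i$ kills the constant and gives $\theta_{i-2}-\theta_{i+1}=(1+\beta)(\theta_{i-1}-\theta_i)$, so the first ratio in \eqref{eq:commonval} is the constant $1+\beta$; the same manipulation on the $\theta^*_i$ yields the same value, proving both independence from $i$ and equality of the two ratios.

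To obtain the recurrences, the plan is to work inside the first split decomposition $\{U_i\}_{i=0}^d$ of Definition \ref{def:U}, which already organizes the interaction of $A$ and $A^*$ by Theorem \ref{thm:U} and Lemma \ref{lemma:AU}. Consider the operator $T_\beta:=A^2A^*-\beta AA^*A+A^*A^2$. From Theorem \ref{thm:U}(i),(ii) one sees that $T_\beta$ acts on $U_i$ with image spread across $U_{i-1}+U_i+U_{i+1}+U_{i+2}$, and the ``top'' component in $U_{i+2}$ is computable as a polynomial in $\theta_{i-1},\theta_i,\theta_{i+1}$. The idea is to choose $\beta$ so this top component is identically zero for all $i$, which is precisely the existence of a uniform three-term recurrence in the $\theta_j$. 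Reversing the roles of $A$ and $A^*$ (and working with the second split decomposition instead) yields the parallel recurrence for the $\theta^*_j$, and a careful comparison of the scalars obtained forces the same $\beta$ to appear in both.

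The main obstacle will be verifying that one and the same $\beta$ governs both sequences. A priori the $\theta_i$ and $\theta^*_i$ could satisfy unrelated recurrences; the content of the theorem is that the TD pair axioms entangle the two through the split decomposition. The cleanest route is probably to establish first the Askey--Wilson-type (tridiagonal) relations for $(A,A^*)$ — i.e.\ that $[A,T_\beta]$ is, up to lower-order terms, a scalar multiple of $A^*$, and symmetrically with $A\leftrightarrow A^*$ — and then read the recurrence off by testing against an eigenvector of $A$ in $V_i$ and projecting onto $V_{i\pm 1}$ using hypothesis (ii) of Definition \ref{def:tdp}. Producing these relations with a symmetric $\beta$ is the crux: it is essentially the content of axioms (ii) and (iii) being the ``same'' condition up to the swap $A\leftrightarrow A^*$, made quantitative via the structure of the split decomposition and the irreducibility axiom (iv), which prevents the recurrences from decoupling.
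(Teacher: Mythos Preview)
The paper does not supply a proof of this theorem; it is quoted from \cite[Theorem~11.1]{Somealg} without argument. So there is nothing in the paper to compare your proposal against, and I can only assess the proposal on its own terms.

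Your reduction step is fine: once one has the two three-term recurrences with a common $\beta$, the claim follows immediately. But your plan for producing the recurrences contains a computational error and leaves the essential point unresolved.

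The error: for $u\in U_i$, writing $A=\theta_jI+R$ and $A^*=\theta^*_jI+L$ on each $U_j$ via Theorem~\ref{thm:U}(i),(ii), one finds that the $U_{i+2}$-component of $T_\beta u$ equals $(\theta^*_i-\beta\theta^*_{i+1}+\theta^*_{i+2})\,R^2u$. The coefficient is in the \emph{dual} eigenvalues, not in $\theta_{i-1},\theta_i,\theta_{i+1}$ as you assert; moreover the relation it detects is homogeneous, not the affine recurrence $\theta^*_{j-1}-\beta\theta^*_j+\theta^*_{j+1}=\gamma^*$ you want. If you pass to $[A,T_\beta]$ as you later suggest, the top ($U_{i+3}$) component becomes
\[
\bigl((\theta^*_i-\theta^*_{i+3})-(\beta+1)(\theta^*_{i+1}-\theta^*_{i+2})\bigr)R^3u,
\]
and asking this to vanish for every $i$ is literally the assertion that the second ratio in \eqref{eq:commonval} equals $\beta+1$ for all $i$. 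That is the statement to be proved, so demanding it is circular.

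The gap: you correctly flag that obtaining a single $\beta$ governing both sequences (indeed, governing all $i$ in either sequence) is the crux, but nothing in your sketch supplies it. The split decomposition together with Definition~\ref{def:tdp}(ii),(iii) alone does not force it; one genuinely needs the irreducibility axiom (iv), typically through the raising/lowering bijections of Lemma~\ref{lemma:6.5}. In \cite{Somealg} the route is to first establish the full tridiagonal (Askey--Wilson) relations and then read off the eigenvalue recurrences; that derivation uses irreducibility in an essential way and is not a formality. Your final paragraph gestures at this step but does not carry it out.
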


\begin{definition}\label{def:beta}{\rm
We associate a scalar $\beta$ with $\Phi$ as follows.  If $d\geq 3$ let $\beta +1$ denote the common value of (\ref{eq:commonval}).  If $d\leq 2$ let $\beta$ denote any nonzero scalar in $\K$.  We call $\beta$ the {\it base} of $\Phi$.
}\end{definition}


\begin{thm}{\rm \cite[Theorem 11.2]{Somealg}}\label{thm:q}
With reference to Definition \ref{def:beta}, the following
{\rm (i)--(iv)} hold.
\begin{enumerate}
\item[{\rm (i)}] 
Suppose $\beta \neq\pm 2$, and pick 
$q \in 
\overline{\K}$ such that 
 $q+q^{-1}=\beta $. Then there exist scalars 
 $\alpha_1, \alpha_2, \alpha_3,
 \alpha^*_1, \alpha^*_2, \alpha^*_3$
 in  
$\overline{\K}$ such that
\begin{eqnarray*}
\theta_i &=& \alpha_1 + \alpha_2q^i + \alpha_3q^{-i},
\\
\theta^*_i &=& \alpha^*_1 + \alpha^*_2q^i + \alpha^*_3q^{-i},
\end{eqnarray*}
for $0 \leq i \leq d$.
Moreover $q^i\not=1$ for $1 \leq i \leq d$.
\item[{\rm (ii)}] Suppose $\beta = 2$ and ${\rm Char}(\K) \not=2$. Then there
exist scalars
 $\alpha_1, \alpha_2, \alpha_3, 
 \alpha^*_1, \alpha^*_2, \alpha^*_3 $  in $\K $ such that
\begin{eqnarray*}
\theta_i &=& \alpha_1 + \alpha_2 i + \alpha_3 i^2,
\\
\theta^*_i &=& \alpha^*_1 + \alpha^*_2 i + \alpha^*_3 i^2, 
\end{eqnarray*}
for $0 \leq i \leq d$.  
Moreover ${\rm Char}(\K)=0$
or ${\rm Char}(\K)>d$.
\item[{\rm (iii)}] Suppose $\beta = -2$ and  ${\rm Char}(\K) \not=2$. Then there
exist scalars
 $\alpha_1, \alpha_2, \alpha_3, 
 \alpha^*_1, \alpha^*_2, \alpha^*_3 $  in $\K $ such that
\begin{eqnarray*}
\theta_i &=& \alpha_1 + \alpha_2 (-1)^i + \alpha_3 i(-1)^i, 
\\
\theta^*_i &=& \alpha^*_1 + \alpha^*_2 (-1)^i + \alpha^*_3 i(-1)^i, 
\end{eqnarray*}
for $0 \leq i \leq d$.  
Moreover ${\rm Char}(\K)=0$
or ${\rm Char}(\K)>d/2$.
\item[{\rm (iv)}] Suppose $\beta = 0$ and  ${\rm Char}(\K) =2$. Then $d= 3$.
\end{enumerate}
\end{thm}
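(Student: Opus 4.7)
The plan is to deduce Theorem \ref{thm:q} from Theorem \ref{thm:recurrence} by converting the common-ratio hypothesis into a second-order linear recurrence on the first differences $d_i := \theta_i-\theta_{i-1}$, and then solving this recurrence by cases on $\beta$. Since the identical recurrence holds for $\{\theta_i\}_{i=0}^d$ and $\{\theta_i^*\}_{i=0}^d$, I would carry out the argument once on a generic sequence and invoke it twice.

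First I would rearrange $(\theta_{i-2}-\theta_{i+1})/(\theta_{i-1}-\theta_i)=\beta+1$ into the three-term recurrence
\[
d_{i+1} - \beta d_i + d_{i-1} = 0 \qquad (2 \leq i \leq d-1),
\]
whose characteristic polynomial is $x^2-\beta x+1$. The four conclusions of the theorem correspond to the four essentially different ways this polynomial factors, when combined with possible coalescences in low characteristic. In case (i), with $\beta\neq\pm 2$, the roots $q,q^{-1}\in\overline{\K}$ are distinct and nonzero, so $d_i=Aq^i+Bq^{-i}$; telescoping yields $\theta_i = \alpha_1 + \alpha_2 q^i + \alpha_3 q^{-i}$, and the claim $q^i\neq 1$ for $1\leq i\leq d$ follows from pairwise distinctness of the $\theta_i$ (forced by the fact that $E_0,\ldots,E_d$ are distinct primitive idempotents). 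In cases (ii) and (iii) the double root at $1$ or $-1$ forces $d_i=Aq^i+Biq^i$ with $q=1$ or $q=-1$ respectively, and a single summation produces the stated quadratic-in-$i$ or $(-1)^i$-twisted formulas. The characteristic restrictions in (ii), (iii) are extracted from the requirement that the resulting polynomial-in-$i$ expressions yield $d+1$ distinct values $\theta_0,\ldots,\theta_d$.

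The main obstacle is case (iv), where $\beta=0$ and ${\rm Char}(\K)=2$. Here the recurrence collapses to $d_{i+1}=d_{i-1}$, so the odd-indexed $d_i$ share a common value and similarly for the even-indexed ones. If $d\geq 4$ then
\[
\theta_4-\theta_0 \;=\; d_1+d_2+d_3+d_4 \;=\; 2(d_1+d_2) \;=\; 0
\]
in characteristic $2$, contradicting $\theta_4\neq\theta_0$. On the other hand the convention in Definition \ref{def:beta} that $\beta$ is a nonzero scalar when $d\leq 2$ excludes $d\leq 2$ here, leaving $d=3$. A subsidiary verification, needed in cases (ii)--(iv), is that the scalars $\alpha_j,\alpha_j^*$ lie in $\K$ and not merely in $\overline{\K}$; this follows because the $\theta_i\in\K$ determine the $\alpha_j$ via a nonsingular linear system over $\K$.
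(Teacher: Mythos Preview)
The paper does not supply its own proof of this statement; Theorem \ref{thm:q} is quoted verbatim from \cite[Theorem 11.2]{Somealg} and used as background. So there is no ``paper's proof'' to compare against.

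That said, your sketch is correct and is essentially the standard argument one finds in the source. Rewriting the common-ratio condition of Theorem \ref{thm:recurrence} as the three-term recurrence $d_{i+1}-\beta d_i+d_{i-1}=0$ on first differences, solving via the characteristic polynomial $x^2-\beta x+1$, and splitting into the four cases according to whether and how the roots $q,q^{-1}$ coalesce is exactly the right mechanism. Your extraction of the characteristic constraints in (ii), (iii) from the mutual distinctness of the $\theta_i$ is correct (for instance $\theta_p=\theta_0$ in case (ii) if ${\rm Char}(\K)=p\leq d$, and $\theta_{2p}=\theta_0$ in case (iii) if $p\leq d/2$), as is your argument for (iv). The only point you pass over lightly is the degenerate range $d\leq 2$, where Theorem \ref{thm:recurrence} is vacuous; but then there are at most three values $\theta_0,\theta_1,\theta_2$ to interpolate with three free parameters $\alpha_1,\alpha_2,\alpha_3$, so the claimed form holds trivially, and the condition $q^i\neq 1$ for $1\leq i\leq d\leq 2$ follows directly from $\beta\neq\pm 2$.
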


\medskip

\begin{lemma}{\rm \cite[Lemma 9.4]{2LT}}\label{lemma:thetaq}
With reference to Definition \ref{def:beta}, pick integers $i, j, r, s$
$(0\leq i,j,r,s\leq d)$ and assume $i+j=r+s$, $i\neq j$.  Then the following {\rm (i)--(iv)} hold.

\begin{enumerate}
\item[{\rm (i)}] Suppose $\beta\neq \pm 2$.  Then 
 \begin{equation*}
 \frac{\theta_r-\theta_s}{\theta_i-\theta_j}=\frac{q^r-q^s}{q^i-q^j},
\end{equation*}
where  $q+q^{-1}=\beta $.
\item[{\rm (ii)}] Suppose $\beta=2$ and ${\rm Char}(\K)\neq 2$.  Then
 \begin{equation*}
 \frac{\theta_r-\theta_s}{\theta_i-\theta_j}=\frac{r-s}{i-j}.
\end{equation*}
\item[{\rm (iii)}] Suppose $\beta=-2$ and ${\rm Char}(\K)\neq 2$.  Then
 \begin{equation*}
 \frac{\theta_r-\theta_s}{\theta_i-\theta_j}= \left\{
     \begin{array}{ll}
       (-1)^{r+i}\frac{r-s}{i-j} & \text{if  }\ i+j \text{ is even},\\
       (-1)^{r+i} & \text{if  }\ i+j \text{ is odd}.
     \end{array}
   \right.
   \end{equation*}
\item[{\rm (iv)}] Suppose $\beta = 0$ and
 ${\rm Char}(\K) =2$. 
 Then
\begin{equation*}
{{\theta_r-\theta_{s}}\over {\theta_i-\theta_j}}
\;=\; 
 \left\{ \begin{array}{ll}
            0  & \mbox{if $\;r=s$}, \\
	1  & \mbox{if $\;r\not=s$.}
				   \end{array}
				\right. 
\end{equation*}
\end{enumerate}
\end{lemma}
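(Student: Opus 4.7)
The plan is to invoke Theorem \ref{thm:q} and substitute the closed-form expressions for $\theta_i$ case by case, computing the ratio $(\theta_r-\theta_s)/(\theta_i-\theta_j)$ directly. The unifying observation is that in each of cases (i)--(iii), the difference $\theta_r-\theta_s$ should factor as a product of two terms, one depending on $(r,s)$ through a suitable skew function and one depending only on the sum $r+s$; under the hypothesis $i+j=r+s$, the latter factor cancels against the denominator, leaving exactly the claimed expression.

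For (i) I would write $\theta_r-\theta_s=\alpha_2(q^r-q^s)+\alpha_3(q^{-r}-q^{-s})$, use the identity $q^{-r}-q^{-s}=-q^{-(r+s)}(q^r-q^s)$ to pull out $q^r-q^s$, and observe that the remaining factor $\alpha_2-\alpha_3q^{-(r+s)}$ depends only on $r+s$. Case (ii) is analogous but easier: $\theta_r-\theta_s=(r-s)(\alpha_2+\alpha_3(r+s))$. For (iii) I would split on the parity of $r+s$. When $r+s$ is even, both $r,s$ share the same parity and $(-1)^r=(-1)^s$, so the $\alpha_2$-term drops and one obtains $\theta_r-\theta_s=(-1)^r\alpha_3(r-s)$, yielding the ratio $(-1)^{r+i}(r-s)/(i-j)$. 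When $r+s$ is odd, $r,s$ have opposite parity; a direct computation gives $\theta_r-\theta_s=(-1)^r(2\alpha_2+\alpha_3(r+s))$, and the factor in parentheses cancels against the analogous factor for $(i,j)$ to leave $(-1)^{r+i}$.

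Case (iv) is structurally different because Theorem \ref{thm:q}(iv) gives no closed form. Here $d=3$ and ${\rm Char}(\K)=2$, so I would argue by enumeration. The case $r=s$ is immediate from $\theta_r-\theta_s=0$. Otherwise $r+s\in\{1,2,3,4,5\}$; for every value except $3$ there is a unique unordered pair $\{r,s\}$ with $r\neq s$ and the prescribed sum, so the ratio is trivially $\pm 1$, which equals $1$ in characteristic $2$. The only nontrivial case is $r+s=3$, where the two unordered pairs are $\{0,3\}$ and $\{1,2\}$; specializing Theorem \ref{thm:recurrence} at $i=2$ with $\beta+1=1$ gives $\theta_0-\theta_3=\theta_1-\theta_2$, and signs disappear in characteristic $2$, so all four signed differences agree.

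The main obstacle is the parity bookkeeping in case (iii), where one must carefully verify that the sign $(-1)^{r+i}$ comes out correctly in both parity subcases and that the ``odd'' subcase really does collapse to a pure sign rather than a nontrivial quotient of linear terms. The computations in the other three cases are essentially one-line factorizations.
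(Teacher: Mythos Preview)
Your proposal is correct and matches the paper's approach: the paper's proof is simply ``Use Theorem~\ref{thm:q},'' and you have filled in precisely the substitutions and factorizations that this entails. The only minor deviation is that in case~(iv) you invoke Theorem~\ref{thm:recurrence} and Definition~\ref{def:beta} directly (since Theorem~\ref{thm:q}(iv) supplies no closed form), which is the natural thing to do and is entirely sound.
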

\begin{proof}
Use Theorem \ref{thm:q}.
\end{proof}

\section{Some scalars}\label{section:vartheta}
We continue to discuss the TD system $\Phi$ from Definition \ref{def:TDsys}. 
In Section \ref{section:prelim}, we used $\Phi$ to define the scalars $\{\vartheta_i\}_{i=0}^{d+1}$.  In this section we discuss some properties of these scalars which will be of use later.

\medskip

Recall from Definition \ref{def:vartheta} that
\begin{equation*}
\vartheta_i = \sum_{h=0}^{i-1}\frac{\theta_h-\theta_{d-h}}{\theta_0-\theta_d}\qquad\qquad (0\leq i\leq d+1).
\end{equation*}

We remark that 
$$\vartheta_0=0, \qquad \vartheta_1=1,\qquad \vartheta_d=1, \qquad \vartheta_{d+1}=0. $$
Moreover, 
\begin{equation}
\vartheta_i=\vartheta_{d-i+1}\qquad\qquad (0\leq i\leq d+1).\label{line:varthetasym2}
\end{equation}


\medskip
\begin{lemma}\label{lemma:varthetasym}
For $0\leq i \leq d$,
\begin{equation*}
\vartheta_{d-i}-\vartheta_i = \frac{\theta_i-\theta_{d-i}}{\theta_0-\theta_d}.
\end{equation*}
\end{lemma}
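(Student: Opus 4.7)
The plan is to reduce this to two facts that are already in hand: the symmetry relation (\ref{line:varthetasym2}), namely $\vartheta_i = \vartheta_{d-i+1}$, and the first-difference relation (\ref{line:varthetadiff}), namely $\vartheta_{i+1} - \vartheta_i = (\theta_i - \theta_{d-i})/(\theta_0 - \theta_d)$.

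First I would apply (\ref{line:varthetasym2}) with $i$ replaced by $d-i$. Since $0 \leq i \leq d$ ensures $0 \leq d-i \leq d+1$, this is a legal substitution and yields
\[
\vartheta_{d-i} \;=\; \vartheta_{d-(d-i)+1} \;=\; \vartheta_{i+1}.
\]
Substituting into $\vartheta_{d-i} - \vartheta_i$ gives
\[
\vartheta_{d-i} - \vartheta_i \;=\; \vartheta_{i+1} - \vartheta_i,
\]
and then (\ref{line:varthetadiff}) finishes the proof in one stroke, giving the claimed value $(\theta_i - \theta_{d-i})/(\theta_0-\theta_d)$.

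There is no real obstacle; the only thing to watch is that the relation (\ref{line:varthetasym2}) is stated for the full range $0 \leq i \leq d+1$, so the substitution $i \mapsto d-i$ is valid over the entire range $0 \leq i \leq d$ required by the lemma. The proof is therefore essentially one display of two equalities.
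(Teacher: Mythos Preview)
Your proof is correct and is exactly the approach the paper takes: the paper's proof simply reads ``Use (\ref{line:varthetadiff}) and (\ref{line:varthetasym2}),'' and you have spelled out precisely how those two ingredients combine.
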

\begin{proof}
Use (\ref{line:varthetadiff}) and (\ref{line:varthetasym2}).
\end{proof}

\medskip

We now express the $\vartheta_i$ in closed form.  

\medskip

\begin{lemma}\label{lemma:vartheta} {\rm \cite[Lemma 10.2]{2LT}}
With reference 
to Definition \ref{def:beta}, the following holds for $0\leq i\leq d+1$.
\begin{enumerate}
\item[{\rm (i)}] Suppose $\beta \neq\pm 2$. Then
\begin{equation*}
\vartheta_i= {{(1-q^i)(1-q^{d-i+1})}\over {(1-q)(1-q^d)}},
\end{equation*}
where  $q+q^{-1}=\beta $.
\item[{\rm (ii)}] Suppose $\beta = 2$ and 
${\rm Char}(\K) \not=2$. Then
\begin{equation*}
 \vartheta_i= {{i(d-i+1)}\over {d}}.
\end{equation*}
\item[{\rm (iii)}] Suppose $\beta = -2$,  
${\rm Char}(\K) \not=2$, and 
$\;d\;$ is odd. Then
\begin{equation*}
\vartheta_i=  \left\{ \begin{array}{ll}
                   0  & \mbox{if $\;i\; $ is even, } \\
				  1 & \mbox{if $\;i\;$ is odd.}
				   \end{array}
				\right. 
\end{equation*}                     
\item[{\rm (iv)}] Suppose $\beta = -2$, 
${\rm Char}(\K) \not=2$, and 
  $\;d\;$ is even. Then 
\begin{equation*}
\vartheta_i = 
  \left\{ \begin{array}{ll}
                   i/d  & \mbox{if $\;i\; $ is even, } \\
	(d-i+1)/d \quad 	   & \mbox{if $\;i\;$ is odd. }
				   \end{array}
				\right.  
\end{equation*}
\item[{\rm (v)}] Suppose $\beta = 0$, 
${\rm Char}(\K) =2$, and 
  $\;d=3$. Then 
\begin{equation*}
\vartheta_i = 
  \left\{ \begin{array}{ll}
                   0  & \mbox{if $\;i\; $ is even, } \\
	1 \quad 	   & \mbox{if $\;i\;$ is odd. }
				   \end{array}
				\right.  
\end{equation*}
\end{enumerate}
\end{lemma}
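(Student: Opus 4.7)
The plan is to evaluate each summand in Definition \ref{def:vartheta} using Lemma \ref{lemma:thetaq} and then perform an elementary finite sum. Specifically, I apply Lemma \ref{lemma:thetaq} with $r = h$, $s = d-h$, $i = 0$, $j = d$, for which $r+s = d = i+j$ and $i \neq j$ hold automatically. This turns $\frac{\theta_h-\theta_{d-h}}{\theta_0-\theta_d}$ into an explicit elementary expression in each of the five parameter regimes, and summing over $h = 0, 1, \ldots, i-1$ yields $\vartheta_i$.

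In case (i) the ratio equals $(q^h - q^{d-h})/(1-q^d)$, so $\vartheta_i$ splits into two geometric sums; combining $\sum_{h=0}^{i-1} q^h = (1-q^i)/(1-q)$ with $\sum_{h=0}^{i-1} q^{d-h} = q^{d-i+1}(1-q^i)/(1-q)$ and dividing by $1-q^d$ produces the claimed $(1-q^i)(1-q^{d-i+1})/((1-q)(1-q^d))$. Case (ii) reduces to the arithmetic sum $\frac{1}{d}\sum_{h=0}^{i-1}(d-2h) = i(d-i+1)/d$. In case (iii), $0+d$ is odd so Lemma \ref{lemma:thetaq}(iii) gives ratio $(-1)^h$, and the partial sums alternate between $0$ and $1$. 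Case (v) uses that $h \neq 3-h$ for every integer $h$, so Lemma \ref{lemma:thetaq}(iv) gives ratio $1$; hence $\vartheta_i = i$, which modulo $2$ gives the stated parity pattern.

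The one case requiring genuine bookkeeping is (iv): $d$ even, $\beta = -2$, ${\rm Char}(\K) \neq 2$. Here $0+d$ is even, so Lemma \ref{lemma:thetaq}(iii) gives ratio $(-1)^h(d-2h)/d$. I plan to pair consecutive summands, noting that the pair $h = 2k, 2k+1$ contributes $\frac{d-4k}{d} - \frac{d-4k-2}{d} = \frac{2}{d}$. For even $i = 2k$, summing $k$ such pairs yields $\vartheta_i = 2k/d = i/d$. For odd $i = 2k+1$, the same pair sum plus the unpaired term $(-1)^{2k}(d-4k)/d = (d-4k)/d$ equals $(d-2k)/d = (d-i+1)/d$, matching the formula. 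This parity split is the only step with real content; the entire lemma is ultimately a direct computation, and as a sanity check I would confirm that the boundary values $\vartheta_0 = 0$, $\vartheta_1 = 1$, $\vartheta_d = 1$, $\vartheta_{d+1} = 0$ and the symmetry $\vartheta_i = \vartheta_{d-i+1}$ recorded above are visibly consistent with each closed form.
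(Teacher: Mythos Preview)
Your proposal is correct and is exactly the approach the paper indicates: the paper's proof is the single sentence ``The above sums can be computed directly using Lemma~\ref{lemma:thetaq},'' and you have simply carried out that computation in each of the five cases. Your case-by-case arithmetic checks out, including the pairing argument in case~(iv).
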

\begin{proof}
The  above sums
can  be  computed directly using 
Lemma \ref{lemma:thetaq}.
\end{proof}

\medskip

\begin{cor}\label{cor:varthetaneq0}
With reference to Lemma \ref{lemma:vartheta}, 
assume we are in the situation of {\rm (i), (ii)} or {\rm (iv)}.  
Then $\vartheta_i\neq 0$ for $1\leq i\leq d$.
\end{cor}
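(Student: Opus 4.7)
The plan is to treat each of the three cases (i), (ii), (iv) of Lemma \ref{lemma:vartheta} separately, using the explicit closed form for $\vartheta_i$ together with the arithmetic constraints on the characteristic and on $q$ provided by Theorem \ref{thm:q}.

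For case (i) where $\beta\neq\pm 2$, we have $\vartheta_i=(1-q^i)(1-q^{d-i+1})/((1-q)(1-q^d))$. Since $1\leq i\leq d$ implies $1\leq d-i+1\leq d$, each of the four exponents $i$, $d-i+1$, $1$, $d$ lies in $\{1,\dots,d\}$. Theorem \ref{thm:q}(i) gives $q^j\neq 1$ for $1\leq j\leq d$, so every factor is nonzero in $\overline{\K}$, forcing $\vartheta_i\neq 0$.

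For case (ii) where $\beta=2$ and $\text{Char}(\K)\neq 2$, we have $\vartheta_i=i(d-i+1)/d$. Theorem \ref{thm:q}(ii) gives $\text{Char}(\K)=0$ or $\text{Char}(\K)>d$, so each of the integers $i$, $d-i+1$, $d$ (all lying in $\{1,\dots,d\}$) is nonzero in $\K$, hence $\vartheta_i\neq 0$.

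Case (iv) is the most delicate and is where I expect the main bookkeeping. Here $\beta=-2$, $d$ is even, and by Theorem \ref{thm:q}(iii) we have $\text{Char}(\K)=0$ or $\text{Char}(\K)>d/2$, together with $\text{Char}(\K)\neq 2$. Setting $p=\text{Char}(\K)$, I want to verify that $p$ divides none of the integers $i$ (when $1\leq i\leq d$ is even), $d-i+1$ (when $1\leq i\leq d$ is odd, which makes $d-i+1$ even), or $d$. Observe that each such integer $n$ is even with $1\leq n\leq d$. If $p=0$ there is nothing to check, so assume $p>d/2$; then $2p>d\geq n$, so any positive multiple of $p$ that is at most $d$ must equal $p$ itself. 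But $n$ is even while $p$ is an odd prime (since $p\neq 2$), so $p\neq n$. Hence $p\nmid n$, and the respective formulas $\vartheta_i=i/d$ or $\vartheta_i=(d-i+1)/d$ give nonzero scalars. This finishes the corollary.
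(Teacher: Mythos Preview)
Your proof is correct and follows exactly the approach the paper implicitly relies on: the corollary is stated in the paper without proof, as an immediate consequence of the closed forms in Lemma~\ref{lemma:vartheta} combined with the arithmetic constraints of Theorem~\ref{thm:q}. Your case analysis is precisely the detailed verification one would supply, and the parity argument in case~(iv) is handled correctly.
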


\medskip

When we were working with the eigenvalues of $\Phi$, a key feature was that they are mutually distinct.  So it is natural to ask if there are any duplications in the sequence $\{\vartheta_i\}_{i=0}^{d+1}$.  
In (\ref{line:varthetasym2}) we already saw that $\vartheta_i=\vartheta_{d-i+1}$ for $0\leq i\leq d+1$.  So we would like to know if  the 
$\{\vartheta_i\}_{i=0}^{r}$ are mutually distinct, where $r=\lfloor \frac{d+1}{2}\rfloor$.
It turns out that 
this is false 
in general, but something can be said in certain cases.  We now explain the details.  

\medskip

\begin{cor}\label{cor:varthetaj-i}
With reference 
to Definition \ref{def:beta}, the following holds for $0\leq i,j\leq d+1$.
\begin{enumerate}
\item[{\rm (i)}] Suppose $\beta \neq\pm 2$. Then
\begin{equation*}
\vartheta_i-\vartheta_j= \frac{\left(q^j-q^i\right)\left(1-q^{d-i-j+1}\right)}{\left(1-q\right)\left(1-q^d\right)}.
\end{equation*}
\item[{\rm (ii)}] Suppose $\beta = 2$ and 
${\rm Char}(\K) \not=2$. Then
\begin{equation*}
 \vartheta_i-\vartheta_j= \frac{(i-j)(d-i-j+1)}{d}.
\end{equation*}
\item[{\rm (iii)}] Suppose $\beta = -2$,  
${\rm Char}(\K) \not=2$, and 
$\;d\;$ odd. Then
\begin{equation*}
\vartheta_i-\vartheta_j=  \left\{ \begin{array}{ll}
                   0  & \mbox{if $\;i+j\; $ is even, } \\
				  (-1)^{j} & \mbox{if $\;i+j\;$ is odd.}
				   \end{array}
				\right. 
\end{equation*}                     
\item[{\rm (iv)}] Suppose $\beta = -2$, 
${\rm Char}(\K) \not=2$, and 
  $\;d\;$ even. Then 
\begin{equation*}
\vartheta_i-\vartheta_j = 
  \left\{ \begin{array}{ll}
                  (-1)^{j}\frac{i-j}{d}  & \mbox{if $\;i+j\; $ is even, } \\
	(-1)^{j}\frac{d-i-j+1}{d} \quad 	   & \mbox{if $\;i+j\;$ is odd. }
				   \end{array}
				\right.  
\end{equation*}
\item[{\rm (v)}] Suppose $\beta = 0$, 
${\rm Char}(\K) =2$, and 
  $\;d=3$. Then 
\begin{equation*}
\vartheta_i-\vartheta_j = 
  \left\{ \begin{array}{ll}
                   0  & \mbox{if $\;i+j\; $ is even, } \\
	1 \quad 	   & \mbox{if $\;i+j\;$ is odd. }
				   \end{array}
				\right.  
\end{equation*}
\end{enumerate}
\end{cor}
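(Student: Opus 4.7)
The plan is to reduce Corollary \ref{cor:varthetaj-i} to a direct case-by-case computation using the closed-form expressions for $\vartheta_i$ given in Lemma \ref{lemma:vartheta}. Since the five cases of Lemma \ref{lemma:vartheta} exhaust all the possibilities allowed by Theorem \ref{thm:q}, it suffices to verify the claimed formula for $\vartheta_i-\vartheta_j$ in each case separately.

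For case (i), I would start from $\vartheta_i=(1-q^i)(1-q^{d-i+1})/((1-q)(1-q^d))$, clear the common denominator, and expand the numerator of $\vartheta_i-\vartheta_j$ as
\[
(1-q^i)(1-q^{d-i+1})-(1-q^j)(1-q^{d-j+1}) = (q^j-q^i) + (q^{d-j+1}-q^{d-i+1}).
\]
The main computational point is the factorization
\[
(q^j-q^i) + (q^{d-j+1}-q^{d-i+1}) = (q^j-q^i)\bigl(1-q^{d-i-j+1}\bigr),
\]
which is verified by expanding the right-hand side: $(q^j-q^i) - q^{d-i-j+1}(q^j-q^i) = q^j - q^i - q^{d-i+1} + q^{d-j+1}$. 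This matches, giving the claim.

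For case (ii), I would plug in $\vartheta_i=i(d-i+1)/d$ and simplify:
\[
i(d-i+1)-j(d-j+1) = d(i-j) - (i^2-j^2) + (i-j) = (i-j)(d-i-j+1),
\]
yielding the formula. Cases (iii) and (v) are immediate from the two-valued formulas for $\vartheta_i$: when $i+j$ is even the parities of $i,j$ agree so $\vartheta_i-\vartheta_j=0$, and when $i+j$ is odd the value of $\vartheta_i-\vartheta_j$ is $\pm 1$, with sign $(-1)^j$ (checking the two subcases $j$ even, $j$ odd). For case (iv), the formula for $\vartheta_i$ branches on the parity of the index, so I would split into the four subcases determined by the parities of $i$ and $j$, subtract, and verify that the signed result matches the claim in each subcase.

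I do not expect any deep obstacle; every case is a direct calculation once the formulas in Lemma \ref{lemma:vartheta} are substituted. The only step with any subtlety is the algebraic factorization in case (i); the remaining four cases are short parity or polynomial manipulations.
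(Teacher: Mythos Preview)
Your proposal is correct and follows exactly the paper's approach: the paper's proof is simply ``Use Lemma \ref{lemma:vartheta},'' and you have spelled out the case-by-case substitutions and simplifications that this entails.
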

\begin{proof}
Use Lemma \ref{lemma:vartheta}.
\end{proof}


\begin{lemma}\label{lemma:vartheta=}
With reference to Lemma \ref{lemma:vartheta}, 
assume we are in the situation of {\rm (i), (ii)} or {\rm (iv)}.  
Then the following are equivalent for $0\leq i,j\leq d+1$.
\begin{enumerate}
\item[{\rm (i)}] $\vartheta_i=\vartheta_j$.
\item[{\rm (ii)}] $i=j$ or $i+j=d+1$.
\end{enumerate}
\end{lemma}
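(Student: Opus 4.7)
The plan is to combine the explicit formulas for $\vartheta_i - \vartheta_j$ from Corollary \ref{cor:varthetaj-i} with the symmetry identity (\ref{line:varthetasym2}), which states $\vartheta_i = \vartheta_{d-i+1}$. One direction of the equivalence is immediate: if $i = j$ then trivially $\vartheta_i = \vartheta_j$, while $i + j = d + 1$ gives $j = d - i + 1$ and so $\vartheta_i = \vartheta_j$ by (\ref{line:varthetasym2}). The real content is the converse, which I would handle case by case using Corollary \ref{cor:varthetaj-i}.

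In case (i), the formula factors as a product $(q^j - q^i)(1 - q^{d-i-j+1})$ over a nonzero denominator (here $q \neq 1$ since $\beta \neq \pm 2$, and $q^d \neq 1$ by Theorem \ref{thm:q}(i)), so $\vartheta_i = \vartheta_j$ forces $q^{i-j} = 1$ or $q^{d-i-j+1} = 1$. Since Theorem \ref{thm:q}(i) gives $q^k \neq 1$ for $1 \leq k \leq d$, the order of $q$ is at least $d+1$. Because $i - j$ and $d - i - j + 1$ both lie in $[-(d+1), d+1]$, the only ways either exponent is a multiple of the order are the value $0$ or $\pm(d+1)$; checking each possibility directly yields $i = j$ or $i + j = d + 1$.

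In case (ii), the formula $(i-j)(d-i-j+1)/d$ has nonzero denominator because ${\rm Char}(\K) = 0$ or ${\rm Char}(\K) > d$ (Theorem \ref{thm:q}(ii)). The same range argument applies: $|i - j|$ and $|d - i - j + 1|$ are at most $d + 1$, and since the characteristic is either zero or at least $d+1$, each factor can vanish in $\K$ only when it vanishes as an integer or equals $\pm(d+1)$; again all subcases collapse to $i = j$ or $i + j = d + 1$.

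The main obstacle is case (iv), where $\beta = -2$, ${\rm Char}(\K) \neq 2$, and $d$ is even. Here the formula depends on the parity of $i + j$: the relevant numerator is $i - j$ if $i + j$ is even and $d - i - j + 1$ if $i + j$ is odd. The key observation is that in both subcases the relevant numerator has the same parity as $d$, hence is even, so its absolute value is at most $d$ (not $d+1$). Combining ${\rm Char}(\K) > d/2$ with ${\rm Char}(\K) \neq 2$ shows that no odd prime $p > d/2$ can divide a nonzero even integer of absolute value $\leq d$, which forces the numerator to vanish in $\mathbb{Z}$, giving $i = j$ in the even-sum case and $i + j = d + 1$ in the odd-sum case. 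I also need $d \neq 0$ in $\K$ to justify dividing by it, which follows from the same combination of constraints on the characteristic.
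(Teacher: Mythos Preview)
Your proposal is correct and follows the same approach as the paper, which simply cites Theorem~\ref{thm:q} and Corollary~\ref{cor:varthetaj-i}; you have filled in the case analysis that the paper leaves to the reader. The parity argument in case~(iv)---observing that the relevant numerator is even, hence bounded by $d$, and that an odd prime $p>d/2$ cannot divide a nonzero even integer of absolute value at most $d$---is exactly the right way to handle the weaker characteristic bound there.
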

\begin{proof}
Use Theorem \ref{thm:q} and Corollary \ref{cor:varthetaj-i}.
\end{proof}

\medskip


We finish this section with a comment.

\medskip

\begin{lemma}\label{lemma:thetavartheta} 
For $0\leq i,j,r,s\leq d$ we have
\begin{equation*}
\left(\theta_r-\theta_s\right)\left(\vartheta_i-\vartheta_j\right)=\left(\theta_i-\theta_j\right)\left(\vartheta_r-\vartheta_s\right),
\end{equation*}
provided that $i+j=r+s$.
\end{lemma}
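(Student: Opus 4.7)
The plan is a direct case analysis on the base $\beta$, using the explicit closed-form formulas already available. First I dispose of the degenerate cases. If $i=j$, then $\theta_i-\theta_j=0$ and $\vartheta_i-\vartheta_j=0$, so both sides of the asserted identity vanish. The case $r=s$ is symmetric. So henceforth assume $i\neq j$ and $r\neq s$.

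For the nondegenerate case, my strategy is to exhibit $(\theta_r-\theta_s)(\vartheta_i-\vartheta_j)$ and $(\theta_i-\theta_j)(\vartheta_r-\vartheta_s)$ in parallel factored form, with a ``sum factor'' depending only on $i+j=r+s$ and a ``difference factor'' handled by Lemma \ref{lemma:thetaq}. In the generic case $\beta\neq\pm 2$, pick $q$ as in Theorem \ref{thm:q}(i). Then Corollary \ref{cor:varthetaj-i}(i) gives
\[
\vartheta_i-\vartheta_j \;=\; \frac{(q^j-q^i)(1-q^{d-i-j+1})}{(1-q)(1-q^d)},
\]
and the second factor depends only on $i+j$. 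Hence
\[
\frac{\vartheta_r-\vartheta_s}{\vartheta_i-\vartheta_j}\;=\;\frac{q^s-q^r}{q^j-q^i}\;=\;\frac{q^r-q^s}{q^i-q^j},
\]
which by Lemma \ref{lemma:thetaq}(i) equals $(\theta_r-\theta_s)/(\theta_i-\theta_j)$. Cross-multiplying yields the identity.

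The cases $\beta=2$ and $\beta=0$ (with the corresponding characteristic hypotheses) proceed identically, using parts (ii) and (v) of Lemma \ref{lemma:thetaq} and Corollary \ref{cor:varthetaj-i}; each time, the $\vartheta$-difference factors as a function depending only on $i+j$ times a difference term ($i-j$ or analogue) that matches the formula for $(\theta_r-\theta_s)/(\theta_i-\theta_j)$.

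The main obstacle, and the most delicate case, is $\beta=-2$ with $\mathrm{Char}(\K)\neq 2$, since there the differences can vanish even when $i\neq j$. Here I would split on the parity of $d$ (parts (iii) and (iv) of Lemma \ref{lemma:thetaq} and Corollary \ref{cor:varthetaj-i}). The key observation is that the vanishing condition and the sign pattern for $\vartheta_i-\vartheta_j$ depend only on the parity of $i+j$, which equals that of $r+s$ under the hypothesis $i+j=r+s$. Thus whenever $\vartheta_i-\vartheta_j=0$ we also have $\vartheta_r-\vartheta_s=0$, and both sides of the identity vanish; when the $\vartheta$-differences are nonzero, the formulas produce matching scalars $(-1)^{j}$ and $(-1)^{s}$, and combining with $(-1)^{r+i}$ from Lemma \ref{lemma:thetaq}(iii) one checks, using $r+s=i+j$ to rewrite $r+i\equiv s-j\pmod 2$, that the signs line up to give equality. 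The $d$ even subcase is the same bookkeeping with the extra $(i-j)/d$ or $(d-i-j+1)/d$ prefactors, which once again separate cleanly into a ``sum'' part and a ``difference'' part.
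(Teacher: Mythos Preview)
Your approach is exactly the paper's: the proof there reads in full ``Use Lemma \ref{lemma:thetaq} and Corollary \ref{cor:varthetaj-i},'' and you have correctly unpacked what that entails case by case.

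One small gap to patch in your generic case $\beta\neq\pm 2$: you form the ratio $(\vartheta_r-\vartheta_s)/(\vartheta_i-\vartheta_j)$, but $\vartheta_i-\vartheta_j$ can vanish even when $i\neq j$, namely when $i+j=d+1$ (see (\ref{line:varthetasym2}) or Lemma \ref{lemma:vartheta=}). In that event $r+s=d+1$ as well, so $\vartheta_r-\vartheta_s=0$ and both sides of the identity are zero; you handled exactly this phenomenon in the $\beta=-2$ case but should note it here too. Alternatively, avoid division entirely by cross-multiplying from the start: the factored form of Corollary \ref{cor:varthetaj-i}(i) gives $(\vartheta_i-\vartheta_j)(q^r-q^s)=(\vartheta_r-\vartheta_s)(q^i-q^j)$ directly, and combining with $(\theta_r-\theta_s)(q^i-q^j)=(\theta_i-\theta_j)(q^r-q^s)$ from Lemma \ref{lemma:thetaq}(i) and cancelling the nonzero factor $(q^i-q^j)(q^r-q^s)$ yields the result with no case split.
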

\begin{proof}
Use Lemma \ref{lemma:thetaq} and Corollary \ref{cor:varthetaj-i}.
\end{proof}

\section{The scalars $[r,s,t]$}
We continue to discuss the TD system $\Phi$ from Definition \ref{def:TDsys}.  To motivate our results in this section, for the moment fix an integer $i$ $(0\leq i\leq d/2)$.  
As we proceed, it will be convenient to express each of $\{\tau_{ij}\}_{j=i}^{d-i}$ as a linear combination of $\{\eta_{ij}\}_{j=i}^{d-i}$.
In order to describe the coefficients, we will use the following notation.\\

For all $a,q\in\overline{\K}$ define
\begin{align}
(a;q)_n=(1-a)(1-aq)\cdots (1-aq^{n-1}), & \qquad \qquad n=0,1,2,\mathellipsis \label{eq:aq_n}
\end{align}
and interpret $(a;q)_0=1$.\\

In \cite{LP24} Terwilliger defined some scalars $[r, s, t]_q \in\K$ for nonnegative integers $r, s, t$ such that $r+s+t \leq d$.  By \cite[Lemma 13.2]{LP24} these scalars are rational functions of the base $\beta$.  
In this paper we are going to drop the subscript $q$ and just write $[r, s, t]$.
For further discussion of these scalars see \cite{DP} and \cite{LP24}.  

\medskip
\begin{definition}\label{def:triplebracket}{\rm \cite[Lemma 13.2]{LP24}} {\rm
With reference to Definition \ref{def:beta},
let $r,s,t$ denote nonnegative integers such that $r+s+t\leq d$.
We define $\lbrack r,s,t \rbrack$ as follows.
\begin{itemize}
\item[\rm (i)]
Suppose $\beta\neq\pm 2$.  Then
\begin{eqnarray*}
 \lbrack r,s,t\rbrack= 
\frac{
(q;q)_{r+s}
(q;q)_{r+t}
(q;q)_{s+t}
}
{
(q;q)_{r}
(q;q)_{s}
(q;q)_{t}
(q;q)_{r+s+t}
},
\end{eqnarray*}
where $q+q^{-1}=\beta$.
\item[\rm (ii)] 
Suppose $\beta=2$ and ${\rm Char}(\K)\neq 2$.  Then
\begin{equation*}
\lbrack r,s,t \rbrack= {{(r+s)!\,(r+t)!\,(s+t)!}\over{r!\,s!\,t!\,(r+s+t)!}}.
\end{equation*}
\item[\rm (iii)] 
Suppose $\beta=-2$ and ${\rm Char}(\K)\neq 2$.  If each of $r,s,t$ is odd, then
$\lbrack r,s,t\rbrack =0$. If at least one of $r,s,t$ is even,
then
\begin{equation*}
\lbrack r,s,t \rbrack = 
\frac{\lfloor \frac{r+s}{2}\rfloor ! 
\lfloor \frac{r+t}{2}\rfloor !
\lfloor \frac{
s+t}{2}\rfloor !}{\lfloor \frac{r}{2}\rfloor ! 
\lfloor \frac{s}{2}\rfloor ! 
\lfloor \frac{t}{2}\rfloor !  \lfloor \frac{r+s+t}{2} \rfloor !} .
\end{equation*}
The expression $\lfloor x \rfloor $ denotes the greatest integer less
than or equal to $x$.
\item[\rm (iv)] 
Suppose $\beta=0$, ${\rm Char}(\K)=2$, and $d=3$.
If each of $r,s,t$ equals 1, then $\lbrack r,s,t\rbrack =0$.
If at least one of $r,s,t$ equals 0, then $\lbrack r,s,t \rbrack=1$.
\end{itemize}
}\end{definition}

\medskip

We make a few observations.  The expression $[r,s,t]$ is symmetric in $r$, $s$, $t$.
Also, $[r,s,t]=1$ if at least one of $r, s, t$ equals zero.

\medskip
\begin{lemma}{\rm \cite[Lemma 5.3]{DP}}\label{lemma:rstu}
Let $r,s,t,u$ denote nonnegative integers such that $r+s+t+u\leq d$.  Then
\begin{equation*}
[r,s,t+u][t,u,r+s]=[s,u,r+t][r,t,s+u].
\end{equation*}
\end{lemma}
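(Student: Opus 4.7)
The plan is to verify the identity case by case, according to the four mutually exclusive cases of Definition \ref{def:triplebracket}.

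First I would handle the generic case (i), where $\beta \neq \pm 2$ and we have a product formula in terms of the $q$-Pochhammer symbol. A direct computation shows that
\begin{equation*}
[r,s,t+u]\,[t,u,r+s] = \frac{(q;q)_{r+t+u}(q;q)_{s+t+u}(q;q)_{r+s+t}(q;q)_{r+s+u}}{(q;q)_r(q;q)_s(q;q)_t(q;q)_u\,(q;q)_{r+s+t+u}^{2}}
\end{equation*}
after the cancellations of $(q;q)_{t+u}$ and $(q;q)_{r+s}$ between numerator and denominator. The same calculation applied to $[s,u,r+t]\,[r,t,s+u]$ (cancelling $(q;q)_{s+u}$ and $(q;q)_{r+t}$) yields the same expression. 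Case (ii) ($\beta = 2$, ${\rm Char}(\K)\neq 2$) is formally identical with $(q;q)_n$ replaced by $n!$, so the same cancellation pattern gives the result.

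Next I would treat case (iii), where $\beta=-2$ and ${\rm Char}(\K)\neq 2$. This is the main obstacle, because the formula splits on whether a triple has all-odd entries (in which case the bracket is zero) or not. The plan is first to show that both sides of the identity vanish together. Writing $P=r+s$, $Q=t+u$ for one grouping and $P'=r+t$, $Q'=s+u$ for the other, one side contains the factor $[r,s,Q]$, which vanishes iff $r,s,Q$ are all odd, and similarly for the other three factors. A finite check over the $2^4=16$ parity patterns of $(r,s,t,u)$ shows that the left side vanishes iff the right side does. When neither side vanishes, substitute the floor formula and simplify the floors using the parity of each argument; the resulting factorial identity reduces to the case (ii) identity applied to the halved arguments. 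This is the most intricate step, but it is finite and mechanical.

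Finally I would dispose of case (iv), where $\beta=0$, ${\rm Char}(\K)=2$, and $d=3$. Here the constraint $r+s+t+u\leq 3$ leaves only a small finite list of quadruples $(r,s,t,u)$ of nonnegative integers, and the only way any bracket equals $0$ is if all three of its entries equal $1$. A direct enumeration shows that in every such quadruple, either all four brackets equal $1$, or the brackets appearing on the left match those appearing on the right (both containing a zero factor, or both equaling $1$). This completes the proof. The key insight throughout is that the identity has the shape of a Macdonald/$q$-binomial symmetry, and in each case the cancellation simply rearranges the six pairwise sums $r+s, r+t, r+u, s+t, s+u, t+u$ between the two sides.
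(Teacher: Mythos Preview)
Your case-by-case verification is correct. The paper itself does not prove this lemma at all: it is simply quoted from \cite[Lemma~5.3]{DP}, so there is no ``paper's own proof'' to compare against beyond that citation. Your argument in cases (i) and (ii) is exactly the standard cancellation (both sides reduce to the symmetric expression with numerator $\prod (q;q)_{r+s+t+u - x}$ over $x\in\{r,s,t,u\}$ and denominator $(q;q)_r(q;q)_s(q;q)_t(q;q)_u(q;q)_{r+s+t+u}^2$), and your treatment of case (iv) by direct enumeration is fine.

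One small caution in case (iii): the phrase ``reduces to the case (ii) identity applied to the halved arguments'' is a little loose. After writing $r=2r'+\epsilon_r$, etc., the floors $\lfloor (r+s)/2\rfloor$, $\lfloor (r+s+t)/2\rfloor$, $\ldots$ pick up $+1$ shifts that depend on the parity pattern, so what you actually get on each side is a product of four factorials of the form $(r'+s'+t'+\delta)!$, $(r'+s'+u'+\delta')!$, etc., over a common denominator $r'!s'!t'!u'!\bigl(\lfloor (r+s+t+u)/2\rfloor!\bigr)^2$; the point is that the \emph{same} four shifted triple sums appear in the numerator on each side, just permuted. This is not literally the case (ii) identity in the variables $r',s',t',u'$, but it is indeed a finite mechanical check across the twelve nonvanishing parity classes, and it goes through. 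With that clarification your proof stands.
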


\medskip


The following result is a modification of {\rm \cite[Lemma 12.4]{switch}}.

\medskip
\begin{lemma} \label{lemma:tau-to-eta}
Let $0\leq i\leq d/2$ and $i\leq j\leq d-i$.  
Both 
\begin{align}
	\tau_{ij}&=\sum_{h=0}^{j-i} \ [h,j-i-h, d-i-j]\tau_{i,i+h}(\theta_{d-i})\eta_{i,j-h},\label{eq:tau-to-eta}\\
	\eta_{ij}&=\sum_{h=0}^{j-i} \ [h,j-i-h, d-i-j]\eta_{i,i+h}(\theta_{i})\tau_{i,j-h}.\label{eq:eta-to-tau}
\end{align}
\end{lemma}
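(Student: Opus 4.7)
The plan is to establish (\ref{eq:tau-to-eta}) first, and then deduce (\ref{eq:eta-to-tau}) by applying the $\Downarrow$-operation. For (\ref{eq:tau-to-eta}), note that both sides are polynomials in $x$ of degree at most $j-i$. As $h$ runs from $0$ to $j-i$, the polynomials $\eta_{i,j-h}$ have distinct degrees $j-i, j-i-1, \ldots, 0$, so $\{\eta_{i,j-h}\}_{h=0}^{j-i}$ is a basis of the space of polynomials in $\K[x]$ of degree at most $j-i$. Thus (\ref{eq:tau-to-eta}) amounts to identifying the unique basis-expansion coefficients of $\tau_{ij}$ with the scalars $[h,j-i-h,d-i-j]\tau_{i,i+h}(\theta_{d-i})$.

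Since the result is labeled as a modification of \cite[Lemma 12.4]{switch}, my plan is to reduce (\ref{eq:tau-to-eta}) to the $i=0$ form of that lemma by a shift of indices. Set $d'=d-2i$ and $\theta'_k = \theta_{i+k}$ for $0\leq k\leq d'$, and let $\tau'_{rs}$, $\eta'_{rs}$ be the polynomials from (\ref{tau})--(\ref{eta}) built using $\{\theta'_k\}$ and $d'$. A direct unwinding of the definitions gives $\tau_{i,j}(x)=\tau'_{0,j-i}(x)$ and $\eta_{i,j-h}(x)=\eta'_{0,j-i-h}(x)$. By Theorem \ref{thm:recurrence}, the base $\beta$ is a local invariant of four consecutive eigenvalues, so the sub-sequence has the same base as $\Phi$. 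Since Definition \ref{def:triplebracket} shows $[r,s,t]$ depends only on $\beta$ (and on $r,s,t$), the scalars $[h,\,j-i-h,\,d'-(j-i)] = [h,\,j-i-h,\,d-i-j]$ computed from the sub-sequence coincide with those computed from $\Phi$. Applying \cite[Lemma 12.4]{switch} (in its $i=0$ form) to $\tau'_{0,j-i}$ and substituting back yields exactly (\ref{eq:tau-to-eta}).

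For (\ref{eq:eta-to-tau}), apply the $\Downarrow$-operation to (\ref{eq:tau-to-eta}). By the remark following (\ref{eta}) we have $\tau_{ij}^{\Downarrow} = \eta_{ij}$ and $\eta_{ij}^{\Downarrow}=\tau_{ij}$; the eigenvalues transform as $\theta_k^{\Downarrow}=\theta_{d-k}$, so in particular $\theta_{d-i}^{\Downarrow}=\theta_i$; and the base $\beta$ is unchanged by $\Downarrow$ (so the scalars $[r,s,t]$ are also unchanged). Substituting these into the $\Downarrow$-image of (\ref{eq:tau-to-eta}) gives (\ref{eq:eta-to-tau}).

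The main obstacle I anticipate is verifying the index-shift reduction in the exceptional cases of Theorem \ref{thm:q}, especially case (iv) where $\beta=0$, $\mathrm{Char}(\K)=2$, and $d=3$. Here the only relevant values are $i=0$ (which is directly \cite[Lemma 12.4]{switch}) and $i=1$, the latter forcing $j=1$ so that both sides of (\ref{eq:tau-to-eta}) reduce to $1$. Cases (iii) and (iv) of Theorem \ref{thm:q} also require some attention to the parity-dependent closed forms for $\theta_k$, but Definition \ref{def:triplebracket} shows that $[r,s,t]$ makes no reference to $d$, so the restriction argument goes through uniformly.
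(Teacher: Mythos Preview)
Your approach is essentially the paper's own proof: the paper simply says ``Apply \cite[Lemma 12.4]{switch} to the sequence $\{\theta_k\}_{k=i}^{d-i}$,'' which is exactly your index-shift reduction with $d'=d-2i$ and $\theta'_k=\theta_{i+k}$. Your additional care in checking that the base $\beta$ is preserved under restriction, that $[r,s,t]$ is independent of $d$, and that the exceptional cases cause no trouble, as well as deducing (\ref{eq:eta-to-tau}) from (\ref{eq:tau-to-eta}) via $\Downarrow$, are all valid elaborations of that one-line argument.
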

\begin{proof}
Apply \cite[Lemma 12.4]{switch} to the sequence $\{ \theta_k\}_{k=i}^{d-i}$.
\end{proof}\\

Later in the paper, we will be doing some computations involving the coefficients in (\ref{eq:tau-to-eta}) and (\ref{eq:eta-to-tau}).  The following results will aid in these computations.  

\medskip
\begin{cor}\label{cor:rstvartheta}
For $0\leq i\leq d/2$ and $i+1\leq j\leq d-i$,
\begin{equation*}
\left(\theta_0-\theta_d\right)\left(\vartheta_j-\vartheta_i\right)=\left(\theta_{i}-\theta_{d-i}\right)[1,j-i-1,d-i-j].
\end{equation*}
\end{cor}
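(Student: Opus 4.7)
The plan is to extract this identity by comparing the coefficient of $x^{j-i-1}$ on the two sides of the expansion in Lemma \ref{lemma:tau-to-eta}.

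First I would observe that by (\ref{tau}) and (\ref{eta}), both $\tau_{ij}$ and $\eta_{ij}$ are monic of degree $j-i$, so $\tau_{ij}-\eta_{ij}$ has degree at most $j-i-1$. The coefficient of $x^{j-i-1}$ in $\tau_{ij}$ is $-\sum_{h=i}^{j-1}\theta_h$, while the coefficient of $x^{j-i-1}$ in $\eta_{ij}$ is $-\sum_{h=i}^{j-1}\theta_{d-h}$ (after the substitution $k=d-h$ in the index set $\{d-j+1,\dots,d-i\}$). Hence the coefficient of $x^{j-i-1}$ in $\tau_{ij}-\eta_{ij}$ is $-\sum_{h=i}^{j-1}(\theta_h-\theta_{d-h})$, and by Definition \ref{def:vartheta} this sum equals $(\theta_0-\theta_d)(\vartheta_j-\vartheta_i)$. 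So the coefficient of $x^{j-i-1}$ in $\tau_{ij}-\eta_{ij}$ is $-(\theta_0-\theta_d)(\vartheta_j-\vartheta_i)$.

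Next I would compute the same coefficient using (\ref{eq:tau-to-eta}) from Lemma \ref{lemma:tau-to-eta}. Since $[0,j-i,d-i-j]=1$ and $\tau_{ii}=1$, the $h=0$ term there is exactly $\eta_{ij}$, so
\begin{equation*}
\tau_{ij}-\eta_{ij}=\sum_{h=1}^{j-i}[h,j-i-h,d-i-j]\,\tau_{i,i+h}(\theta_{d-i})\,\eta_{i,j-h}.
\end{equation*}
For each fixed $h$ in this sum, $\tau_{i,i+h}(\theta_{d-i})$ is a scalar and $\eta_{i,j-h}$ is a monic polynomial of degree $j-h-i$. Thus the $h$-th summand has degree $j-h-i$, and only the $h=1$ term contributes to the coefficient of $x^{j-i-1}$. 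That contribution is $[1,j-i-1,d-i-j](\theta_{d-i}-\theta_i)$, since the leading coefficient of $\eta_{i,j-1}$ is $1$.

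Equating the two computations of the coefficient of $x^{j-i-1}$ and multiplying by $-1$ yields
\begin{equation*}
(\theta_0-\theta_d)(\vartheta_j-\vartheta_i)=(\theta_i-\theta_{d-i})\,[1,j-i-1,d-i-j],
\end{equation*}
as desired. There is no real obstacle here; the only subtlety is keeping track of the index shift between the two monic factorizations and the observation that all higher-$h$ summands on the right of (\ref{eq:tau-to-eta}) are too low in degree to affect the coefficient in question.
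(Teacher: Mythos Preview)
Your proof is correct and follows essentially the same approach as the paper: both extract the coefficient of $x^{j-i-1}$ from the identity (\ref{eq:tau-to-eta}) in Lemma~\ref{lemma:tau-to-eta}, noting that only the $h=0$ and $h=1$ summands on the right contribute. The only cosmetic difference is that you first move the $h=0$ term $\eta_{ij}$ to the left and work with $\tau_{ij}-\eta_{ij}$, whereas the paper compares the coefficient on each side of (\ref{eq:tau-to-eta}) as written and subtracts at the end; the arithmetic is identical.
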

\begin{proof}
Let $C$ denote the coefficient of $x^{j-i-1}$ on either side of (\ref{eq:tau-to-eta}).  From the left-hand side of (\ref{eq:tau-to-eta}), we see 
\begin{equation}
C=-\sum_{h=i}^{j-1}\theta_{h}.\label{eq:rstvartheta-1}
\end{equation}  
From the right-hand side of (\ref{eq:tau-to-eta}), we see 
\begin{equation}
C=\left(\theta_{d-i}-\theta_i\right)[j-i-1,1,d-i-j]-\sum_{h=i}^{j-1}\theta_{d-h}.\label{eq:rstvartheta-2}
\end{equation}  
Subtract (\ref{eq:rstvartheta-1}) from (\ref{eq:rstvartheta-2}) and invoke the symmetry of $[r,s,t]$ as well as Definition \ref{def:vartheta} to get the result.
\end{proof}

\medskip

\begin{lemma} \label{lemma:varthetarst}
For $0\leq i\leq d/2$ and $i+1\leq j\leq d-i$ and $0\leq h\leq j-i-1$, 
\begin{equation}\label{eq:varthetarst2}
\begin{split}
\left(\vartheta_j-\vartheta_i\right)&[h,j-i-h-1,d-i-j+1]\\
&= \left(\vartheta_{j-h}-\vartheta_i\right)[h,j-i-h,d-i-j]
\end{split}
\end{equation}
and
\begin{equation}\label{eq:varthetarst1}
\begin{split}
\left(\vartheta_j-\vartheta_i\right)&[h,j-i-h-1,d-i-j+1]\\
&= \left(\vartheta_{i+h+1}-\vartheta_i\right)[h+1,j-i-h-1,d-i-j].
\end{split}
\end{equation}
\end{lemma}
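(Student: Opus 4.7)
The plan is to convert both identities to pure bracket identities via Corollary \ref{cor:rstvartheta}, and then derive each resulting bracket identity from Lemma \ref{lemma:rstu}, the Pl\"ucker-like relation among the $[\cdot,\cdot,\cdot]$ symbols.

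I would start with (\ref{eq:varthetarst1}), since it requires only one application of Lemma \ref{lemma:rstu}. Apply Corollary \ref{cor:rstvartheta} to the index pairs $(i,j)$ and $(i,i+h+1)$: both pairs are admissible under our hypotheses, since $j\geq i+1$ is given and $i+h+1\leq d-i$ follows from $h\leq j-i-1\leq d-2i-1$. The common factor $(\theta_i-\theta_{d-i})/(\theta_0-\theta_d)$ is nonzero, because $i=d/2$ would make the range of $j$ empty and so we may assume $i<d/2$. Cancelling it reduces the claim to
$$[1,j-i-1,d-i-j][h,j-i-h-1,d-i-j+1] = [1,h,d-2i-h-1][h+1,j-i-h-1,d-i-j].$$
I would derive this from a single application of Lemma \ref{lemma:rstu} with $(r,s,t,u)=(h,1,j-i-h-1,d-i-j)$, whose argument sum $d-2i\leq d$ satisfies the hypothesis; the four brackets produced match the required positions after invoking the full $S_3$-symmetry of $[\cdot,\cdot,\cdot]$ in its three arguments.

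For (\ref{eq:varthetarst2}) the strategy is similar. Apply Corollary \ref{cor:rstvartheta} to the pairs $(i,j)$ and $(i,j-h)$ (admissible because $i+1\leq j-h\leq d-i$) and cancel the common factor, reducing the claim to a second pure bracket identity. This one follows from Lemma \ref{lemma:rstu} with $(r,s,t,u)=(1,h,j-i-h-1,d-i-j)$; after symmetrizing, it says precisely that the right-hand sides of (\ref{eq:varthetarst2}) and (\ref{eq:varthetarst1}), rewritten in bracket form, are equal. Since the left-hand sides of (\ref{eq:varthetarst2}) and (\ref{eq:varthetarst1}) are literally the same expression, both identities fall into place together by transitivity.

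The main obstacle is bookkeeping: one must choose the right parameters $(r,s,t,u)$ in each application of Lemma \ref{lemma:rstu} so that, after exploiting the symmetry of $[\cdot,\cdot,\cdot]$, the four brackets produced align with the four required ones; this is where every initial attempt tends to produce mismatched slots. Once the assignments above are in hand, everything else is elementary. No case analysis on the base $\beta$ is needed, since Corollary \ref{cor:rstvartheta} and Lemma \ref{lemma:rstu} already absorb the four subcases of Theorem \ref{thm:q}, and the edge case $i=d/2$ simply gives an empty range of $j$.
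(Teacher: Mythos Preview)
Your proposal is correct and follows essentially the same approach as the paper: both proofs combine Corollary~\ref{cor:rstvartheta} (to trade $\vartheta$-differences for brackets $[1,\cdot,\cdot]$) with a single application of Lemma~\ref{lemma:rstu} per identity, using the $S_3$-symmetry of $[\cdot,\cdot,\cdot]$ to align slots. The only cosmetic difference is that the paper treats (\ref{eq:varthetarst2}) and (\ref{eq:varthetarst1}) each directly (its parameters for (\ref{eq:varthetarst2}) are $r=1$, $s=j-i-h-1$, $t=d-i-j$, $u=h$), whereas you prove (\ref{eq:varthetarst1}) directly and then obtain (\ref{eq:varthetarst2}) by showing the two right-hand sides agree; the underlying bracket computations are the same.
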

\begin{proof}
For (\ref{eq:varthetarst2}), 
use Lemma \ref{lemma:rstu} with $r=1$, $s=j-i-h-1$, $t=d-i-j$, $u=h$.  Simplify the result using Corollary \ref{cor:rstvartheta} and the fact that $[r,s,t]$ is symmetric in $r, s, t$.

Line (\ref{eq:varthetarst1}) is similarly obtained.
\end{proof}


\begin{cor} \label{cor:prod3}
With reference to Lemma \ref{lemma:vartheta}, assume we are in the situation of {\rm (i), (ii)} or {\rm (iv)}.  
For $0\leq i\leq d/2$ and $i\leq j\leq d-i$ and $0\leq h\leq j-i$,  
\begin{equation}
[h,j-i-h,d-i-j]=\prod_{k=0}^{h-1}\frac{\vartheta_{j-k}-\vartheta_i}{\vartheta_{d-i-k}-\vartheta_i}.\label{eq:prod3-n}
\end{equation}
In (\ref{eq:prod3-n}) the denominators are nonzero by Lemma \ref{lemma:vartheta=}. 
 \end{cor}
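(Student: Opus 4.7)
The plan is to proceed by induction on $h$. The base case $h=0$ is immediate: the left-hand side is $[0,j-i,d-i-j] = 1$ by the observation that $[r,s,t] = 1$ whenever one of $r,s,t$ equals zero, and the right-hand side is the empty product, which equals $1$.

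For the inductive step, suppose the formula holds for some $h$ with $0 \leq h \leq j-i-1$, and we wish to establish it for $h+1$. The key is to combine the two identities in Lemma \ref{lemma:varthetarst}. Since the common left-hand side $(\vartheta_j - \vartheta_i)[h, j-i-h-1, d-i-j+1]$ appears in both (\ref{eq:varthetarst2}) and (\ref{eq:varthetarst1}), equating their right-hand sides yields
\begin{equation*}
(\vartheta_{j-h} - \vartheta_i)[h, j-i-h, d-i-j] = (\vartheta_{i+h+1} - \vartheta_i)[h+1, j-i-h-1, d-i-j].
\end{equation*}
Now invoke the symmetry (\ref{line:varthetasym2}), which gives $\vartheta_{i+h+1} = \vartheta_{d-i-h}$. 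Rearranging and applying the inductive hypothesis then produces
\begin{equation*}
[h+1, j-i-h-1, d-i-j] = \frac{\vartheta_{j-h}-\vartheta_i}{\vartheta_{d-i-h}-\vartheta_i} \prod_{k=0}^{h-1} \frac{\vartheta_{j-k}-\vartheta_i}{\vartheta_{d-i-k}-\vartheta_i} = \prod_{k=0}^{h} \frac{\vartheta_{j-k}-\vartheta_i}{\vartheta_{d-i-k}-\vartheta_i},
\end{equation*}
which is exactly the desired identity for $h+1$.

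To justify that the division in the inductive step is legitimate, I would verify nonvanishing of the denominators using Lemma \ref{lemma:vartheta=}: for $0 \leq k \leq h-1 \leq j-i-1 \leq d-2i-1$, equality $\vartheta_{d-i-k} = \vartheta_i$ would force either $d-i-k = i$ (impossible since $k \leq d-2i-1$) or $(d-i-k)+i = d+1$ (impossible since $k \geq 0$), so the denominators are nonzero in cases (i), (ii), (iv).

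The main obstacle is spotting the right way to pair the two identities of Lemma \ref{lemma:varthetarst}, together with the realization that the symmetry $\vartheta_{i+h+1} = \vartheta_{d-i-h}$ from (\ref{line:varthetasym2}) is precisely what converts the ``natural'' denominator $\vartheta_{i+h+1} - \vartheta_i$ arising from (\ref{eq:varthetarst1}) into the claimed denominator $\vartheta_{d-i-k} - \vartheta_i$ appearing in the product. Once that is noticed, the induction is a short computation.
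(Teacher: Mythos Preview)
Your proof is correct and takes essentially the same approach as the paper: both arguments induct on $h$ using Lemma \ref{lemma:varthetarst} and then invoke the symmetry $\vartheta_{i+k+1}=\vartheta_{d-i-k}$ from (\ref{line:varthetasym2}) to convert the denominator. The only cosmetic difference is that the paper iterates (\ref{eq:varthetarst1}) alone (letting the effective $j$-parameter shift at each step) to reach $\prod_{k=0}^{h-1}(\vartheta_{j-k}-\vartheta_i)/(\vartheta_{i+k+1}-\vartheta_i)$ before applying the symmetry, whereas you first equate the right-hand sides of (\ref{eq:varthetarst2}) and (\ref{eq:varthetarst1}) to get a recursion with $j$ fixed; the resulting products are identical.
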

\begin{proof}
Assume $h\geq 1$; otherwise both sides of (\ref{eq:prod3-n}) equal $1$.
From (\ref{eq:varthetarst1}) we obtain
\begin{equation*}
[h,j-i-h,d-i-j]=\frac{\vartheta_j-\vartheta_i}{\vartheta_{i+h}-\vartheta_i}[h-1,j-i-h,d-i-j+1].
\end{equation*}
Iterating this we get 
\begin{equation*}
[h,j-i-h,d-i-j]=\prod_{k=0}^{h-1}\frac{\vartheta_{j-k}-\vartheta_i}{\vartheta_{i+k+1}-\vartheta_i}.\end{equation*}
Evaluating the denominator using Lemma \ref{lemma:vartheta=} we obtain the result.
\end{proof}

\section{The maps $\Delta, \Psi$ commute}\label{section:commute}
We continue to discuss the TD system $\Phi$ from Definition \ref{def:TDsys}.  
In Section \ref{section:Delta}, we introduced the linear transformation $\Delta$ and discussed some of its properties.  
In Section \ref{section:psi}, we introduced the linear transformation $\Psi$ and discussed some of its properties.  
We now discuss how $\Delta,\Psi$ relate to each other.
Along this line we have two main results.  They are Theorem \ref{thm:deltapsicomm} and Theorem \ref{thm:Deltapoly}.  We prove Theorem \ref{thm:deltapsicomm} in this section.  Before proving Theorem \ref{thm:Deltapoly}, it will be convenient to give the characterization of $\Psi$ 
discussed in the Introduction.  This will be done in Section \ref{section:anotherchar}.

\medskip

\begin{thm}\label{thm:deltapsicomm}
With reference to Definition \ref{def:Delta} and Lemma \ref{lemma:psidef}, the operators $\Delta$, $\Psi$ commute.
\end{thm}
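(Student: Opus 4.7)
The plan is to verify $\Delta \Psi = \Psi \Delta$ on each summand of the direct sum decomposition $V = \sum_{0 \leq i \leq r} \sum_{i \leq j \leq d-i} \tau_{ij}(A) K_i$ from Corollary \ref{cor:Urefine}. So fix $i,j$ with $0 \leq i \leq d/2$ and $i \leq j \leq d-i$, and fix $v \in K_i$. I will compute both $\Delta \Psi \tau_{ij}(A)v$ and $\Psi \Delta \tau_{ij}(A)v$ and show they coincide.

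First, using Lemma \ref{lemma:psidef} and then (\ref{eq:Delta}), one obtains
\begin{equation*}
\Delta \Psi \tau_{ij}(A) v = (\vartheta_j - \vartheta_i)\, \Delta \tau_{i,j-1}(A) v = (\vartheta_j - \vartheta_i)\, \eta_{i,j-1}(A) v.
\end{equation*}
Now expand $\eta_{i,j-1}$ in terms of $\{\tau_{i,k}\}$ using (\ref{eq:eta-to-tau}); this yields
\begin{equation*}
\Delta \Psi \tau_{ij}(A)v = \sum_{h=0}^{j-i-1}(\vartheta_j-\vartheta_i)\,[h,j-i-h-1,d-i-j+1]\, \eta_{i,i+h}(\theta_i)\, \tau_{i,j-h-1}(A) v.
\end{equation*}

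For the other direction, apply (\ref{eq:Delta}) first to get $\Delta \tau_{ij}(A)v = \eta_{ij}(A) v$, and then use (\ref{eq:eta-to-tau}) to write $\eta_{ij}(A)v$ as a linear combination of $\tau_{i,j-h}(A) v$. Applying $\Psi$ term by term via Lemma \ref{lemma:psidef} (noting that the $h=j-i$ term vanishes since $\vartheta_{j-h}-\vartheta_i=0$ there, which is also consistent with $\tau_{i,i-1}=0$) gives
\begin{equation*}
\Psi \Delta \tau_{ij}(A)v = \sum_{h=0}^{j-i-1}[h,j-i-h,d-i-j](\vartheta_{j-h}-\vartheta_i)\, \eta_{i,i+h}(\theta_i)\, \tau_{i,j-h-1}(A)v.
\end{equation*}

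Comparing the two sums term by term, the equality $\Delta \Psi \tau_{ij}(A) v = \Psi \Delta \tau_{ij}(A) v$ reduces to the identity
\begin{equation*}
(\vartheta_j-\vartheta_i)[h,j-i-h-1,d-i-j+1] = (\vartheta_{j-h}-\vartheta_i)[h,j-i-h,d-i-j]
\end{equation*}
for each $h$ with $0 \leq h \leq j-i-1$, which is precisely (\ref{eq:varthetarst2}) of Lemma \ref{lemma:varthetarst}. Since this holds on every summand $\tau_{ij}(A)K_i$ and these span $V$, we conclude $\Delta\Psi = \Psi\Delta$. The main obstacle is purely bookkeeping: one must choose expansion (\ref{eq:eta-to-tau}) (rather than (\ref{eq:tau-to-eta})) to land back in the $\tau$-basis where $\Psi$ acts diagonally in a known way, and then recognize the resulting coefficient identity as exactly the content of Lemma \ref{lemma:varthetarst}.
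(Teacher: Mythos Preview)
Your proof is correct and follows essentially the same approach as the paper's: both verify the identity on each summand $\tau_{ij}(A)K_i$ of the decomposition from Corollary~\ref{cor:Urefine}, compute $\Delta\Psi$ and $\Psi\Delta$ via (\ref{eq:Delta}) and (\ref{eq:eta-to-tau}), and reduce the comparison to the coefficient identity (\ref{eq:varthetarst2}). The only cosmetic difference is that the paper separates out the case $i=j$ explicitly, whereas you absorb it into the general computation (both sums are empty there).
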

\begin{proof}
Recall the decomposition of $V$ given in Corollary \ref{cor:Urefine}.  We will show that $\Psi\Delta$, $\Delta\Psi$ agree on each summand $\tau_{ij}(A)K_i$.  

First assume that $i=j$.  Recall that $\tau_{ii}$ and $\eta_{ii}$ both equal $1$.  Using (\ref{eq:DeltaK_i}) and the fact that $\Psi K_i=0$, we routinely find that each of $\Psi\Delta$, $\Delta\Psi$ vanishes on $\tau_{ii}(A)K_i$.

Next assume that $i<j$.  In order to show that $\Psi\Delta$, $\Delta\Psi$ agree on $\tau_{ij}(A)K_i$, it suffices to show that $\Psi\Delta\tau_{ij}(A)$ and $\Delta\Psi\tau_{ij}(A)$ agree on $K_i$.
By (\ref{eq:eta-to-tau}), Lemma \ref{lemma:Delta1},  and Lemma \ref{lemma:psidef}, the operators $\Psi\Delta\tau_{ij}(A)$ and 
\begin{equation}
\sum_{h=0}^{j-i-1} \left(\vartheta_{j-h}-\vartheta_i\right)[h,j-i-h,d-i-j]\eta_{i,i+h}(\theta_{i})\tau_{i,j-h-1}(A)\label{eq:dpc2}
\end{equation}
agree on $K_i$.
By (\ref{eq:eta-to-tau}), Lemma \ref{lemma:Delta1} and Lemma \ref{lemma:psidef}, the operators $\Delta\Psi\tau_{ij}(A)$ and 
\begin{equation}
\left(\vartheta_j-\vartheta_i\right)\sum_{h=0}^{j-i-1}[h,j-i-h-1,d-i-j+1] \eta_{i,i+h}(\theta_i)\tau_{i,j-h-1}(A)
\label{eq:dpc1}
\end{equation}
agree on $K_i$.  In order to show (\ref{eq:dpc2}), (\ref{eq:dpc1}) agree on $K_i$, we will need the fact that
\begin{equation*}
\left(\vartheta_{j-h}-\vartheta_i\right)[h,j-i-h,d-i-j]
\label{eq:dpc4}
\end{equation*}
and
\begin{equation*}
\left(\vartheta_j-\vartheta_i\right)[h,j-i-h-1,d-i-j+1]
\label{eq:dpc3}
\end{equation*}
are equal for $0\leq h\leq j-i-1$.  This equality is (\ref{eq:varthetarst2}).
Therefore (\ref{eq:dpc2}) and (\ref{eq:dpc1}) agree on $K_i$.  
Thus $\Psi\Delta\tau_{ij}(A)$ and $\Delta\Psi\tau_{ij}(A)$ agree on $K_i$.
Hence $\Psi\Delta$, $\Delta\Psi$ agree on $\tau_{ij}(A)K_i$.
By Corollary \ref{cor:Urefine}, $\Psi\Delta$, $\Delta\Psi$ agree on $V$.
\end{proof}\\


\medskip

From Theorem \ref{thm:deltapsicomm}, we derive a number of corollaries.

\medskip

\begin{cor}\label{cor:psis}
With reference to Lemma \ref{lemma:psidef}, $\Psi^{\Downarrow}=\Psi$.
\end{cor}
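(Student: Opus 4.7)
The plan is to show that $\Psi$ itself satisfies the hypotheses of the characterization of $\Psi$ (the unlabeled lemma following Corollary \ref{cor:ADeltaPsiR}) \emph{when that lemma is applied to the inverted TD system $\Phi^{\Downarrow}$}; this will force $\Psi = \Psi^{\Downarrow}$ by uniqueness. For $\Phi^{\Downarrow}$ the characterization reads: $\Psi^{\Downarrow}$ is the unique element of ${\rm End}(V)$ with $\Psi^{\Downarrow} R^{\Downarrow} - R^{\Downarrow}\Psi^{\Downarrow} = \sum_{h=0}^{d}\frac{\theta_h-\theta_{d-h}}{\theta_0-\theta_d}F_h^{\Downarrow}$ and $\Psi^{\Downarrow} K_i^{\Downarrow}=0$ for $0\leq i\leq d/2$. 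Note that when one transports the formula through $\Phi\mapsto\Phi^{\Downarrow}$ the two sign flips from swapping $\theta_0\leftrightarrow\theta_d$ in numerator and denominator cancel, so the scalar coefficients are unchanged.

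The vanishing condition is immediate: the second inversion is involutive, so by Lemma \ref{lemma:K} we have $K_i^{\Downarrow}=U_i^{\Downarrow}\cap U_i=K_i$, and $\Psi K_i=0$ by construction (Lemma \ref{lemma:psidef}). For the commutator condition, I would conjugate $R$ into $R^{\Downarrow}$ using $\Delta$. Lemma \ref{lemma:RDelta} gives $R^{\Downarrow}=\Delta R\Delta^{-1}$; combining this with $\Psi\Delta=\Delta\Psi$ (Theorem \ref{thm:deltapsicomm}) yields
\[
\Psi R^{\Downarrow}-R^{\Downarrow}\Psi \;=\; \Delta(\Psi R-R\Psi)\Delta^{-1}.
\]
Now substitute the value of $\Psi R-R\Psi$ from Lemma \ref{lemma:RPsi}; since conjugation by $\Delta$ is linear, this reduces the problem to identifying $\Delta F_h\Delta^{-1}$. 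But Lemma \ref{lemma:FDelta} gives $F_h^{\Downarrow}\Delta=\Delta F_h$, i.e.\ $\Delta F_h\Delta^{-1}=F_h^{\Downarrow}$, which produces the required right-hand side.

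The main obstacle is simply securing the commutation $\Psi\Delta=\Delta\Psi$, which is exactly Theorem \ref{thm:deltapsicomm}; the conjugation argument collapses without it, and this is precisely why the corollary is placed here rather than earlier. Once that commutativity is granted, the rest is a routine conjugation computation combining Lemmas \ref{lemma:RDelta}, \ref{lemma:FDelta}, \ref{lemma:RPsi}, and the involutivity of $\Downarrow$.
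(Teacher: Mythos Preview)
Your argument is correct: you verify that $\Psi$ satisfies, relative to $\Phi^{\Downarrow}$, the two hypotheses of the uniqueness characterization following Corollary~\ref{cor:ADeltaPsiR}, and the identification $K_i^{\Downarrow}=K_i$ together with the conjugation $\Delta(\cdot)\Delta^{-1}$ carrying $R\mapsto R^{\Downarrow}$ and $F_h\mapsto F_h^{\Downarrow}$ does the job cleanly. The reliance on Theorem~\ref{thm:deltapsicomm} is genuine and unavoidable in your route, exactly as you note.

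This is, however, a different route from the paper's. The paper does not invoke the uniqueness characterization at all. Instead it verifies directly, on each summand $\tau_{ij}(A)K_i$ of the refined decomposition (Corollary~\ref{cor:Urefine}), that $\Psi^{\Downarrow}\Delta=\Delta\Psi$: for $v\in K_i$, equation~(\ref{eq:Delta}) gives $\Delta\tau_{ij}(A)v=\eta_{ij}(A)v$, and then (\ref{eq:psi-action}) applied to $\Phi$ and to $\Phi^{\Downarrow}$ (using $\vartheta_k^{\Downarrow}=\vartheta_k$) shows both sides equal $(\vartheta_j-\vartheta_i)\eta_{i,j-1}(A)v$. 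Once $\Psi^{\Downarrow}\Delta=\Delta\Psi$ is in hand, Theorem~\ref{thm:deltapsicomm} gives $\Psi^{\Downarrow}\Delta=\Psi\Delta$ and invertibility of $\Delta$ finishes. So the paper trades your conjugation-and-characterization step for a short explicit check using only the defining actions (\ref{eq:Delta}) and (\ref{eq:psi-action}); your approach buys a more structural explanation of \emph{why} the equality holds (it is forced by the Weyl-type relation and the vanishing on $K_i$), at the cost of routing through Lemmas~\ref{lemma:RDelta}, \ref{lemma:FDelta}, \ref{lemma:RPsi} and the characterization lemma. Both arguments rest equally on Theorem~\ref{thm:deltapsicomm}.
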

\begin{proof} We first show that $\Psi^{\Downarrow}\Delta=\Delta\Psi$.  Recall the decomposition of $V$ given in Corollary \ref{cor:Urefine}.  We will show that $\Psi^{\Downarrow}\Delta$, $\Delta\Psi$ agree on each summand $\tau_{ij}(A)K_i$.  
By (\ref{eq:Delta}) and (\ref{eq:psi-action}) 
(applied to both $\Phi$ and $\Phi^{\Downarrow}$), $\Psi^{\Downarrow}\Delta\tau_{ij}(A)$ and $\Delta\Psi\tau_{ij}(A)$
agree on $K_i$.  Hence $\Psi^{\Downarrow}\Delta$, $\Delta\Psi$
agree on $\tau_{ij}(A)K_i$.  By Corollary \ref{cor:Urefine}, $\Psi^{\Downarrow}\Delta$, $\Delta\Psi$ agree on $V$.  Thus $\Psi^{\Downarrow}\Delta=\Delta\Psi$.
Combine this fact with Theorem \ref{thm:deltapsicomm} and the fact that $\Delta$ is invertible to get the result.
\end{proof}


\begin{cor}\label{cor:psiUdd}
With reference to Lemma \ref{lemma:psidef}, we have
\begin{equation*}
\Psi U_i^{\Downarrow}  \subseteq U_{i-1}^{\Downarrow} \qquad\qquad (1\leq i\leq d), \qquad \Psi U_0^{\Downarrow} =0.
\end{equation*}
\end{cor}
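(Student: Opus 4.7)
The plan is to apply Lemma \ref{lemma:psiU} to the second inversion $\Phi^{\Downarrow}$ and then invoke Corollary \ref{cor:psis} to translate the result back to $\Psi$. Specifically, Lemma \ref{lemma:psiU} applied to $\Phi^{\Downarrow}$ yields
\begin{equation*}
\Psi^{\Downarrow} U_i^{\Downarrow} \subseteq U_{i-1}^{\Downarrow} \qquad (1 \leq i \leq d), \qquad \Psi^{\Downarrow} U_0^{\Downarrow} = 0,
\end{equation*}
since $(U_i)^{\Downarrow} = U_i^{\Downarrow}$ by the notational convention for the second inversion. By Corollary \ref{cor:psis}, $\Psi^{\Downarrow} = \Psi$, and substituting this equality into the containments above gives the desired conclusion.

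There is no real obstacle here: once Corollary \ref{cor:psis} is in hand, the statement is immediate by applying a previously established result to $\Phi^{\Downarrow}$. The only thing to verify is that the hypotheses of Lemma \ref{lemma:psiU} transfer to $\Phi^{\Downarrow}$, which is automatic because $\Phi^{\Downarrow}$ is itself a TD system on $V$ (as noted after Definition \ref{def:main}), so every result proved for a generic TD system applies to it.
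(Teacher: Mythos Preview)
Your proof is correct and matches the paper's own argument: the paper also simply combines Corollary~\ref{cor:psis} with Lemma~\ref{lemma:psiU} applied to $\Phi^{\Downarrow}$.
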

\begin{proof}
Combine Corollary \ref{cor:psis} with Lemma \ref{lemma:psiU}.
\end{proof}


\begin{cor}\label{cor:PsiE_iV}  
With reference to Lemma \ref{lemma:psidef}, we have
\begin{align*}
\Psi E_i V  &\subseteq E_{i-1}V+E_iV+E_{i+1}V\quad\qquad (0\leq i\leq d).
\end{align*}
\end{cor}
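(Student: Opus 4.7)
The plan is to sandwich $\Psi E_i V$ inside a suitable intersection by applying $\Psi$ to two natural expressions for $E_i V$, one coming from each split decomposition.

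First I observe, via Theorem~\ref{thm:U}(iii) applied to $\Phi$ and then to $\Phi^\Downarrow$, that
$$E_i V \subseteq U_i + U_{i+1} + \cdots + U_d, \qquad E_i V \subseteq U_{d-i}^\Downarrow + U_{d-i+1}^\Downarrow + \cdots + U_d^\Downarrow.$$
Applying $\Psi$ to the first inclusion and using Lemma~\ref{lemma:psiU} (with the convention $U_{-1}=0$), the image is contained in $U_{i-1} + U_i + \cdots + U_{d-1}$, which by Theorem~\ref{thm:U}(iii) equals $E_{i-1}V + E_iV + \cdots + E_dV$ (with $E_{-1}V=0$). Applying $\Psi$ to the second inclusion and using Corollary~\ref{cor:psiUdd} (with $U_{-1}^\Downarrow=0$), the image is contained in $U_{d-i-1}^\Downarrow + \cdots + U_{d-1}^\Downarrow$, which by the $\Phi^\Downarrow$-version of Theorem~\ref{thm:U}(iii) recorded at the start of Section~\ref{section:Udd} equals $E_0V + E_1V + \cdots + E_{i+1}V$ (with $E_{d+1}V=0$).

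Combining these two containments, $\Psi E_i V$ lies in
$$(E_{i-1}V + E_iV + \cdots + E_dV) \cap (E_0V + E_1V + \cdots + E_{i+1}V).$$
Since $V = \sum_{j=0}^d E_jV$ is a direct sum, this intersection is exactly $E_{i-1}V + E_iV + E_{i+1}V$, which is the desired conclusion. The argument is just bookkeeping between the two split decompositions and the directness of the eigenspace decomposition of $A$, so I do not anticipate any real obstacle.
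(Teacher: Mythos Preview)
Your argument is correct and follows essentially the same route as the paper's proof: sandwich $\Psi E_iV$ between the two split decompositions via Lemma~\ref{lemma:psiU} and Corollary~\ref{cor:psiUdd}, then intersect. One small wording fix: $U_{i-1}+\cdots+U_{d-1}$ is not literally \emph{equal} to $E_{i-1}V+\cdots+E_dV$ by Theorem~\ref{thm:U}(iii) (you would need the summand $U_d$ for equality), but it is certainly \emph{contained} in it, and that containment is all you use; the same remark applies to the $\Phi^\Downarrow$ side.
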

\begin{proof}
Let $i$ be given.  On the one hand, by Theorem \ref{thm:U}(iii) and Lemma \ref{lemma:psiU}, we have 
\begin{align}
	\Psi E_iV	&\subseteq \Psi (E_iV+E_{i+1}V+\cdots + E_dV) \notag\\
			&= \Psi (U_i+U_{i+1}+\cdots + U_d)\notag\\
			&\subseteq  U_{i-1}+U_{i}+\cdots + U_{d}\notag\\
			&= E_{i-1}V+E_{i+1}V+\cdots + E_dV. \label{eq:PsiEiV1}
\end{align}
On the other hand, by Theorem \ref{thm:U}(iii) applied to $\Phi^{\Downarrow}$ and Corollary \ref{cor:psiUdd}, we have 
\begin{align}
	\Psi E_iV	&\subseteq \Psi (E_0V+E_{1}V+\cdots + E_{i}V)\notag\\
			&= \Psi (U_{d-i}^{\Downarrow}+U_{d-i+1}^{\Downarrow}+\cdots + U_d^{\Downarrow})\notag\\
			&\subseteq  U_{d-i-1}^{\Downarrow}+U_{d-i}^{\Downarrow}+\cdots + U_{d}^{\Downarrow}\notag\\
			&= E_{0}V+E_{1}V+\cdots + E_{i+1}V. \label{eq:PsiEiV2}
\end{align}
Observe that $\Psi E_iV$ is in the intersection of (\ref{eq:PsiEiV1}) and (\ref{eq:PsiEiV2}).  This intersection equals 
$E_{i-1}V+E_i V+E_{i+1}V$, and the result follows.
\end{proof}

\section{A characterization of $\Psi$}\label{section:anotherchar}

We continue to discuss the TD system $\Phi$ from Definition \ref{def:TDsys}.
Our goal in this section is to obtain the characterization of $\Psi$ given in the Introduction.

\begin{lemma}
With reference to Lemma \ref{lemma:psidef}, we have
\begin{align*}
\Psi E_i^*V &\subseteq E_0^*V+E_1^*V+\cdots +E_{i-1}^*V\qquad\qquad   (0\leq i\leq d).\label{eq:PsiEi*}
\end{align*}
\end{lemma}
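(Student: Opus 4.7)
The plan is to deduce this statement directly from two results already in hand: Theorem \ref{thm:U}(iii), which identifies the ascending filtration by the $U_j$ with the ascending filtration by the $E_j^*V$, and Lemma \ref{lemma:psiU}, which says that $\Psi$ shifts the $U_j$-filtration down by one. Since each $E_i^*V$ sits inside the appropriate initial piece of the first split decomposition, applying $\Psi$ will land it in a strictly smaller initial piece, which then translates back to the desired sum of $E_j^*V$.

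More concretely, first I would observe that
\begin{equation*}
E_i^*V \subseteq E_0^*V + E_1^*V + \cdots + E_i^*V = U_0 + U_1 + \cdots + U_i,
\end{equation*}
where the equality is Theorem \ref{thm:U}(iii). Next I would apply $\Psi$ term by term, using Lemma \ref{lemma:psiU}: for $1 \leq j \leq i$ we have $\Psi U_j \subseteq U_{j-1}$, while $\Psi U_0 = 0$. Hence
\begin{equation*}
\Psi E_i^*V \subseteq \Psi U_0 + \Psi U_1 + \cdots + \Psi U_i \subseteq U_0 + U_1 + \cdots + U_{i-1}.
\end{equation*}
Finally, I would invoke Theorem \ref{thm:U}(iii) once more in the reverse direction to conclude that $U_0 + U_1 + \cdots + U_{i-1} = E_0^*V + E_1^*V + \cdots + E_{i-1}^*V$, which is exactly the asserted containment. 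The boundary case $i=0$ is automatic since $E_0^*V = U_0$ and $\Psi U_0 = 0$, matching the convention that the empty sum is zero.

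There is no genuine obstacle here; the statement is essentially a translation of Lemma \ref{lemma:psiU} across the dictionary provided by Theorem \ref{thm:U}(iii). The only thing to be careful about is bookkeeping at the endpoints of the filtration (in particular the vanishing $\Psi U_0 = 0$, which is what makes the final index come out to $i-1$ rather than $i$).
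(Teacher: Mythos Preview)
Your proof is correct and follows essentially the same approach as the paper: both use Theorem \ref{thm:U}(iii) to pass from the $E_j^*V$ filtration to the $U_j$ filtration, apply Lemma \ref{lemma:psiU} to lower the filtration level, and then translate back via Theorem \ref{thm:U}(iii).
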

\begin{proof}
Using Theorem \ref{thm:U}(iii) and Lemma \ref{lemma:psiU}, we obtain
\begin{align*}
\Psi E_i^*V &\subseteq \Psi \left( E_0^*V+E_1^*V +\cdots +E_i^*V\right)\\
&= \Psi \left( U_0+U_1+\cdots + U_i \right)\\
&\subseteq U_0+U_1+\cdots + U_{i-1}\\
&=E_0^*V+E_1^*V +\cdots +E_{i-1}^*V.
\end{align*}
\end{proof}

\begin{lemma}\label{lemma:DeltaIPsiEj*}
With reference to Definition \ref{def:Delta} and Lemma \ref{lemma:psidef}, 
for $0\leq j\leq d$ apply either of 
\begin{equation*}
\Delta-I-(\theta_0-\theta_d)\Psi, \qquad\qquad \Delta^{-1}-I+(\theta_0-\theta_d)\Psi
\end{equation*}
to $E_j^*V$ and consider the image.  This image is contained in $E_0^*V+E_1^*V+\cdots + E_{j-2}^*V$ if $j\geq 2$ and equals $0$ if $j<2$.
\end{lemma}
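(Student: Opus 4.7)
The plan is to reduce Lemma \ref{lemma:DeltaIPsiEj*} to Lemma \ref{lemma:DeltaIPsi} by using Theorem \ref{thm:U}(iii), which provides an identification between the partial sums of the $E_i^* V$ and the partial sums of the $U_i$. This is a natural reformulation, since the statement is the $E^*V$-analogue of the $U$-statement already proved.

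First, I would observe that by Theorem \ref{thm:U}(iii),
\begin{equation*}
E_j^*V \subseteq E_0^*V + E_1^*V + \cdots + E_j^*V = U_0 + U_1 + \cdots + U_j.
\end{equation*}
Therefore, applying the operator $T := \Delta - I - (\theta_0 - \theta_d)\Psi$ to $E_j^*V$, we get
\begin{equation*}
T E_j^*V \subseteq \sum_{i=0}^{j} T U_i.
\end{equation*}
Now, by Lemma \ref{lemma:DeltaIPsi}, for each $i$ with $0 \leq i \leq j$, the image $T U_i$ is contained in $U_0 + U_1 + \cdots + U_{i-2}$ (interpreted as $0$ if $i<2$). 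Summing over $i$, the image $T E_j^*V$ lies inside $U_0 + U_1 + \cdots + U_{j-2}$ if $j \geq 2$, and equals $0$ if $j < 2$.

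Finally, I would invoke Theorem \ref{thm:U}(iii) one more time to rewrite
\begin{equation*}
U_0 + U_1 + \cdots + U_{j-2} = E_0^*V + E_1^*V + \cdots + E_{j-2}^*V,
\end{equation*}
which gives the claimed containment. The identical argument, replacing Lemma \ref{lemma:DeltaIPsi}'s statement about $T$ with its statement about $\Delta^{-1} - I + (\theta_0 - \theta_d)\Psi$, handles the second operator. There is no genuine obstacle here; the only thing to be careful about is the boundary case $j<2$, where both Lemma \ref{lemma:DeltaIPsi} and the current statement assert the image is $0$, so that case is immediate.
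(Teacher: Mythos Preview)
Your proof is correct and follows exactly the same approach as the paper, which simply says ``Use Theorem \ref{thm:U}(iii) and Lemma \ref{lemma:DeltaIPsi}.'' Your write-up just spells out the details of that one-line proof.
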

\begin{proof}
Use Theorem \ref{thm:U}(iii) and Lemma \ref{lemma:DeltaIPsi}.
\end{proof}\\

By Corollary \ref{cor:PsiE_iV} and Lemma \ref{lemma:DeltaIPsiEj*}, both
\begin{align*}
\Psi E_iV&\subseteq E_{i-1}V+E_iV+E_{i+1}V,\\
\left(\Psi-\frac{\Delta-I}{\theta_0-\theta_d}\right)E_i^*V&\subseteq E_0^*V+E_1^*V+\cdots + E_{i-2}^*V
\end{align*}
for $0\leq i\leq d$.
We show that these two properties characterize $\Psi$.

\begin{lemma}
Given $\Psi'\in {\rm End}(V)$ such that both
\begin{align*}
\Psi' E_iV&\subseteq E_{i-1}V+E_iV+E_{i+1}V,\\
\left(\Psi'-\frac{\Delta-I}{\theta_0-\theta_d}\right)E_i^*V&\subseteq E_0^*V+E_1^*V+\cdots + E_{i-2}^*V 
\end{align*}
for $0\leq i\leq d$.  Then $\Psi'=\Psi$.
\end{lemma}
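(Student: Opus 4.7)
The plan is to set $\Xi = \Psi - \Psi'$ and show that $\Xi = 0$. Since $\Psi$ itself satisfies the two listed containments (by Corollary \ref{cor:PsiE_iV} and Lemma \ref{lemma:DeltaIPsiEj*}) and these containments are closed under subtraction (the $\Delta$-term cancels in the second one), we obtain for $0 \leq i \leq d$:
\begin{align*}
\Xi E_i V &\subseteq E_{i-1}V + E_i V + E_{i+1}V,\\
\Xi E_i^* V &\subseteq E_0^* V + E_1^* V + \cdots + E_{i-2}^* V.
\end{align*}

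The strategy is to convert these statements about the eigenspaces of $A$ and $A^*$ into statements about the first split decomposition $\{U_i\}_{i=0}^d$, and then to pin down $\Xi$ on each $U_i$ by intersecting two containments inside the direct sum $V = \sum U_i$. Summing the second containment over $0,1,\ldots,i$ and applying Theorem \ref{thm:U}(iii) gives $\Xi(U_0 + U_1 + \cdots + U_i) \subseteq U_0 + U_1 + \cdots + U_{i-2}$, where the right-hand side is $0$ when $i \leq 1$. Similarly, summing the first containment over $j, j+1, \ldots, d$ and using Theorem \ref{thm:U}(iii) yields $\Xi(U_j + U_{j+1} + \cdots + U_d) \subseteq U_{j-1} + U_j + \cdots + U_d$.

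Now fix $i$ and let $v \in U_i$. The first derived inclusion gives $\Xi v \in U_0 + U_1 + \cdots + U_{i-2}$, while the second (with $j = i$) gives $\Xi v \in U_{i-1} + U_i + \cdots + U_d$. Since $\{U_\ell\}_{\ell=0}^d$ is a decomposition of $V$ by Theorem \ref{thm:U}, these two sums intersect in $0$, so $\Xi v = 0$. Hence $\Xi U_i = 0$ for $0 \leq i \leq d$, and therefore $\Xi = 0$ on $V$, giving $\Psi' = \Psi$.

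I do not expect a real obstacle here; the argument is essentially the same template as Lemma \ref{lemma:DD'}, just applied to the eigenspace containments rather than directly to the split decomposition. The only mild care needed is the bookkeeping at the boundary $i \in \{0,1\}$, where the right-hand side of the second containment is empty and must be interpreted as $0$; this is handled uniformly by the convention that sums indexed by an empty range equal $0$.
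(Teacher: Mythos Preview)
Your proof is correct and follows essentially the same route as the paper: both set $\Xi=\Psi-\Psi'$, use Corollary~\ref{cor:PsiE_iV} and Lemma~\ref{lemma:DeltaIPsiEj*} to see that $\Xi$ satisfies the two containments, translate these via Theorem~\ref{thm:U}(iii) into $\Xi U_i\subseteq U_0+\cdots+U_{i-2}$ and $\Xi U_i\subseteq U_{i-1}+\cdots+U_d$, and conclude from the directness of $\{U_i\}_{i=0}^d$ that $\Xi U_i=0$. The only cosmetic difference is that the paper writes out the cancellation of the $(\Delta-I)/(\theta_0-\theta_d)$ term explicitly as an identity before applying the containments, whereas you simply note that the $\Delta$-term cancels upon subtraction.
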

\begin{proof}
Recall from Theorem \ref{thm:U} that $\{U_i\}_{i=0}^d$ is a decomposition of $V$.  
It suffices to show that $\Psi,\Psi'$ agree on $U_i$ for $0\leq i\leq d$.  Let $i$ be given.  Observe that 
\begin{equation}
\Psi-\Psi'= \Psi-\frac{\Delta-I}{\theta_0-\theta_d}-\Psi'+\frac{\Delta - I}{\theta_0-\theta_d}.  \label{eq:Psi-Psi'-1}
\end{equation}
 Using (\ref{eq:Psi-Psi'-1}) along with 
Theorem \ref{thm:U}(iii) and Lemma \ref{lemma:DeltaIPsiEj*}, we obtain
\begin{align*}
(\Psi-\Psi')U_i &\subseteq (\Psi-\Psi')(U_0+U_1+\cdots +U_i)\\
&= (\Psi-\Psi')(E_0^*V+E_1^*V+\cdots +E_i^*V)\\
&\subseteq E_0^*V+E_1^*V+\cdots +E_{i-2}^*V\\
&=U_{0}+U_1+\cdots +U_{i-2}.
\end{align*}

By Theorem \ref{thm:U}(iii) and Corollary \ref{cor:PsiE_iV},
\begin{align*}
(\Psi-\Psi')U_i &\subseteq (\Psi-\Psi')(U_i+U_{i+1}+\cdots +U_d)\\
&= (\Psi-\Psi')(E_iV+E_{i+1}V+\cdots +E_dV)\\
&\subseteq E_{i-1}V+E_{i}V+\cdots +E_{d}V\\
&=U_{i-1}+U_i+\cdots +U_d.
\end{align*}

Thus $(\Psi-\Psi')U_i$ is contained in the intersection of $U_{0}+U_1+\cdots +U_{i-2}$ and 
$U_{i-1}+U_i+\cdots +U_d$.  This intersection is zero since $\{U_i\}_{i=0}^d$ is a decomposition of $V$.  So $\Psi-\Psi'$ vanishes on $U_i$.  
Therefore $\Psi,\Psi'$ agree on $U_i$.
\end{proof}

\section{In general, $\Delta^{\pm 1}$ are polynomials in $\Psi$}\label{section:poly}

We continue to discuss the TD system $\Phi$ from Definition \ref{def:TDsys}.  
Recall the map $\Delta$ from Definition \ref{def:Delta} and the map $\Psi$ from Lemma \ref{lemma:psidef}.  
In Section \ref{section:commute}, we saw that $\Delta,\Psi$ commute.  In this section, we show that $\Delta^{\pm 1}$ are polynomials in $\Psi$ provided that each of $\vartheta_1,\vartheta_2,\ldots,\vartheta_d$ is nonzero.

\medskip

\begin{thm}\label{thm:Deltapoly}
Let $\Delta\in {\rm End}(V)$ be as in Definition \ref{def:Delta} and
let $\Psi\in {\rm End}(V)$ be as in Lemma \ref{lemma:psidef}.  
With reference to Lemma \ref{lemma:vartheta}, assume we are in the situation of {\rm (i), (ii),} or {\rm (iv)} so that the scalars $\{\vartheta_i\}_{i=1}^d$ from Definition \ref{def:vartheta} are nonzero.  
Then both
\begin{eqnarray}
\Delta = I +\frac{\eta_{1}(\theta_0)}{\vartheta_1}\Psi + \frac{\eta_{2}(\theta_0)}{\vartheta_1\vartheta_2}\Psi^2 +\cdots + \frac{\eta_{d}(\theta_0)}{\vartheta_1\vartheta_2 \cdots \vartheta_d} \Psi^d,\label{eq:MT0}\\
\Delta^{-1} = I +\frac{\tau_{1}(\theta_d)}{\vartheta_1}\Psi + \frac{\tau_{2}(\theta_d)}{\vartheta_1\vartheta_2}\Psi^2  +\cdots+ \frac{\tau_{d}(\theta_d)}{\vartheta_1\vartheta_2 \cdots \vartheta_d} \Psi^d.\label{eq:MT00}
\end{eqnarray}
\end{thm}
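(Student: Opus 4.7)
The plan is to verify formula \eqref{eq:MT0} summand-by-summand against the refined decomposition $V=\bigoplus_{i=0}^{r}\bigoplus_{j=i}^{d-i}\tau_{ij}(A)K_i$ of Corollary \ref{cor:Urefine}. On a vector $\tau_{ij}(A)v$ with $v\in K_i$, Lemma \ref{lemma:Delta1} and \eqref{eq:Delta} give $\Delta\tau_{ij}(A)v=\eta_{ij}(A)v$, and Lemma \ref{lemma:tau-to-eta} expands this as
\[
\Delta\tau_{ij}(A)v \;=\; \sum_{h=0}^{j-i}[h,j-i-h,d-i-j]\,\eta_{i,i+h}(\theta_i)\,\tau_{i,j-h}(A)v.
\]
On the other side, iterating \eqref{eq:psi-action} yields $\Psi^{h}\tau_{ij}(A)v=\prod_{k=0}^{h-1}(\vartheta_{j-k}-\vartheta_i)\,\tau_{i,j-h}(A)v$ for $0\le h\le j-i$, and $\Psi^{h}\tau_{ij}(A)v=0$ for $h>j-i$ (since $\Psi K_i=0$). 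Because the vectors $\tau_{i,j-h}(A)v$ for distinct $h$ lie in distinct subspaces $U_{j-h}$ (Lemma \ref{lemma:tauU_i}) and $\tau_{i,j-h}(A)$ is injective on $K_i$ (Lemma \ref{lemma:tauK}), proving \eqref{eq:MT0} reduces to matching coefficients of $\tau_{i,j-h}(A)v$ for $0\le h\le j-i$.

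This coefficient matching is equivalent to the scalar identity
\[
[h,j-i-h,d-i-j]\,\eta_{i,i+h}(\theta_i) \;=\; \frac{\eta_h(\theta_0)}{\vartheta_1\vartheta_2\cdots\vartheta_h}\,\prod_{k=0}^{h-1}(\vartheta_{j-k}-\vartheta_i),
\]
which I would prove as follows. Corollary \ref{cor:prod3} (valid precisely under the hypothesis on $\beta$ here) rewrites the bracket as $\prod_{k=0}^{h-1}(\vartheta_{j-k}-\vartheta_i)/(\vartheta_{d-i-k}-\vartheta_i)$, so the identity collapses to
\[
\prod_{k=0}^{h-1}\frac{\theta_i-\theta_{d-i-k}}{\vartheta_{d-i-k}-\vartheta_i} \;=\; \frac{\eta_h(\theta_0)}{\vartheta_1\vartheta_2\cdots\vartheta_h}.
\]
Lemma \ref{lemma:thetavartheta}, applied to the pairs $(i,d-i-k)$ and $(0,d-k)$ whose index sums both equal $d-k$, rewrites each factor on the left as $(\theta_0-\theta_{d-k})/\vartheta_{d-k}$. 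The palindromic symmetry $\vartheta_{d-k}=\vartheta_{k+1}$ from \eqref{line:varthetasym2} then turns the denominator into $\vartheta_1\vartheta_2\cdots\vartheta_h$, while the numerator is $\eta_h(\theta_0)$ by definition.

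Formula \eqref{eq:MT00} for $\Delta^{-1}$ then follows by applying the just-proved formula to the second inversion $\Phi^{\Downarrow}$. Lemma \ref{lemma:Delta^(-1)} identifies $\Delta^{\Downarrow}$ with $\Delta^{-1}$, Corollary \ref{cor:psis} replaces each $\Psi^{\Downarrow}$ by $\Psi$, a direct computation from Definition \ref{def:vartheta} gives $\vartheta_i^{\Downarrow}=\vartheta_i$, and the relations $\eta_{ij}^{\Downarrow}=\tau_{ij}$ together with $\theta_0^{\Downarrow}=\theta_d$ convert each $\eta_h(\theta_0)$ into $\tau_h(\theta_d)$. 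The main obstacle is the scalar identity in the second paragraph: it chains three essentially independent ingredients (the bracket product formula, the $\theta$-$\vartheta$ proportionality, and the $\vartheta$-palindromy) and they must be invoked in the correct order, but once the bookkeeping is set up the computation is short.
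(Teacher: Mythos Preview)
Your proof is correct and follows essentially the same route as the paper's: both verify \eqref{eq:MT0} on each summand $\tau_{ij}(A)K_i$ of the refined decomposition by expanding $\Delta\tau_{ij}(A)v=\eta_{ij}(A)v$ via \eqref{eq:eta-to-tau}, computing $\Psi^h\tau_{ij}(A)v$ via \eqref{eq:psi-action}, and then matching coefficients using Corollary~\ref{cor:prod3}, Lemma~\ref{lemma:thetavartheta}, and the palindromy \eqref{line:varthetasym2}; \eqref{eq:MT00} is then obtained by passing to $\Phi^{\Downarrow}$ using Lemma~\ref{lemma:Delta^(-1)} and Corollary~\ref{cor:psis}. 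Your writeup is even slightly more explicit about why coefficient matching suffices (linear independence of the $\tau_{i,j-h}(A)v$) and why the terms with $h>j-i$ vanish.
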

\begin{proof}
We first show (\ref{eq:MT0}).  
Recall the decomposition of $V$ from Corollary \ref{cor:Urefine}.  
We show that each side of (\ref{eq:MT0}) agrees on each summand $\tau_{ij}(A)K_i$.
Let $v\in K_i$.
We apply each side of (\ref{eq:MT0}) to the vector $\tau_{ij}(A)v$ and show that the results agree.

We first apply the left-hand side of (\ref{eq:MT0}) to $\tau_{ij}(A)v$.  
By Lemma \ref{lemma:Delta1} and (\ref{eq:eta-to-tau}), 
$\Delta\tau_{ij}(A)v$ is a linear combination of $\{\tau_{i,j-h}(A)v\}_{h=0}^{j-i}$ such that the coefficient of $\tau_{i,j-h}(A)v$ is
\begin{equation}
[h,j-i-h,d-i-j]\eta_{i,i+h}(\theta_i)\label{eq:MT0--1}
\end{equation}
for $0\leq h\leq j-i$.  
We now apply the right-hand side of (\ref{eq:MT0}) to $\tau_{ij}(A)v$.
For the sum on the right-hand side of (\ref{eq:MT0}), the action of each term on $\tau_{ij}(A)v$ is computed using (\ref{eq:psi-action}).  From this computation, one finds that the right-hand side of (\ref{eq:MT0}) applied to $\tau_{ij}(A)v$ is a linear combination of $\{\tau_{i,j-h}(A)v\}_{h=0}^{j-i}$ such that the coefficient of $\tau_{i,j-h}(A)v$ is
\begin{equation}
\frac{\eta_{h}(\theta_0)}{\vartheta_1\vartheta_2\cdots\vartheta_h}\prod_{k=0}^{h-1}\left(\vartheta_{j-k}-\vartheta_i\right)\label{eq:MT0--2}
\end{equation}
for $0\leq h\leq j-i$.  
It remains to show that (\ref{eq:MT0--1}) is equal to (\ref{eq:MT0--2}) for $0\leq h\leq j-i$.  Let $h$ be given.  
By (\ref{eta}) and Corollary \ref{cor:prod3}, the scalar (\ref{eq:MT0--1}) is equal to
\begin{equation}
\prod_{k=0}^{h-1}\frac{\left(\theta_i-\theta_{d-i-k}\right)\left(\vartheta_{j-k}-\vartheta_i\right)}{\vartheta_{d-i-k} -\vartheta_i}.\label{eq:MT0--3}
\end{equation}

By (\ref{eq:eta_j}) and since $\vartheta_{\ell}=\vartheta_{d-\ell+1}$ for $1\leq \ell\leq h$, the scalar (\ref{eq:MT0--2}) is equal to 
\begin{equation}
\prod_{k=0}^{h-1} \frac{\left(\theta_0-\theta_{d-k}\right)\left(\vartheta_{j-k}-\vartheta_i\right)}{\vartheta_{d-k}}. \label{eq:MT0--4}
\end{equation}
By Lemma \ref{lemma:thetavartheta} and since $\vartheta_0=0$,
\begin{align*}
\frac{\theta_i-\theta_{d-i-k}}{\vartheta_{d-i-k}-\vartheta_i}=\frac{\theta_0-\theta_{d-k}}{\vartheta_{d-k}} & \qquad \qquad (0\leq k\leq h-1).
\end{align*}
Using this we find that (\ref{eq:MT0--3}) is equal to (\ref{eq:MT0--4}).  Therefore  
(\ref{eq:MT0--1}) is equal to (\ref{eq:MT0--2}) for $0\leq h\leq j-i$ as desired.  We have shown (\ref{eq:MT0}).

To get (\ref{eq:MT00}), apply (\ref{eq:MT0}) to $\Phi^{\Downarrow}$ and use Corollary \ref{cor:psis} along with the fact that $\vartheta_k^{\Downarrow}=\vartheta_k$ for $1\leq k\leq d$.
\end{proof}

\section{Comments}

We now make a few comments regarding future work related to this paper.\\

The reader may have already noticed that the relation in Lemma \ref{lemma:RPsi} looks like one of the defining relations for the quantum $sl_2$.  In fact, there exists a quantum $sl_2$-module structure here.  
We will treat this topic comprehensively in a future paper.\\

The reader may have also noticed some similarities between $\Delta$ and the switching element $S$ from \cite{switch}.  
In spite of the superficial similarities, we see no connection between $\Delta$ and $S$.\\

We now give some suggestions for further research relating to this paper.

\begin{prob}
With reference to Definition \ref{def:R} and Lemma \ref{lemma:psidef}, what is $L\Psi - \Psi L$?
\end{prob}

\begin{prob}
With reference to Definition \ref{def:R} and Lemma \ref{lemma:psidef}, are $L$ and $\Psi$ related in an interesting way?  How about $L^{\Downarrow}$ and $\Psi$?
\end{prob}

\begin{prob}
With reference to Definition \ref{def:Delta} and Lemma \ref{lemma:psidef}, write $\Psi$ as a polynomial in $\Delta-I$.
\end{prob}

\section{Acknowledgment}

This paper was written while the author was a graduate student at the University of Wisconsin-Madison. The author would like to thank her advisor, Paul Terwilliger, for offering many valuable ideas and suggestions.

\bigskip

\bigskip

\noindent Sarah Bockting-Conrad \hfil\break
\noindent Department of Mathematics \hfil\break
\noindent University of Wisconsin \hfil\break
\noindent 480 Lincoln Drive \hfil\break
\noindent Madison, WI 53706-1388 USA \hfil\break
\noindent email: {\tt bockting@math.wisc.edu }\hfil\break
\end{document}